\numberwithin{equation}{section}
\theoremstyle{plain}
\newtheorem{thm}[equation]{Theorem}
\newtheorem{prop}[equation]{Proposition}
\newtheorem{coro}[equation]{Corollary}
\newtheorem{lem}[equation]{Lemma}
\theoremstyle{definition}
\newtheorem{defi}[equation]{Definition}
\theoremstyle{remark}
\newtheorem{nota}[equation]{Notation}
\newtheorem{rem}[equation]{Remark}
\newtheorem*{ack}{Acknowledgements}
\theoremstyle{definition}
\newtheorem{exa}[equation]{Example}
\newtheorem{exas}[equation]{Examples}
\newcommand{\triqui}{\vartriangleleft}
\newcommand{\troqui}{\vartriangleright}
\renewcommand{\top}{\mathrm{top}}
\newcommand{\tor}{\mathrm{Tor}}
\newcommand*{\colim}{\mathop{\mathrm{colim}}}
\newcommand{\weq}{\overset\sim\longrightarrow}
\newcommand{\iso}{\overset\cong\longrightarrow}
\newcommand{\bpf}{\begin{proof}}
\newcommand{\epf}{\end{proof}}
\newcommand{\bprop}{\begin{prop}}
\newcommand{\eprop}{\end{prop}}
\newcommand{\bthm}{\begin{thm}}
\newcommand{\ethm}{\end{thm}}
\newcommand{\brk}{\begin{rem}}
\newcommand{\erk}{\end{rem}}
\newcommand{\bdefi}{\begin{defi}}
\newcommand{\edefi}{\end{defi}}
\newcommand{\blemma}{\begin{lem}}
\newcommand{\elemma}{\end{lem}}
\newcommand{\bclly}{\begin{coro}}
\newcommand{\eclly}{\end{coro}}
\newcommand{\bnota}{\begin{nota}}
\newcommand{\enota}{\end{nota}}
\newcommand{\be}{\begin{enumerate}}
\newcommand{\ee}{\end{enumerate}}
\newcommand{\topdf}{\texorpdfstring}
\newcommand{\comment}[1]{}
\newcommand{\ab}{\mathfrak{Ab}}
\newcommand{\elli}{\ell^\infty}
\newcommand{\ellp}{\ell^p}
\newcommand{\ellq}{\ell^q}
\newcommand{\lf}{\lfloor}
\newcommand{\rf}{\rfloor}
\newcommand{\Ell}{\mathcal{L}}
\newcommand{\Ellp}{\mathcal{L}^p}
\newcommand{\cA}{\mathcal{A}}
\newcommand{\cB}{\mathcal{B}}
\newcommand{\cC}{\mathcal{C}}
\newcommand{\cE}{\mathcal{E}}
\newcommand{\cK}{\mathcal{K}}
\newcommand{\cF}{\mathcal{F}}
\newcommand{\ran}{\mbox{ran}}
\newcommand{\dom}{\mbox{dom}}
\newcommand{\diag}{\mathrm{diag}}
\newcommand{\emb}{\mathrm{Emb}}
\newcommand{\estar}{\mathcal{E}^*}
\newcommand{\aij}{A_{ij}}
\newcommand{\Gami}{\Gamma^\infty}
\newcommand{\im}{\mbox{Im}}
\newcommand{\mspan}{\mbox{span}}
\newcommand{\hotimes}{\hat{\otimes}}
\newcommand{\sotimes}{\overset{\sim}{\otimes}}
\newcommand{\RR}{\mathbb{R}}
\newcommand{\R}{\RR}
\newcommand{\V}{\mathbb{V}}
\newcommand{\CC}{\mathbb{C}}
\newcommand{\C}{\CC}
\newcommand{\Q}{\mathbb{Q}}
\newcommand{\NN}{\mathbb{N}}
\newcommand{\N}{\NN}
\newcommand{\Z}{\mathbb{Z}}
\newcommand{\bH}{\mathbb{H}}
\newcommand{\supp}{\mbox{supp}}
\newcommand{\cP}{\mathcal{P}}
\newcommand{\fH}{\Gamma}
\newcommand{\fA}{\mathfrak{A}}
\newcommand{\fB}{\mathfrak{B}}
\newcommand{\fS}{\mathfrak{S}}
\title[Cyclic homology, crossed products, and stabilizations]{Cyclic homology, tight crossed products, and small stabilizations}
\author{Guillermo Corti\~nas}
\address{Dep. Matem\'atica-IMAS, FCEyN-UBA\\ Ciudad Universitaria Pab 1\\
1428 Buenos Aires\\ Argentina}
\email{gcorti@dm.uba.ar}\urladdr{http://mate.dm.uba.ar/\~{}gcorti}
\thanks{Research supported by CONICET and
by grants UBACyT W386, PIP 112-200801-00900, MTM2007-64704 (FEDER funds) and by MathAmSud network U11MATH-05. The latter network was partially funded by ANII, Uruguay, and by MINCyT, Argentina.}
\begin{document}

\begin{abstract}
In \verb|arXiv:1212.5901| we associated an algebra $\Gami(\fA)$ to every bornological algebra $\fA$ and an ideal $I_{S(\fA)}\triqui\Gami(\fA)$ to every
symmetric ideal $S\triqui\elli$. We showed that $I_{S(\fA)}$ has $K$-theoretical properties which are similar to those of the usual stabilization with respect to the ideal $J_S\triqui\cB$ of the algebra $\cB$ of bounded operators in Hilbert space which corresponds to $S$ under Calkin's correspondence. In the current article we compute the relative
cyclic homology $HC_*(\Gami(\fA):I_{S(\fA)})$. Using these calculations, and the results of \emph{loc. cit.}, we  
prove that if $\fA$ is a $C^*$-algebra and $c_0$ the symmetric ideal of sequences vanishing at infinity, then $K_*(I_{c_0(\fA)})$ is homotopy invariant, and that if $*\ge 0$, it contains $K^{\top}_*(\fA)$ as a direct summand. This is a weak analogue of the Suslin-Wodzicki theorem (\cite{sw1}) that says that for the ideal $\cK=J_{c_0}$ of compact operators and the $C^*$-algebra tensor product $\fA\sotimes\cK$, we have $K_*(\fA\sotimes\cK)=K^{\top}_*(\fA)$. Similarly, we prove that if $\fA$ is a unital Banach algebra
and $\ell^{\infty-}=\bigcup_{q<\infty}\ell^q$, then $K_*(I_{\ell^{\infty-}(\fA)})$ is invariant under H\"older continuous homotopies, and that for $*\ge 0$ it contains $K^{\top}_*(\fA)$
as a direct summand. These $K$-theoretic results are obtained from cyclic homology computations. We also compute the relative cyclic homology groups $HC_*(\Gami(\fA):I_{S(\fA)})$ in terms of $HC_*(\elli(\fA):S(\fA))$ for general $\fA$ and $S$. For $\fA=\C$ and general $S$, we further compute the latter
groups in terms of algebraic differential forms. We prove that the map $HC_n(\Gami(\C):I_{S(\C)})\to HC_n(\cB:J_S)$ is an isomorphism in many cases. 
\end{abstract}

\maketitle

\section{Introduction} 
Let $\ell^2=\ell^2(\N)$ be the Hilbert space of square-summable
sequences of complex numbers and $\cB=\cB(\ell^2)$ the algebra of
bounded operators. Calkin's theorem in \cite{calk}*{Theorem 1.6}, as restated by Garling in \cite{garling}*{Theorem ~1}, establishes an isomorphism
\[
S\mapsto J_S
\]
between the lattice of proper symmetric ideals of the algebra $\elli$ of bounded sequences and that of proper two-sided ideals of the algebra
$\cB=\cB(\ell^2)$ of bounded operators. In \cite{hl} we introduced a subalgebra $\Gami\subset\cB$ and showed that the above lattices are also isomorphic
to the lattice of proper two-sided ideals of $\Gami$, via the correspondence
\[
S\mapsto I_S=J_S\cap\Gami.
\]

More generally, we associated to each bornological algebra $\fA$, an algebra $\Gami(\fA)$ which contains an ideal $I_{S(\fA)}$ for each symmetric ideal
$S\triqui\elli$. We showed that the algebra $I_{S(\fA)}$ has $K$-theoretical properties which are analogous to those of the usual stabilization with respect to $J_S$, at least when $S$ is one of the following:
\begin{equation}\label{eq:lists}
S\in\{c_0, \ell^{p-},\ellq, \ell^{q+} \quad(p\le\infty, q<\infty)\}.
\end{equation}
Here $c_0$ is the ideal of sequences vanishing at infinity,
$\ellq$ consists of the $q$-summable sequences, and
\[
\ell^{p-}=\bigcup_{r<p}\ell^r,\ \ \ell^{q+}=\bigcap_{s>q}\ell^s.
\]
We proved that for $S$ as in \eqref{eq:lists}, there is a long exact sequence:
\begin{equation}\label{intro:seqkth}
\xymatrix{
KH_{n+1}(I_{S(\fA)})\ar[r]&HC_{n-1}(\Gami(\fA):I_{S(\fA)})\ar[d]\\
KH_{n}(I_{S(\fA)})&K_n(\Gami(\fA):I_{S(\fA)})\ar[l]
}
\end{equation}
If furthermore, $S\neq c_0$, then $KH_*(I_{S(\fA)})=KH_*(I_{\ell^1(\fA)})$. We proved that the functor $KH_*(I_{c_0(\fA)})$ is invariant under arbitrary continuous homotopies of bornological algebras, and that $KH_*(I_{\ell^1(\fA)})$ is invariant under H\"older continuous homotopies. We also showed that if $*\ge 0$ and either $\fA$ is a $C^*$-algebra and  $S=c_0$ or $\fA$ is a local Banach algebra and $S=\ell^1$, then $KH_*(I_{S(\fA)})$ contains $K^{\top}_*(\fA)$ as a direct summand.
In the current article we study the groups $HC_*(\Gami(\fA):I_{S(\fA)})$ for general $S$ and $\fA$. We show for example that if $\fA$ is a $C^*$-algebra
then $I_{c_0(\fA)}$ is $H$-unital and 
\[
HC_*(\Gami(\fA):I_{c_0(\fA)})=0.
\]
It follows from this, excision, and the exact sequence \eqref{intro:seqkth}, that the comparison map
\begin{equation}\label{intro:compa1}
K_*(I_{c_0(\fA)})\to KH_*(I_{c_0(\fA)})
\end{equation}
is an isomorphism. In particular, if $\fA$ is a $C^*$-algebra, then $K_*(I_{c_0(\fA)})$ is homotopy invariant, and if $*\ge 0$, it contains $K^{\top}_*(\fA)$ as a direct
summand. This again shows that $I_{c_0(-)}$ has properties analogous to those of $J_{c_0}=\cK$, the ideal of compact operators. Indeed, the result above is a weak analogue of the Suslin-Wodzicki theorem (Karoubi's conjecture) which says that if $\fA$ is a $C^*$-algebra then $K_*(\fA\sotimes\cK)=K_*^{\top}(\fA)$. We also show that if $\fA$ is a unital Banach algebra then $I_{\ell^{\infty-}(\fA)}$ is $H$-unital and 
\[
HC_*(\Gami(\fA):I_{\ell^{\infty-}(\fA)})=0.
\]
Thus the comparison map
\begin{equation}\label{intro:compa2}
K_*(I_{\ell^{\infty-}(\fA)})\to KH_*(I_{\ell^{\infty-}(\fA)})
\end{equation}
is an isomorphism. Again this is analogous to a similar property of stabilization with respect to $J_{\ell^{\infty-}}=\bigcup_p\Ellp$, the union of all Schatten ideals (see \cite{wodk}*{pp 490},\cite{cot}*{Theorem 8.2.5}).
In \cite{wodk}, M. Wodzicki studied the relative cyclic homology groups $HC_n(\cB:J_S)$. 
For $S$ as in \eqref{eq:lists}, the following integer was computed by Wodzicki in
\cite{wodk}*{Corollary to Theorem 8}
\[
m=m_S=\min\{n:HC_n(\cB:J_S)\ne 0\}.
\]
We prove in Proposition \ref{prop:hcmismo} that
\begin{equation}\label{intro:min}
m=\min\{n:HC_n(\Gami:I_S)\ne 0\},
\end{equation}
and that the natural map is an isomorphism for $n=m$:
\begin{equation}\label{intro:min2}
HC_m(\Gami:I_S)\iso HC_m(\cB:J_S).
\end{equation}
The techniques used in this article to establish the results above about $HC_*(\Gami(\fA):I_{S(\fA)})$ are similar to those used in \cite{wodk} to study the relative cyclic homology of stabilizations by $J_S$. We also obtain more results about the groups $HC_*(\Gami(\fA):I_{S(\fA)})$ using a different technique, which involves a description of $\Gami$ and $I_{S}$ as crossed products, established in \cite{hl}*{Proposition 6.12}. The inverse monoid $\emb$ of all partially defined
injections
\[
\N\supset\dom f\overset{f}{\longrightarrow}\N.
\]
acts on $\elli(\fA)$ by
\begin{equation}\label{intro:action}
f_*(\alpha)_n=\left\{\begin{matrix}\alpha_m& \text{ if }f(m)=n\\ 0&\text{ else.}\end{matrix}\right.
\end{equation}
By definition, an ideal $S\triqui\elli$ is symmetric if the action above maps $S$ to itself. 
Observe that if $A,B\subset\N$
are disjoint then the inclusions $p_A:A\to \N$ and $p_B:B\to \N$ satisfy
\[
(p_{A\cup B})_*=(p_{A})_*+(p_{B})_*
\]
In other words, the action above is \emph{tight} in the sense of Exel \cite{ruy}. 
Thus $\elli(\fA)$ is a module over the ring
\[
\Gamma=\Z[\emb]/\langle p_A+p_B-p_{A\cup B}: A\cap B=\emptyset\rangle
\]
Let $\cP\subset\Gamma$ be the subring generated by all the $p_A$ with $A\subset\N$. Note that $\cP$ is isomorphic to the subring of $\elli(\fA)$ consisting of those
sequences $\alpha:\N\to\Z$ which take finitely many distinct values. In particular \eqref{intro:action} makes $\cP$ into a $\Gamma$-module. Moreover $\elli(\fA)$ is a $\cP$-algebra, and the map
\begin{equation}\label{intro:quism}
HC(\elli(\fA):S(\fA))\to HC((\elli(\fA)/\cP:S(\fA))/\cP) 
\end{equation}
is a quasi-isomorphism (see Example \ref{exa:hhp} and \eqref{map:quisintro}). Furthermore the action of 
$\emb$ on $\elli(\fA)$ extends to a tight action on $HC(\elli(\fA):I_{S(\fA)})$, and we show that 
\begin{equation}\label{intro:hcbh}
HC_*(\Gami(\fA):I_{S(\fA)})=\bH_*(\Gamma/\cP:HC((\elli(\fA):S(\fA))/\cP)).
\end{equation}
Here the hyperhomology groups $\bH_*(\Gamma/\cP,-)$ are the hyperderived functors of the functor
\[
\Gamma-\mathrm{Mod}\to \ab,\ \ M\mapsto H_0(\Gami/\cP,M):=M\otimes_\Gamma\cP.
\]
We show in Proposition \ref{prop:ce=estar} that
\begin{multline}\label{intro:ce=estar}
H_0(\Gamma/\cP,M)=
M_\cE=\\
M/\mspan\{m-f_*(m):m\in M, f\in\emb\text{ such that } \dom f=\N\}.
\end{multline}
It follows from \eqref{intro:quism} and \eqref{intro:hcbh} that there is a first quadrant spectral sequence
\[
E^2_{p,q}=H_p(\Gamma/\cP,HC_q(\elli(\fA):S(\fA)))\Rightarrow HC_{p+q}(\Gami(\fA):I_{S(\fA)}).
\]
In particular
\[
HC_0((\Gami(\fA):I_{S(\fA)})=H_0(\Gamma/\cP:\elli(\fA)/[\elli(\fA):S(\fA)]).
\]
Specializing to $\fA=\C$ and using \eqref{intro:ce=estar} and \cite{kenetal2}*{Theorem 5.12} we obtain  
\begin{equation}\label{intro:agree}
HC_0(\Gami:I_{S})=S_\cE=HC_0(\cB:J_S)
\end{equation}
for every symmetric ideal $S\triqui\elli$. Another application of \eqref{intro:hcbh} is that for $\fA$ commutative the groups $HC_*(\Gami(\fA):I_{S(\fA)})$ carry a natural Hodge decomposition. Indeed, the usual Hodge decomposition of the cyclic chain complex \cite{lod} gives an $\emb$-equivariant direct sum decomposition
\[
HC((\elli(\fA):S(\fA))/\cP)=\bigoplus_{p\ge 0}HC^{(p)}((\elli(\fA):S(\fA))/\cP).
\]
Thus for 
\[
HC^{(p)}(\Gami(\fA):I_{S(\fA)})=\bH(\Gamma/\cP,HC^{(p)}((\elli(\fA):S(\fA))/\cP))
\]
we have  
\begin{equation}\label{intro:hodge}
HC_n(\Gami(\fA):I_{S(\fA)})=\bigoplus_{p=0}^nHC^{(p)}_n(\Gami(\fA):I_{S(\fA)}).
\end{equation}
In Theorem \ref{thm:hcrel} we obtain a description of $HC^{(p)}_n(\Gami:I_{S})$ in terms of differential forms which we shall presently explain. Let
$\Omega_{\elli}$ be the de Rham complex of absolute --i.e. $\Z$-linear-- algebraic differential forms. For $p\ge 0$ consider the subcomplex
\[
(\cF_p(S))^q=\left\{\begin{matrix}S^{p-q+1}\Omega^{q}_{\elli} & p\ge q\\
\Omega^q_{\elli}& q>p.\end{matrix}\right.
\]
We show in Theorem \ref{thm:hcrel} that 
\begin{equation}\label{intro:hcrel}
HC_*^{(p)}(\Gami:I_S)=\bH_{*+p}(\Gamma/\cP,\cF_{(p)}(S)).
\end{equation}
It follows that there is a spectral sequence (Corollary \ref{cor:wodspec})
\[
{}_pE^1_{m,n}=H_n(\Gamma/\cP,S^{m+1}\Omega^{p-m}_{\elli})\Rightarrow
HC_{m+n+p}^{(p)}(\Gami:I_S).
\]
Using this spectral sequence, we obtain (Corollary \ref{cor:milspec})
\[
HC_n^{(n)}(\Gami:I_S)=\left(S\Omega^n_{\elli}/d(S^2\Omega^{n-1}_{\elli})\right)_\cE
\]
for every symmetric ideal $S\triqui\elli$. In the particular cases \eqref{eq:lists} we can say more (see Proposition \ref{prop:compu}).
We show, for example, that if $p\in \Z$, then 
\begin{equation}\label{intro:compu}
HC_n^{(q)}(\Gami:I_{\ell^p})=\left\{\begin{matrix}0 & n<q+p-1\\
(\ell^1\Omega^{q-p}_{\elli}/d(\ell^{p/p+1}\Omega^{q-p}_{\elli}))_\cE & n=q+p-1.\end{matrix}\right.
\end{equation}
In particular, by \eqref{intro:min} and \eqref{intro:min2} we have 
\[
HC_{2p-2}(\cB:\Ell^p)=HC_{2p-2}(\Gami:I_{\ell^p})=HC_{2p-2}^{(p-1)}(\Gami:I_{\ell^p})=\ell^1_\cE.
\]

\goodbreak

The rest of this paper is organized as follows. In Section \ref{sec:prelis} we recall
some material from \cite{hl}, including, in particular, the crossed product decomposition
$I_{S(\fA)}=S(\fA)\#_\cP\Gamma$ (Proposition \ref{prop:cpg}). This crossed product is just 
the tensor product $S(\fA)\otimes_\cP\Gamma$ with multiplication
twisted by the action of $\emb$ on $S(\fA)$
\[
(a\# f)(b\# g)=af_*(b)\# fg.
\]  
In particular 
\[
\Gami(\fA)=I_{\elli(\fA)}=\elli(\fA)\#_\cP\Gamma).
\]
In Section \ref{sec:flat} we show that every two-sided ideal of $\Gami$ is flat (Proposition \ref{prop:nflatgamma}). 
Furthermore, if $S$ is closed under taking square roots of positive elements (e.g. if $S=c_0,\ell^{\infty-}$)
then $I_{S(\fA)}$ is a flat ideal of $\Gami(\fA)$ for every unital Banach algebra $\fA$ (Proposition \ref{prop:flatellia}). Section \ref{sec:flatp}
concerns the algebra $\cP$. We show that $\cP$ is a filtering colimit of separable $\Z$-algebras (Proposition \ref{prop:cpfil})
and that if $k$ is a field then $\cP(k)=\cP\otimes k$ is von Neumann regular (Corollary \ref{cor:cpfil}). Hence if $k$ is a field then
every $\cP(k)$-module is flat. Further, we show that for any unital ring $R$, $\Gamma(R)=\Gamma\otimes R$ is flat as a module over $\cP(R)$
(Proposition \ref{prop:gamaflatp}). The next section concerns excision. We call a ring $A$ $K$-excisive if it satisfies excision in algebraic
$K$-theory. It was proved by Suslin and Wodzicki (\cite{sw1}) that a ring having a certain triple factorization property (\textup{TFP}) is $K$-excisive.
We prove in Proposition \ref{prop:tfp} that if $\fA$ is a bornological algebra and $S\triqui\elli$ is a symmetric ideal such that $S(\fA)$ has the
\textup{TFP}, then $I_{S(\fA)}$ is $K$-excisive. This applies, for example, when $\fA$ is a $C^*$-algebra and $S=c_0$ (Example \ref{exa:excicstar}), and also when $\fA$ is a unital Banach algebra and $S=\ell^{\infty-}$ (Example \ref{exa:exciunital}). Section \ref{sec:homo} is concerned with the homology of crossed
products of the form $R\#_\cP\Gamma$ where $R$ is unital. The identity \eqref{intro:ce=estar} is proved in Proposition \ref{prop:ce=estar}. The quasi-isomorphism
\eqref{intro:quism} follows from the case $k=\Q$ of Example \ref{exa:hhp}, which says that if $k$ is a field, $A$ is a unital $\cP(k)$-algebra, and $N$ is an $A\otimes_{\cP(k)}A^{op}$-module, then the map of Hochschild complexes
\[
HH(A/k,N)\to HH(A/\cP(k),N)
\]
is a quasi-isomorphism. In Proposition \ref{prop:phiso} we compute the Hochschild homology of a crossed product $R\#_\cP\Gamma$ with coefficients
in a bimodule of the form $M\#_\cP\Gamma$. We show that there is a quasi-isomorphism
\[
\bH(\Gamma/\cP,HH(R/\cP(k),M))\weq HH(R\#_\cP\Gamma/\cP(k),M\#_\cP \Gamma).
\]
As an application, we obtain the isomorphism \eqref{intro:agree} in Corollary \ref{coro:hc0}. Using this, the calculations of \cite{wodk} compute $HC_0(\Gami:I_S)$ for $S\in\{\ell^p, \ell^{\pm p}\}$ (Lemma \ref{lem:previo}). Theorem \ref{thm:hccross} shows that if $k$ is a field and $R$ is unital
then there is a quasi-isomorphism
\[
\bH(\Gamma/\cP,HC(R/\cP(k)))\weq HC(R\#_\cP\Gamma/k).
\]
The identity \eqref{intro:hcbh} follows from this (Corollary \ref{coro:hccross}). In the particular case when $R$ is a commutative $\Q$-algebra, we obtain
(in Subsection \ref{subsec:hodge}) a
Hodge decomposition 
\[
HC_n(R\#_\cP\Gamma)=\bigoplus_{p=0}^nHC^{(p)}_n(R\#_\cP\Gamma)=\bigoplus_{p=0}^n\bH_n(\Gamma/\cP:HC^{(p)}(R/\cP)).
\]  
The decomposition \eqref{intro:hodge} follows from this. In Section \ref{sec:wod} we study the groups $HC_*(\Gami(\fA):I_{S(\fA)})$. 
The identities \eqref{intro:min} and \eqref{intro:min2} are proved in Proposition \ref{prop:hcmismo}. Theorem \ref{thm:k=kh} proves that the comparison map 
\eqref{intro:compa1} is an isomorphism when $\fA$ is a $C^*$-algebra and that \eqref{intro:compa2} is an isomorphism when $\fA$ is a unital Banach algebra.
The identity \eqref{intro:hcrel} is proved in Theorem \ref{thm:hcrel}. The latter is deduced from a computation of $HC_*^{(p)}(\elli/S)$ (Theorem \ref{thm:cgg}) which, we think, is of independent interest. The identity \eqref{intro:compu} is included in Proposition \ref{prop:compu}, which considers also the case when $p\notin\Z$ and computes some of the groups $HC_n^{(q)}(\Gami:I_{\ell^{\pm p}})$.

\begin{ack}
This article is part of an ongoing joint research project with Beatriz Abadie. It was originally part of our joint paper \cite{hl}, which we later decided to split into two articles, to facilitate publication. Although she had important contributions to the present article --particularly to Section \ref{sec:flat}-- she insisted in not being included as an author. I am indebted to her as well as to the Universidad de la Rep\'ublica for its hospitality during my many visits to Beatriz to collaborate
in this project over the last five years.    
\end{ack}

\section{Preliminaries}\label{sec:prelis}
\numberwithin{equation}{subsection}

\subsection{Symmetric sequence ideals and the algebra \topdf{$\Gami(\fA)$}{Gami(A)}}
Throughout this paper we work in the setting of bornological spaces
and bornological algebras; a quick introduction to the subject is
given in \cite{cmr}*{Chapter 2}. Recall that a (complete, convex)
bornological vector space over the field $\C$ of complex numbers is
a filtering union $\V=\cup_D\V_D$ of Banach spaces, indexed by the
disks of $\V$, such that the inclusions $\V_D\subset \V_{D'}$ are
bounded.
A subset of $\V$ is \emph{bounded} if it is a
bounded subset of some $\V_D$.
Let $X$ be a nonempty set. A map $X\to V$ is \emph{bounded} if its image is contained in a bounded
subset. We write
$\elli(X,\V)$  for the bornological vector
space of bounded maps $X\to \V$ where $B\subset\elli(X,\V)$ is bounded if
$\bigcup_{b\in B}b(X)$ is. 
The inverse monoid $\emb(X)$ of partially defined embeddings $X\to X$ acts on 
$\elli(X,\V)$ by means of the following action
\[
(f_*(\alpha)_x= \begin{cases}
                    \alpha_{f^{\dagger}(x)}&\text{ if $x\in \ran (f)$ }\\
                         0& \text{otherwise}   .      \end{cases}
\]
When $X=\N$ or $\V=\C$, we omit it from our notation; thus 
$\emb=\emb(\N)$, $\elli(\V)=\elli(\N,\V)$,  $\elli(X)=\elli(X,\C)$ and $\elli=\elli(\N,\C)$.  A subspace $S\triqui\elli$ is called \emph{symmetric} if it is stable under the action of $\emb$. If $S\subset \elli$ is a symmetric
subspace and $\V$ is a bornological vector space, then
\[
S(\V):=\{\alpha\in\elli(\V):(\exists D)\, \alpha(\N)\subset \V_D\text{
and }||\alpha||_D\in S\}
\]
is a symmetric subspace of $\elli(\V)$.

We will often work with sequences indexed by
infinite countable sets other than $\N$. A bijection $u:\N\to X$ gives rise
to a bounded isomorphism $\alpha\mapsto\alpha u$ between $\elli(X,\V)$ and $\ell^\infty(\V)$. If
$S\subset \ell^\infty$ is a symmetric subspace, we define
$S(X,\V)=\{s u^{-1}:s\in S(\V)\}$. Because $S$ is symmetric by assumption, this definition does
not depend on the choice of $u$. 

Recall a bornological algebra is
a bornological vector space $\fA$ with an associative bounded
multiplication. If $\fA$ is a bornological algebra, then pointwise
multiplication makes $\elli(\fA)$ into a bornological algebra, and if $S\triqui\elli$
is a symmetric ideal, then $S(\fA)\triqui\elli(\fA)$ is a symmetric two-sided ideal. 

Let $R$ be a ring and $A:\N\times\N\to R$ a countably infinite square matrix with entries in $R$. For $i,j\in\N$,
consider the following elements of $\Z\cup\{\infty\}$:
\begin{gather*}
r_i(A)=\#\{j: A_{ij}\neq 0\}, c_j(A)=\#\{i: A_{ij}\neq 0\},\\
N(A):=\sup\{r_i(A), c_i(A):i\in\N\}.
\end{gather*}
Let $\fA$ be a bornological algebra, and $S\triqui\elli(\fA)$ an ideal. Following \cite{hl}*{Definition 3.5}, we set
\begin{gather}\label{eq:mis}
I_{S(\fA)}=\{A=(\aij)_{i,j\in \NN}: \{\aij\}\in S(\N\times\N)\mbox{ and } N(A)<\infty\}\\ 
\text{ and }\Gami(\fA)=I_{\elli(\fA)}.\nonumber
\end{gather}

\subsection{Crossed products with \topdf{$\Gamma$}{Gamma}}

Let $R$ be a ring. \emph{Karoubi's cone} of the ring $R$ is the ring
\[
\Gamma(R)=\{A\in M_\N(R): N(A)<\infty \text{ and } \#\{A_{i,j}:(i,j)\in \N\times\N\}<\infty\}.
\]
We also consider the ring of all locally constant sequences
\[
\cP(R)=\{\alpha\in R^\N: \#\{\alpha_n:n\in\N\}<\infty\}.
\]
Observe that $\alpha\in\cP(R)$ if and only if the diagonal matrix $\diag(\alpha)\in\Gamma(R)$. We
shall identify $\cP(R)$ with $\diag(\cP(R))\subset\Gamma(R)$. 
When $R=\Z$ we omit it from our notation; we set
\[
\Gamma=\Gamma(\Z),\ \ \cP=\cP(\Z).
\]
By \cite{biva}*{Lemma 4.7.1} the map 
\begin{equation}\label{map:isotensog}
\phi:\Gamma\otimes R\to \Gamma(R),\ \ \phi(A\otimes x)_{i,j}=A_{i,j}x 
\end{equation}
 is an isomorphism. It
follows from this that $\Gamma$ and $\cP$ are flat $\Z$-modules. By 
\cite{hl}*{Remark 6.8} the restriction of $\phi$ induces an isomorphism 
\begin{equation}\label{map:isotensop}
\cP\otimes R\iso \cP(R).
\end{equation}
There is a monoid homomorphism
\begin{equation}\label{map:U}
U:\emb\to\Gamma,\ \ (U_f)_{i,j}=\left\{\begin{matrix} 1&\text{ if } j\in\dom(f) \text{ and }f(j)=i\\ 0 &\text{ otherwise. }\end{matrix}\right.
\end{equation}
Observe that the idempotent submonoid of $\emb$ is isomorphic to the monoid $2^\N$ of subsets of $\N$ with intersection of subsets as multiplication.
If $p^2=p$ and $A=\im p$, then $U_p=\diag(\chi_A)$ is a diagonal matrix. We will often identify $p$, $U_p$ and $\chi_A$. 
We also consider the monoid rings $\Z[2^\NN]$ and $\Z[\emb]$, and
the two-sided ideals
\begin{gather}
I=\langle\{\chi_{A\sqcup B}-\chi_A-\chi_B:A,B\subset\NN,\ \ A\cap B=\emptyset\}\rangle\triqui \Z[2^\NN],\label{eq:I}\\
J=\langle\{\chi_{A\sqcup B}-\chi_A-\chi_B:A,B\subset\NN,\ \ A\cap
B=\emptyset\}\rangle\triqui \Z[\emb].\label{eq:J}
\end{gather}

The following lemma follows from \cite{hl}*{Lemma 5.4 and Remark 6.8}.

\begin{lem}\label{lem:presfh}
Let $R$ be a ring. The maps \eqref{map:U}, \eqref{map:isotensog} and \eqref{map:isotensop} induce the following isomorphisms:
\item{i)} $\cP(R)=R[2^\NN]/R\otimes I$.
\item{ii)}$\fH(R)=R[\emb]/R\otimes J$.
\end{lem}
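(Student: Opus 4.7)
The plan is to deduce the lemma from the two cited results of \cite{hl}, which handle the case $R=\Z$, by flat base change to $R$. The key ingredients, already available in the excerpt, are: both $\cP$ and $\Gamma$ are flat $\Z$-modules; the maps \eqref{map:isotensog} and \eqref{map:isotensop} identify $\Gamma\otimes R\iso \Gamma(R)$ and $\cP\otimes R\iso \cP(R)$; and $R[2^\NN]$ and $R[\emb]$ are by definition $R\otimes_\Z\Z[2^\NN]$ and $R\otimes_\Z\Z[\emb]$ (well-defined rings because the monoid multiplications commute with scalar multiplication by $R$).

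For part (i), I would first record that \cite{hl}*{Remark 6.8} amounts to the statement that the assignment $\chi_A\mapsto \diag(\chi_A)\in\cP$ extends to a ring isomorphism $\Z[2^\NN]/I\iso \cP$: surjectivity is immediate since any element of $\cP$ is a finite $\Z$-linear combination of characteristic functions of subsets of $\N$; the inclusion $I\subseteq\ker$ is the relation $\chi_{A\sqcup B}=\chi_A+\chi_B$ for disjoint $A,B$; and the opposite inclusion is the content of \emph{loc. cit.} Since $\cP$ is $\Z$-flat, tensoring the short exact sequence
\[
0\to I\to \Z[2^\NN]\to \cP\to 0
\]
with $R$ preserves exactness, giving
\[
0\to R\otimes I\to R[2^\NN]\to R\otimes\cP\to 0.
\]
Composing the quotient with the isomorphism $R\otimes\cP\iso\cP(R)$ of \eqref{map:isotensop} yields $\cP(R)=R[2^\NN]/R\otimes I$.

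For part (ii), \cite{hl}*{Lemma 5.4} provides the analogous statement with $\Z$-coefficients: the monoid homomorphism $U$ of \eqref{map:U}, extended $\Z$-linearly, induces a ring isomorphism $\Z[\emb]/J\iso \Gamma$. That $J$ is killed is once again the orthogonality relation $U_{A\sqcup B}=U_A+U_B=\diag(\chi_A)+\diag(\chi_B)$ for disjoint $A,B$. Flatness of $\Gamma$ over $\Z$ preserves exactness of
\[
0\to J\to\Z[\emb]\to\Gamma\to 0
\]
upon tensoring with $R$, so we obtain $R[\emb]/R\otimes J\cong R\otimes\Gamma$, and \eqref{map:isotensog} identifies the right-hand side with $\Gamma(R)=\fH(R)$.

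No step is genuinely difficult here: the lemma is a formal consequence of the $\Z$-case (already in \cite{hl}) together with flat base change. The only bookkeeping I would double-check is that the composite isomorphisms produced in this way really are induced by the triple of maps \eqref{map:U}, \eqref{map:isotensog}, \eqref{map:isotensop} as asserted, i.e.\ that the map $R[\emb]\to\Gamma(R)$ agrees on generators with $r\otimes f\mapsto r\cdot U_f$, and symmetrically for $R[2^\NN]\to\cP(R)$; this is immediate from the construction of the isomorphisms in \cite{hl}. The closest thing to an obstacle would have been whether $R\otimes I\hookrightarrow R[2^\NN]$ (and similarly for $J$) — i.e.\ whether ``$R\otimes I$'' makes sense as a two-sided ideal of $R[2^\NN]$ — but $\Z$-flatness of $\cP$ and $\Gamma$ ensures injectivity of the relevant base-change maps.
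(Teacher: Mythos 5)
Your proof is correct and matches the paper's intent: the paper itself offers no argument beyond the citation of \cite{hl}*{Lemma 5.4 and Remark 6.8}, which give the $R=\Z$ case, and the passage to general $R$ is exactly the flat base change you describe, using the $\Z$-flatness of $\cP$ and $\Gamma$ (recorded just after \eqref{map:isotensog}) together with the identifications \eqref{map:isotensog} and \eqref{map:isotensop}. Your closing remark about why $R\otimes I\hookrightarrow R[2^\NN]$ and $R\otimes J\hookrightarrow R[\emb]$ are injective is the right point to flag and is handled correctly.
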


\begin{rem}\label{rem:tight}
Given a monoid $M$ and a unital ring $R$, a representation of $M$ in $R$-modules is
the same thing as a module over the monoid algebra $R[M]$. In view of
Lemma \ref{lem:presfh}, the modules over $\cP(R)$ and $\Gamma(R)$
correspond to those representations of the inverse monoids
$2^\N$ and $\emb$ which are tight in the sense of Exel
(see \cite{ruy}*{Def. 13.1 and Prop. 11.9}).
\end{rem}

Because $\emb$ is a monoid, if $\cA$ is a ring on which $\emb$
acts by algebra endomorphisms we can form the \emph{crossed product}
$\cA\#\emb$. As an abelian group, $\cA\#\emb=\cA\otimes_\Z\Z[\emb]$
with multiplication given by
\begin{equation}\label{eq:cpform}
(a\# f)(b\# g)=af_*(b)\# fg.
\end{equation}
Here $\#=\otimes$ and $f_*(b)$ denotes the action of $f$ on $\emb$.
Now assume that the $\emb$-ring $\cA$ is also a $\cP$-algebra,
that is, it is a ring and a $\cP$-bimodule, and these operations
are compatible in the sense that
\[
(ap)b=a(pb)\ \ (a,b\in \cA,\ \ p\in\cP).
\]
Further assume that $\cA$ is central as a $\cP$-bimodule,
i.e. $pa=ap$ ($a\in \cA$, $p\in\cP$), and that
\[
pa=p_*(a)\qquad (p\in 2^\N).
\]
Under all these conditions, we say that $\cA$ is an
\emph{$\emb$-bundle} (cf. \cite{busex}*{Def. 2.10}). For
$J\triqui\Z[\emb]$ as in \eqref{eq:J}, we have
\begin{gather*}
\cA\#\emb\troqui \cA\#J=\mspan\{r\# j:r\in \cA, j\in J\}\mbox{ and }\\
\cA\#\emb\troqui L=\mspan\{rp\#h-r\#ph:r\in \cA, p\in\cP, h\in
\emb\}.
\end{gather*}
Set
\begin{equation}\label{eq:crossdef}
\cA\#_\cP\Gamma=\cA\#\emb/(L+\cA\#J).
\end{equation}
Thus, $\cA\#_\cP\Gamma=\cA\otimes_\cP \Gamma$ as left
$\cP$-modules, and the product is that induced by \eqref{eq:cpform};
we have
\begin{equation}\label{eqprodcp}
(a\# U_f)(b\# U_g)=a f_*(b)\# U_{fg}\in \cA\#_\cP\Gamma.
\end{equation}

\begin{prop}\label{prop:cpg}(\cite{hl}*{Proposition 6.11})
Let $\fA$ be a bornological algebra. The map
\begin{equation}\label{map:cp}
\elli(\fA)\#_\cP\fH\to \Gami(\fA),\quad \alpha\# U_f\mapsto \diag(\alpha) U_f
\end{equation}
is an isomorphism of $\cP$-algebras. If $S\triqui\elli$ is a
symmetric ideal, then \eqref{map:cp} sends $S(\fA)\#_\cP\fH$
isomorphically onto $I_{S(\fA)}\triqui\Gami(\fA)$.
\end{prop}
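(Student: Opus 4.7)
The plan is to establish the result in four steps: well-definedness of the map out of the crossed product, compatibility with the algebra structure, surjectivity, and injectivity. The $\cP$-algebra claim is automatic because the crossed product $\#_\cP\Gamma$ is defined by quotienting precisely by the $\cP$-balance and tightness relations, so I only have to check they are respected by the prescribed formula.

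For well-definedness and the homomorphism property: the tightness relation $U_{\chi_{A\sqcup B}}=U_{\chi_A}+U_{\chi_B}$ for disjoint $A,B$ is immediate from the matrix formula \eqref{map:U}, which handles the generators of $\cA\# J$. The $\cP$-balance relation $\alpha p\# U_h=\alpha\# U_pU_h$ for $p=\chi_A\in\cP$ is handled by the pointwise identity $\diag(\alpha)\diag(\chi_A)=\diag(\alpha p)$. Multiplicativity reduces to $\diag(\alpha)U_f\cdot\diag(\beta)U_g=\diag(\alpha f_*(\beta))U_{fg}$, which I would verify by computing the $(i,j)$-entry on both sides: both vanish unless $j\in\dom(fg)$ and $(fg)(j)=i$, and in that case both equal $\alpha_i\beta_{g(j)}=\alpha_i(f_*(\beta))_i$.

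For surjectivity: given $A\in\Gami(\fA)$, I would view the support $\{(i,j):A_{ij}\neq 0\}$ as the edge set of a bipartite graph on $\N\sqcup\N$. The hypothesis $N(A)<\infty$ says this graph has bounded degree, so by K\"onig's edge-coloring theorem its edges partition into at most $N(A)$ matchings, each of which is precisely the graph of some $f_k\in\emb$. Setting $(\alpha_k)_i:=A_{i,f_k^{-1}(i)}$ for $i\in\ran f_k$ and zero otherwise, the boundedness of $\{A_{ij}\}$ ensures $\alpha_k\in\elli(\fA)$, and by construction $A=\sum_k\diag(\alpha_k)U_{f_k}$. For the ideal statement, if $A\in I_{S(\fA)}$ then $\{A_{ij}\}\in S(\N\times\N,\fA)$, which forces each $\alpha_k\in S(\fA)$, so the decomposition lies in $S(\fA)\#_\cP\Gamma$ and this piece of the proof is essentially free.

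For injectivity: I would put an arbitrary representative $\sum_i\alpha_i\# U_{f_i}$ into a normal form in which the partial injections have pairwise disjoint graphs in $\N\times\N$. Using the tightness relation, for any $B\subseteq\dom(f_i)$ one has $U_{f_i}=U_{f_i|_B}+U_{f_i|_{\dom(f_i)\setminus B}}$ modulo $J$, so I can iteratively subdivide each $f_i$ along the finitely many sets $\{k\in\dom f_i\cap\dom f_j:f_i(k)=f_j(k)\}$ until every two distinct partial injections appearing in the sum have disjoint graphs. After using $\cP$-balance to absorb the characteristic functions of the ranges into the coefficients and combining like terms, the normal form reads $\sum_k\beta_k\# U_{g_k}$ with pairwise disjoint $\mathrm{graph}(g_k)$. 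The image in $\Gami(\fA)$ then has entries that literally reproduce each $\beta_k$ along $\mathrm{graph}(g_k)$, so it vanishes only when every $\beta_k=0$, i.e., when the element was already zero in $\elli(\fA)\#_\cP\Gamma$. The main obstacle is precisely this disjointification: one must check that finitely many subdivisions suffice (which follows from the sum being finite) and that no further collapsing is forced by the crossed-product relations, so that the normal form can be read off unambiguously from the matrix entries.
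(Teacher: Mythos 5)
The paper does not actually contain a proof of this proposition; it is quoted from \cite{hl}*{Proposition 6.11}, so there is no in-paper argument to compare against. Your proposal is a reasonable and essentially correct reconstruction, with two points that need tightening.

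For surjectivity: K\"onig's edge-colouring theorem as usually stated is for finite bipartite graphs; for the countably infinite, bounded-degree graph you are using you need the compactness extension (de Bruijn--Erd\H{o}s or Rado's selection lemma), which is worth citing. Also, the ideal case is not quite ``free'': to see that $\alpha_k\in S(\fA)$ you must invoke the $\emb$-invariance of $S$, since $\alpha_k$ is obtained from $\{A_{ij}\}\in S(\N\times\N,\fA)$ by composing with the injection $\ran f_k\hookrightarrow\N\times\N$, $i\mapsto(i,f_k^{-1}(i))$, extending to a bijection, and multiplying by $p_{\ran f_k}$; each step preserves $S$ precisely because $S$ is symmetric.

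For injectivity: iteratively subdividing one $f_i$ at a time along the pairwise agreement sets $\{k:f_i(k)=f_j(k)\}$ does not obviously terminate at a configuration with pairwise-disjoint graphs, since resolving one pair can reintroduce overlaps with a third. The clean fix is a single simultaneous refinement: for each nonempty $T\subset\{1,\dots,m\}$ let $h_T\in\emb$ be the partial injection with $\mathrm{graph}(h_T)=\bigcap_{i\in T}\mathrm{graph}(f_i)\setminus\bigcup_{j\notin T}\mathrm{graph}(f_j)$. These graphs are pairwise disjoint by construction and $\dom f_i=\bigsqcup_{T\ni i}\dom h_T$, so tightness gives $U_{f_i}=\sum_{T\ni i}U_{h_T}$ and
\[
\sum_i\alpha_i\# U_{f_i}=\sum_{T}\Bigl(\sum_{i\in T}\alpha_i\Bigr)p_{\ran h_T}\# U_{h_T}.
\]
With this normal form, the matrix of the image has the coefficient $\bigl(\sum_{i\in T}\alpha_i\bigr)_{\ran h_T}$ literally along $\mathrm{graph}(h_T)$, and vanishing of the matrix forces each $\bigl(\sum_{i\in T}\alpha_i\bigr)p_{\ran h_T}=0$, hence each summand is zero in the crossed product. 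Your read-off argument then goes through; the point to note is that one concludes $\beta_k p_{\ran g_k}=0$, not $\beta_k=0$ outright, but after absorbing $p_{\ran g_k}$ into the coefficient these coincide.
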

\section{Flat ideals of \topdf{$\Gami$}{Gami} and \topdf{$\elli$}{elli}}\label{sec:flat}
\numberwithin{equation}{section}
\begin{prop}\label{prop:ellippal}
Every finitely generated ideal of $\elli$ is principal and
projective.
\end{prop}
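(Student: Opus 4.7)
The plan is to first reduce every finitely generated ideal to a principal one, and then to show that every principal ideal of $\elli$ is projective by identifying its annihilator as generated by an idempotent.

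For the first step, given $a_1,\dots,a_n\in\elli$, I would set $b:=\sum_{i=1}^n|a_i|$, which lies in $\elli$ since $b_k\le\sum_i\|a_i\|_\infty$. To prove $(a_1,\dots,a_n)=(b)$ I need both inclusions. For $(b)\subset(a_1,\dots,a_n)$, I show each $|a_i|$ is already in $(a_i)$: define $d_i\in\elli$ termwise by $d_{i,k}=\overline{a_{i,k}}/|a_{i,k}|$ when $a_{i,k}\ne 0$ and $d_{i,k}=0$ otherwise; then $|d_{i,k}|\le 1$, so $d_i\in\elli$, and $d_i a_i=|a_i|$. For the reverse inclusion, put $c_{i,k}=a_{i,k}/b_k$ when $b_k\ne 0$ and $0$ otherwise; the inequality $b_k\ge|a_{i,k}|$ gives $|c_{i,k}|\le 1$, so $c_i\in\elli$, and $c_i b=a_i$ because $b_k=0$ forces $a_{i,k}=0$.

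For the second step, let $(b)\triqui\elli$ be any principal ideal and consider the surjection $\elli\twoheadrightarrow(b)$, $c\mapsto cb$. Its kernel is $\mathrm{Ann}(b)=\{c\in\elli:c_k=0\text{ whenever }b_k\ne 0\}$. Letting $e:=\chi_{\{k:b_k=0\}}$, this is an idempotent of $\elli$ and $\mathrm{Ann}(b)=e\elli$. Hence the short exact sequence $0\to e\elli\to \elli\to (b)\to 0$ splits via the idempotent decomposition $\elli=e\elli\oplus(1-e)\elli$, so $(b)\cong(1-e)\elli$ is a direct summand of $\elli$, in particular projective.

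There isn't a serious obstacle here: the argument relies only on two elementary facts about $\C$-valued bounded sequences, namely that $|a_i|\in(a_i)$ (which uses complex conjugation and the fact that the phase sequence $\overline{a_{i,k}}/|a_{i,k}|$ is bounded by $1$), and that characteristic functions of arbitrary subsets of $\N$ belong to $\elli$ and are idempotent. Both steps take place pointwise on $\N$, and the resulting sequences are automatically bounded, which is the only thing to check to remain inside $\elli$.
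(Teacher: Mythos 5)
Your proof is correct, and it differs from the paper's in an instructive way on the projectivity half. For principality, both arguments build a single generator from the absolute values of the given generators: the paper takes $\mu(n)=\max\{|\alpha_0(n)|,|\alpha_1(n)|\}$ and handles two generators at a time (then induction), while you take $b=\sum_i|a_i|$ and treat all $n$ at once; this is a cosmetic difference, since the only facts used are that the phase and the pointwise ratio sequences are bounded by $1$. For projectivity, however, the paper simply cites \cite{hmod}*{Corollary 2.4} (a result about finitely generated ideals in finite von Neumann algebras), whereas you give a self-contained elementary argument: once the ideal is principal with generator $b$, the surjection $\elli\to(b)$, $c\mapsto cb$, has kernel $\mathrm{Ann}(b)=e\elli$ for the idempotent $e=\chi_{\{k:\,b_k=0\}}$, so $(b)\cong(1-e)\elli$ is a direct summand of $\elli$. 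This is cleaner and avoids the external von Neumann algebra machinery; the only thing it exploits that the general von Neumann setting does not is the fact that for a bounded scalar sequence $b$ the support set $\{k:b_k\ne 0\}$ has a bounded characteristic function, which is exactly the point that makes $\elli$ so tractable here. Both routes give exactly the stated conclusion.
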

\bpf The fact that the finitely generated ideals of $\elli$ are
projective follows from \cite{hmod}*{Corollary 2.4}. We will prove
that they are principal. Given $\alpha\in\elli$, set
\begin{equation}\label{eq:nualfa}
 \nu_{\alpha}(n)=
\begin{cases}
0, &\text{if $\alpha(n)=0$}\\
\frac{\alpha(n)}{|\alpha(n)|}, &\text{otherwise.}
\end{cases}
\end{equation}
Notice that
$\nu_{\alpha}$ is the partial isometry in the polar decomposition of
$\alpha$. In fact, we have
\[\alpha=  \nu_{\alpha}|\alpha|,\quad |\alpha|=  \overline{\nu}_{\alpha}\alpha.\]
It follows that, for any ideal $I$ in $\ell^{\infty}$, $\alpha\in I$ if and only if $|\alpha|\in I$.
Now let $I$ be an ideal of $\elli$ generated by $\{\alpha_0,\alpha_1\}$,
and set
\[\mu(n)=\max\{|\alpha_0(n)|,|\alpha_1(n)|\}. \]
For $i=0,1$, let
\[
\gamma_i(n)=\left\{\begin{matrix} 1/2 & \text{ if } |\alpha_0(n)|=|\alpha_1(n)|\\
                                   1  & \text{ if } |\alpha_i(n)|>|\alpha_{1-i}(n)|\\
                                   0 & \text{ otherwise.}\end{matrix}\right.
\]
We have $\mu=\gamma_0|\alpha_0|+\gamma_1|\alpha_1|$;
thus $\mu\in I$. Now set
\[\tau_i(n)=
\begin{cases}
0 & \text{ if } \mu(n)=0\\
\frac{\alpha_i(n)}{\mu(n)} &\text{ otherwise.}
\end{cases}\]
Then $\alpha_i=\tau_i\mu$, $(i=0,1)$. Notice that $\tau_i\in
\elli$, since $|\tau_i(n)|\leq 1$ for all $n\in\NN, i=0,1$.
Therefore, $\mu$ generates $I$. The general case can now be proven by
induction on the number of generators. \epf

\begin{coro}\label{coro:flatelli}
Every ideal of $\elli$ is flat.
\end{coro}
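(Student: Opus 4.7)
The plan is to deduce flatness of an arbitrary ideal from Proposition \ref{prop:ellippal} by a standard filtered colimit argument. Recall that a filtered colimit of flat modules is flat (Lazard's theorem, or a direct check via the preservation of exactness of tensor product under filtered colimits), and that projective modules are flat. So if I can exhibit every ideal of $\elli$ as a filtered colimit of projective $\elli$-modules, I am done.

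First I would observe that any ideal $I\triqui\elli$ is the directed union of the family $\{I_F\}_F$, where $F$ ranges over finite subsets of $I$ and $I_F$ is the ideal generated by $F$. This is a filtered poset under inclusion (the union of two finite sets is finite), and $I=\colim_F I_F$ as $\elli$-modules, with the structure maps being the inclusions $I_F\hookrightarrow I_{F'}$ for $F\subset F'$.

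By Proposition \ref{prop:ellippal}, each $I_F$ is a finitely generated ideal of $\elli$, hence principal and projective, and in particular flat. Since the class of flat modules is closed under filtered colimits, $I=\colim_F I_F$ is flat as an $\elli$-module.

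There is no real obstacle here; the only content lies in Proposition \ref{prop:ellippal}, which has already been established. The argument works in any ring with the property that finitely generated ideals are projective, so the corollary is essentially formal once the proposition is in hand.
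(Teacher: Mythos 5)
Your proof is correct and is exactly the standard argument the paper implicitly relies on (the corollary is stated without proof): an arbitrary ideal is the filtered union of its finitely generated subideals, each of which is projective hence flat by Proposition \ref{prop:ellippal}, and flatness is preserved under filtered colimits.
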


\begin{prop}\label{prop:flatsa}
Let $\fA$ be a unital Banach algebra and $S\triqui\elli$ a symmetric
ideal. Assume that
\[
\alpha\in S\Rightarrow \sqrt{|\alpha|}\in S.
\]
Then $S(\fA)\triqui\elli(\fA)$ is flat both as a
right and as a left $\elli(\fA)$-module.
\end{prop}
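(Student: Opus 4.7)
My plan is to realise $S(\fA)$ as a filtered union of principal two-sided ideals $\mu\elli(\fA)$, where $\mu$ ranges over the non-negative elements of $S$, to show that each such principal ideal is projective as a left $\elli(\fA)$-module, and to conclude flatness from the closure of flat modules under filtered colimits. Left-right symmetry will be automatic: since scalar sequences in $\elli$ lie in the centre of $\elli(\fA)$ one has $\mu\elli(\fA)=\elli(\fA)\mu$, so flatness on one side will force flatness on the other.

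For the projectivity step, I will fix $\mu\in S$ with $\mu\ge 0$ and regard it as the central element $\mu\cdot 1_\fA\in\elli(\fA)$. Setting $e_\mu=\chi_{\{n:\mu(n)\ne 0\}}\in\elli$, I note that $e_\mu$ is a central idempotent, so the decomposition $1=e_\mu+(1-e_\mu)$ gives $\elli(\fA)=e_\mu\elli(\fA)\oplus(1-e_\mu)\elli(\fA)$, exhibiting $e_\mu\elli(\fA)$ as a projective left $\elli(\fA)$-module. Next I will show that the $\elli(\fA)$-linear map $e_\mu\elli(\fA)\to\mu\elli(\fA)$, $r\mapsto\mu r$, is a bijection: it is injective because $\mu(n)$ is an invertible complex scalar on its support (so $\mu r=0$ combined with $r=e_\mu r$ forces $r=0$), and surjective because $\mu s=\mu(e_\mu s)$ for every $s\in\elli(\fA)$. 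Consequently $\mu\elli(\fA)$ is itself projective.

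The colimit step is where the square-root hypothesis enters. I claim $S(\fA)=\bigcup_{\mu}\mu\elli(\fA)$, with the union taken over non-negative $\mu\in S$ and directed by the pointwise order. Given $\alpha\in S(\fA)$, the sequence $\|\alpha\|_\fA$ lies in $S$, so by hypothesis $\mu:=\sqrt{\|\alpha\|_\fA}\in S$ as well; then $\tilde\alpha(n):=\alpha(n)/\mu(n)$ (set to $0$ when $\mu(n)=0$) satisfies $\|\tilde\alpha(n)\|_\fA=\mu(n)$, so $\tilde\alpha\in\elli(\fA)$ (and in fact $\tilde\alpha\in S(\fA)$, a useful triple-factorisation-type refinement), and $\alpha=\mu\tilde\alpha\in\mu\elli(\fA)$. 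The reverse inclusion follows from the solidity of $S$, and directedness from $\mu_1\elli(\fA)\cup\mu_2\elli(\fA)\subseteq(\mu_1+\mu_2)\elli(\fA)$.

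Since flat modules are closed under filtered colimits, I will conclude that $S(\fA)$ is flat over $\elli(\fA)$ on the left; the right-hand case follows by centrality of $\elli$ in $\elli(\fA)$. The main obstacle I foresee is the bijectivity of the multiplication-by-$\mu$ map in the projectivity step---it depends on $\mu(n)$ being an invertible scalar on its support, which is exactly what the unitality of the Banach algebra $\fA$ provides, by allowing the identification of $\mu\in\elli$ with the central element $\mu\cdot 1_\fA\in\elli(\fA)$ in the first place.
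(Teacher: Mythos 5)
Your proof is correct, and it takes a genuinely different route from the paper's. The paper establishes the $\elli(\fA)$-module isomorphism $\elli(\fA)\otimes_{\elli}S\iso S(\fA)$ (surjectivity via the polar decomposition, injectivity via Proposition \ref{prop:ellippal} and the square-root hypothesis), and then deduces flatness of $S(\fA)$ over $\elli(\fA)$ by base change from flatness of $S$ over $\elli$. You instead decompose $S(\fA)$ directly as a filtered union of principal ideals $\mu\elli(\fA)$ with $0\le\mu\in S$, show each such ideal is isomorphic via multiplication by the partial inverse of $\mu$ to the summand $e_\mu\elli(\fA)$ cut out by the central idempotent $e_\mu=\chi_{\supp\mu}$ (hence projective), and invoke closure of flat modules under filtered colimits. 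Your argument works entirely inside $\elli(\fA)$ and does not use Proposition \ref{prop:ellippal} or Corollary \ref{coro:flatelli}, so it is more self-contained for this statement; the paper's version, however, yields the explicit tensor identity $S(\fA)\cong\elli(\fA)\otimes_{\elli}S$, which is reused verbatim in the proof of Proposition \ref{prop:flatellia}.

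Two small remarks. First, your invocation of the square-root hypothesis is actually unnecessary: to show $\alpha\in S(\fA)$ lies in some $\mu\elli(\fA)$ you could simply take $\mu=\|\alpha\|_\fA\in S$ and $\tilde\alpha(n)=\alpha(n)/\|\alpha(n)\|$ (set to $0$ off the support), which lies in $\elli(\fA)$ with $\alpha=\mu\tilde\alpha$; choosing $\mu=\sqrt{\|\alpha\|_\fA}$ buys the extra conclusion $\tilde\alpha\in S(\fA)$, but that is not needed here. Second, the appeal to centrality of $\elli$ to get flatness ``on the other side'' is a little terse; what is really going on is that your whole argument is manifestly left-right symmetric (the principal ideals, the idempotent summands, and the factorization are all two-sided), so one simply reruns the same words with right modules in place of left. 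Neither remark affects correctness.
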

\begin{proof}
Consider the following homomorphism of $\elli(\fA)$-modules
\[
\mu:\elli(\fA)\otimes_{\elli} S\to S(\fA),\ \
\mu(\alpha\otimes\beta)_n=\alpha_n\beta_n.
\]
We claim that $\mu$ is an isomorphism. To prove it is surjective,
for $\alpha\in S(\fA)$ let $\nu_{\alpha}$ be as in \eqref{eq:nualfa}.
Then $\nu_\alpha\in\elli(\fA)$ and
\[
\alpha=\mu(\nu_\alpha\otimes||\alpha||).\
\]
Thus $\mu$ is surjective. To prove it is also injective, let
\[
\eta=\sum_{i=1}^n\alpha^i\otimes \beta^i\in\ker\mu.
\]
By Proposition \ref{prop:ellippal}, the ideal
$\langle\beta^1,\dots,\beta^n\rangle\triqui\elli$ is principal. Let
$\beta$ be a generator; we may and do choose it so that
$\beta=|\beta|$. By bilinearity, we may rewrite $\eta$ as a single
elementary tensor and we have
\[
\eta=\alpha\otimes\beta,\ \ \alpha\beta=0.
\]
But $\alpha\beta=0$ implies $\alpha\sqrt{\beta}=0$, whence
\[
\eta=\alpha\sqrt{\beta}\otimes\sqrt{\beta}=0.
\]
Thus the claim is proved. It follows that $S(\fA)$ is flat as a left
$\elli(\fA)$-module, since it is the scalar extension of $S$, which
is a flat $\elli$-module by Corollary \ref{coro:flatelli}. The proof
that $S(\fA)$ is flat on the right is similar.
\end{proof}

\begin{exas}\label{exas:root}
The hypothesis of Proposition \ref{prop:flatsa} are satisfied, for
example, when $S$ is either of $\ell^{\infty-}$, $c_0$.
\end{exas}

\begin{prop}\label{prop:nflatgamma}
Every two-sided ideal of $\Gami$ is flat both as a left and as a
right $\Gami$-module.
\end{prop}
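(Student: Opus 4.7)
The plan is to reduce flatness of a two-sided ideal of $\Gami$ to flatness of a symmetric ideal of $\elli$, using the crossed-product description from Proposition \ref{prop:cpg} together with Corollary \ref{coro:flatelli}. By the lattice isomorphism between proper two-sided ideals of $\Gami$ and proper symmetric ideals of $\elli$ recalled in the introduction (established in \cite{hl}), it suffices to show that $I_S$ is flat on both sides for every symmetric ideal $S\triqui\elli$.

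To exploit this, I would produce a natural right $\Gami$-module isomorphism
\[
S\otimes_\elli\Gami\iso I_S,\qquad s\otimes g\mapsto\diag(s)\,g,
\]
together with its left-handed counterpart $\Gami\otimes_\elli S\cong I_S$. Surjectivity is immediate from Proposition \ref{prop:cpg}: every element of $I_S=S\#_\cP\Gamma$ has the form $\sum_i s_i\# U_{f_i}=\sum_i\diag(s_i)(1\# U_{f_i})$, so $I_S=\diag(S)\cdot\Gami$, and the symmetry of $S$ (combined with commutativity of $\elli$) yields $\Gami\cdot\diag(S)\subset\diag(S)\cdot\Gami$ via the identity $(\alpha\# U_f)(s\# e)=\alpha f_*(s)\# U_f=\diag(f_*(s))(\alpha\# U_f)$. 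Injectivity follows from associativity of tensor products:
\[
S\otimes_\elli\Gami=S\otimes_\elli(\elli\otimes_\cP\Gamma)=(S\otimes_\elli\elli)\otimes_\cP\Gamma=S\otimes_\cP\Gamma=I_S,
\]
where the last equality is the underlying additive identity of the crossed product. Compatibility with the right $\Gami$-action is then a line of bookkeeping on simple tensors, using that $\gamma_*(\alpha)\in\elli$ can be absorbed across $\otimes_\elli$.

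With these identifications in hand, flatness becomes formal. For any left $\Gami$-module $M$, viewed as a left $\elli$-module by restriction of scalars,
\[
I_S\otimes_{\Gami}M\cong(S\otimes_\elli\Gami)\otimes_{\Gami}M\cong S\otimes_\elli M.
\]
Thus the functor $I_S\otimes_{\Gami}(-)$ is the composition of restriction of scalars (exact) with $S\otimes_\elli(-)$, and the latter is exact by Corollary \ref{coro:flatelli}. Hence $I_S$ is flat as a right $\Gami$-module; the left-flatness argument is entirely symmetric via $I_S\cong\Gami\otimes_\elli S$.

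The main technical nuisance I expect is the bookkeeping with the twisted multiplication in $\elli\#_\cP\Gamma$: one has to verify that the right $\Gami$-module structure on $S\otimes_\elli\Gami$ induced by right multiplication on the second factor matches the one inherited by $I_S$ from the inclusion $I_S\subset\Gami$. This is a calculation rather than a conceptual obstacle, but it requires some care because the tensor decomposition $\Gami=\elli\otimes_\cP\Gamma$ is only linear, while the right $\Gami$-action on $I_S$ involves the full twisted product.
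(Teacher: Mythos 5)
Your argument is essentially the paper's: both reduce to the case $I=I_S$ via the lattice isomorphism of \cite{hl}, then identify $I_S\cong S\otimes_\elli\Gami$ as right $\Gami$-modules (via $S\otimes_\cP\Gamma=S\otimes_\elli\elli\otimes_\cP\Gamma$) and deduce right flatness from Corollary \ref{coro:flatelli}. The one place where you and the paper genuinely diverge is left flatness: you propose the mirror identification $\Gami\otimes_\elli S\cong I_S$, which works but requires a little extra care (the right $\elli$-action on $\Gami$ is twisted, so one needs the absorption $\Gami\cdot\diag(S)\subset\diag(S)\cdot\Gami$ and symmetry of $S$ to set up the isomorphism and its surjectivity), whereas the paper simply invokes that $\Gami$ is a $*$-algebra, so the involution gives an anti-automorphism carrying $I_S$ to itself and left flatness follows immediately from right flatness. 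Your route is fine and self-contained; the paper's is a one-line shortcut that avoids the twisted-module bookkeeping you flag as the main nuisance.
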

\bpf Let $I\triqui\Gami$. By \cite{hl}*{Theorem 4.5} there is a
symmetric ideal $S$ such that $I=I_S$. Observe that
\[
I_S=S\otimes_\cP\Gamma=S\otimes_{\elli}\elli\otimes_{\cP}\Gamma=S\otimes_{\elli}\Gami.
\]
Thus $I_S\otimes_{\Gami}=S\otimes_{\elli}$ is exact by Corollary
\ref{coro:flatelli}. Hence $I$ is flat as a right module and
therefore also as a left module, since $\Gami$ is a $*$-algebra.
\epf
\begin{rem}\label{rem:minfiproye}
By \cite{hl}*{Proposition 4.6}, if $k$ is a field, then
$M_\infty k$ is the only proper two-sided ideal of $\Gamma(k)$.
Observe that $M_\infty k$ is projective both as a left and as a
right module, since it is isomorphic to an infinite sum of copies of
the principal ideal generated by the idempotent $E_{1,1}$.
\end{rem}

\begin{prop}\label{prop:flatellia}
Let $\fA$ be a unital Banach algebra and $S\triqui\elli$ a symmetric
ideal as in Proposition \ref{prop:flatsa}. Then $I_{S(\fA)}$ is flat
both as a left and as a right $\Gami(\fA)$-module.
\end{prop}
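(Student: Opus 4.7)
The plan is to mimic the proof of Proposition \ref{prop:nflatgamma} verbatim, with Proposition \ref{prop:flatsa} taking the place of Corollary \ref{coro:flatelli}. The square-root hypothesis on $S$, inherited from the assumption on $S$, is precisely what makes Proposition \ref{prop:flatsa} available here.

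By Proposition \ref{prop:cpg} we have $I_{S(\fA)} \cong S(\fA)\#_\cP\Gamma$ and $\Gami(\fA) \cong \elli(\fA)\#_\cP\Gamma$, and by the very definition of $\#_\cP$ the underlying left $\cP$-modules are $S(\fA)\otimes_\cP\Gamma$ and $\elli(\fA)\otimes_\cP\Gamma$ respectively. The standard base-change identity, applied to $M=S(\fA)$ (a right $\elli(\fA)$-module, being an ideal) and $N=\Gamma$ (a left $\cP$-module), then yields
\[
I_{S(\fA)} \;\cong\; S(\fA)\otimes_\cP\Gamma \;\cong\; S(\fA)\otimes_{\elli(\fA)}\bigl(\elli(\fA)\otimes_\cP\Gamma\bigr) \;\cong\; S(\fA)\otimes_{\elli(\fA)}\Gami(\fA).
\]
Inspection of the product rule \eqref{eqprodcp} shows this is an isomorphism of right $\Gami(\fA)$-modules, the action on the right-hand side being by right multiplication on the second factor.

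Granting this identification, tensoring with $I_{S(\fA)}$ over $\Gami(\fA)$ from the right factors as restriction of scalars along the unital ring embedding $\elli(\fA)\hookrightarrow\Gami(\fA)$ (as diagonal matrices, cf.\ Proposition \ref{prop:cpg}), followed by the functor $S(\fA)\otimes_{\elli(\fA)}-$. The first is exact, and the second is exact by Proposition \ref{prop:flatsa}; hence $I_{S(\fA)}$ is flat as a right $\Gami(\fA)$-module. The left-module statement is entirely symmetric, invoking the left-flatness half of Proposition \ref{prop:flatsa}.

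The one place requiring care — which I expect to be the mildest obstacle — is verifying that the base-change isomorphism is compatible with the $\Gami(\fA)$-actions on both ends. This is an elementary calculation from the explicit crossed-product multiplication \eqref{eqprodcp}, since on each side the relevant $\Gami(\fA)$-action ignores the $S(\fA)$-factor and operates only on the $\Gamma$-factor.
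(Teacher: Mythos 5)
Your proof is correct and takes essentially the same route as the paper: both mimic the argument of Proposition \ref{prop:nflatgamma} via the crossed-product decomposition of Proposition \ref{prop:cpg}. The only cosmetic difference is that the paper factors one step further, writing $I_{S(\fA)} = S\otimes_{\elli}\elli(\fA)\otimes_\cP\Gamma = S\otimes_{\elli}\Gami(\fA)$ (using the isomorphism $S(\fA)\cong\elli(\fA)\otimes_{\elli}S$ established in the proof of Proposition \ref{prop:flatsa}) and invokes flatness of $S$ over $\elli$ (Corollary \ref{coro:flatelli}), whereas you stop at $S(\fA)\otimes_{\elli(\fA)}\Gami(\fA)$ and cite Proposition \ref{prop:flatsa} directly.
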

\begin{proof}
By Proposition \ref{prop:cpg} and the proof of Proposition
\ref{prop:flatsa} we have the following canonical isomorphisms of
right $\Gami(\fA)$-modules
\[
I_{S(\fA)}=S(\fA)\otimes_{\cP}\Gamma= S\otimes_{\elli}
\elli(\fA)\otimes_{\cP}\Gamma=S\otimes_{\elli}\Gami(\fA).
\]
This, together with Corollary \ref{coro:flatelli}, proves that
$I_{S(\fA)}$ is flat as a right $\Gami(\fA)$-module. The proof that
it is also flat on the left is similar.
\end{proof}

\section{Flatness properties of \topdf{$\cP$}{cP}}\label{sec:flatp}

Let $k$ be a commutative ring. Recall that a $k$-algebra $A$ which is projective as an $A\otimes_k
A^{op}$-module is called \emph{separable}.
\begin{prop}\label{prop:cpfil}
The $k$-algebra $\cP(k)$ is a filtering union of separable algebras.
\end{prop}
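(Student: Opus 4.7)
The plan is to exhibit $\cP(k)$ explicitly as a filtering union of finite products $k^n$, each of which is separable over $k$ by the standard idempotent argument.

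First I would describe $\cP(k)$ concretely. By \eqref{map:isotensop}, $\cP(k)=\cP\otimes k$, and by Lemma \ref{lem:presfh}(i) this is $k[2^\N]/(k\otimes I)$; equivalently, $\cP(k)$ may be identified with the subring of $k^\N$ consisting of sequences taking finitely many values in $k$. In either description, $\cP(k)$ is the $k$-span of the characteristic functions $\chi_A$ ($A\subseteq\N$) subject to $\chi_{A\sqcup B}=\chi_A+\chi_B$ whenever $A\cap B=\emptyset$, and the multiplication is $\chi_A\chi_B=\chi_{A\cap B}$.

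For each finite partition $\mathcal{F}=\{A_1,\dots,A_n\}$ of $\N$ into nonempty subsets, I would set
\[
\cP_{\mathcal{F}}(k):=k\chi_{A_1}\oplus\cdots\oplus k\chi_{A_n}\subset \cP(k).
\]
The $\chi_{A_i}$ are pairwise orthogonal idempotents summing to $\chi_\N=1$, so $\cP_{\mathcal{F}}(k)$ is a unital $k$-subalgebra isomorphic to $k^n$. Such a product algebra is separable over $k$: the element
\[
e_{\mathcal{F}}=\sum_{i=1}^n\chi_{A_i}\otimes\chi_{A_i}\in \cP_{\mathcal{F}}(k)\otimes_k \cP_{\mathcal{F}}(k)^{op}
\]
is a separability idempotent, because for any $a=\sum_j a_j\chi_{A_j}$ with $a_j\in k$ one has $(a\otimes 1)e_{\mathcal{F}}=\sum_j a_j\chi_{A_j}\otimes\chi_{A_j}=(1\otimes a)e_{\mathcal{F}}$, while the multiplication map sends $e_{\mathcal{F}}$ to $\sum_i\chi_{A_i}=1$.

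Finally, I would check that $\{\cP_{\mathcal{F}}(k)\}_{\mathcal{F}}$ is a filtering family of unital subalgebras whose union is all of $\cP(k)$. Given two finite partitions $\mathcal{F},\mathcal{G}$, their common refinement $\mathcal{F}\vee\mathcal{G}=\{A\cap B\neq\emptyset:A\in\mathcal{F},B\in\mathcal{G}\}$ is a finite partition, and since $\chi_A=\sum_{B\in\mathcal{G}}\chi_{A\cap B}$ one has $\cP_{\mathcal{F}}(k),\cP_{\mathcal{G}}(k)\subset \cP_{\mathcal{F}\vee\mathcal{G}}(k)$, so the family is directed. Conversely, any $\alpha\in\cP(k)$ takes finitely many distinct values $a_1,\dots,a_n\in k$, and the nonempty preimages $A_i=\alpha^{-1}(a_i)$ form a finite partition $\mathcal{F}$ of $\N$ with $\alpha=\sum_ia_i\chi_{A_i}\in\cP_{\mathcal{F}}(k)$. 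Hence $\cP(k)=\bigcup_{\mathcal{F}}\cP_{\mathcal{F}}(k)$, completing the proof. No step looks to be a serious obstacle; the only thing to watch is that the partitions are required to consist of nonempty sets so that the isomorphism $\cP_{\mathcal{F}}(k)\cong k^{|\mathcal{F}|}$ is clean and the directed-set structure behaves correctly under common refinement.
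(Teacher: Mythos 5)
Your proof is correct and follows essentially the same route as the paper's: both exhibit $\cP(k)$ as the directed union, over the filtered poset of finite partitions of $\N$, of the subalgebras spanned by the corresponding idempotents $\chi_{A_i}$, each isomorphic to a finite product $k^n$. The only cosmetic differences are that the paper proves the $\Z$-case and then tensors up via $\cP\otimes k\cong\cP(k)$, while you work directly over $k$, and you write out the separability idempotent for $k^n$ explicitly rather than taking it as standard.
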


\begin{proof} We shall show that $\cP$ is a filtering union of finite products of copies of $\Z$, indexed by
the
finite partitions of $\N$. Here a finite partition of $\N$ is a
finite set $\pi=\{A_1,\dots,A_n\}$ of subsets of $\N$ such that
$\N=A_1\sqcup\dots\sqcup A_n$. We say that a partition
$\rho=\{B_1,\dots,B_m\}$ is \emph{finer} than $\pi$ if the following
condition is satisfied:
\[
(\forall 1\le i\le m)(\exists j)\quad B_i\subset A_j.
\]
Note that if $\pi$ and $\pi'$ are any two finite partitions, then
\[
\pi\land \pi'=\{B\subset\N:(\exists A\in\pi,A'\in\pi')B=A\cap A'\}.
\]
is a finite partition and is finer than each of them. Thus
the set
\[
\mathrm{Part}(\N)=\{\pi \mbox{ finite partition of $\N$ }\}.
\]
is a filtered partially ordered set. If $\pi\in \mathrm{Part}(\N)$ has $n$ elements,
put
\[
\cP\supset R_\pi=\bigoplus_{i=1}^n \Z P_{A_i}.
\]
Observe that $R_\pi\cong \Z^n$ and that $\cP=\bigcup_\pi R_\pi$. This proves the proposition
in the case $k=\Z$. The general case follows from this using the isomorphism $\cP\otimes k\iso \cP(k)$.
\end{proof}

\begin{coro}\label{cor:cpfil}
If $k$ is a field, then $\cP(k)$ is a von Neumann regular ring. In other words, every
$\cP(k)$-module is flat.
\end{coro}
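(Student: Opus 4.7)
The plan is to deduce this immediately from Proposition \ref{prop:cpfil} together with the well-known characterization of von Neumann regular rings.

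First I would invoke Proposition \ref{prop:cpfil} to write $\cP(k) = \bigcup_{\pi} (R_\pi \otimes k)$, where $\pi$ ranges over the filtered poset of finite partitions of $\N$ and, since $k$ is a field, $R_\pi \otimes k \cong k^n$ (where $n = |\pi|$). A finite product of fields is a von Neumann regular ring, because for any $x = (x_1,\dots,x_n) \in k^n$ one can define $y \in k^n$ coordinate-wise by $y_i = x_i^{-1}$ if $x_i \neq 0$ and $y_i = 0$ otherwise, yielding $xyx = x$.

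Next I would observe that any $x \in \cP(k)$ lies in some subalgebra $R_\pi \otimes k$ by the filtering property, so a von Neumann inverse $y$ for $x$ exists inside $R_\pi \otimes k \subset \cP(k)$. Therefore $\cP(k)$ itself is von Neumann regular. Since every element already has a von Neumann inverse at a finite stage, no colimit-theoretic subtleties appear.

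Finally, the second assertion is the standard equivalence (due to Auslander) that a ring $R$ is von Neumann regular if and only if every $R$-module is flat; one direction follows because for any $x \in R$ the ideal $xR$ is generated by the idempotent $xy$ (with $xyx=x$), so every finitely generated ideal is a direct summand, hence flat, and one concludes by Lazard's theorem or by checking $\mathrm{Tor}$-vanishing on cyclic modules. The converse (not needed here) is similarly short. I would just cite this equivalence rather than reprove it.

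I do not anticipate a main obstacle: the substance of the argument is already contained in Proposition \ref{prop:cpfil}, and the remaining step is a one-line reduction to a textbook fact about VNR rings.
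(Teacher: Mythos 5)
Your proof is correct and matches the intended argument: the paper states the corollary without a separate proof, as an immediate consequence of Proposition \ref{prop:cpfil}, and the route you spell out (each $R_\pi\otimes k\cong k^n$ is von Neumann regular, the VNR condition is element-wise and hence passes to filtered unions, and VNR is equivalent to all modules being flat) is exactly the implicit reasoning.
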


\begin{prop}\label{prop:gamaflatp}
Let $R$ be a unital ring. Then $\Gamma(R)$ is flat, both as a left and as a right $\cP(R)$-module. 
\end{prop}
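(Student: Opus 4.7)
The plan is to reduce, via base change, to the case $R=\Z$, and then to use the filtering union $\cP=\bigcup_\pi R_\pi$ from Proposition \ref{prop:cpfil} to reduce further to flatness over each $R_\pi\cong\Z^n$. First, applying the isomorphisms \eqref{map:isotensog} and \eqref{map:isotensop}, identify $\cP(R)=\cP\otimes R$ and $\Gamma(R)=\Gamma\otimes R$, and observe that the map $\cP\to\cP(R)$, $p\mapsto p\otimes 1$, is a ring homomorphism whose image is central (since $\cP$ is commutative). A direct computation yields
\[
\Gamma(R)\cong \Gamma\otimes_\cP \cP(R)\cong \cP(R)\otimes_\cP\Gamma
\]
as right and left $\cP(R)$-modules, respectively. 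Since base change along a ring homomorphism preserves flatness, it suffices to show that $\Gamma$ is flat both as a left and as a right $\cP$-module.

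Next, I would exploit the filtration $\cP=\bigcup_\pi R_\pi$ over finite partitions $\pi=\{A_1,\dots,A_n\}$ of $\N$, with $R_\pi=\bigoplus_i\Z\chi_{A_i}\cong\Z^n$. For $\pi$ refining $\pi_0$, each idempotent of $R_{\pi_0}$ decomposes as a sum of orthogonal idempotents of $R_\pi$; the orthogonal idempotent decomposition of $R_{\pi_0}$ then identifies $R_\pi$ with a finite direct sum of copies of the direct summands $\Z\chi_A$ of $R_{\pi_0}$, so $R_\pi$ is projective over $R_{\pi_0}$, and $\cP=\colim_{\pi\ge \pi_0}R_\pi$ is flat over $R_{\pi_0}$. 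Now given any finitely generated right ideal $I\subset\cP$ with generators $p_1,\ldots,p_k$, choose $\pi_0$ with $p_1,\ldots,p_k\in R_{\pi_0}$, set $I_0=\sum_i R_{\pi_0}p_i$, and use the flatness of $\cP$ over $R_{\pi_0}$ to identify $I=\cP\otimes_{R_{\pi_0}}I_0$ and $I\otimes_\cP\Gamma=I_0\otimes_{R_{\pi_0}}\Gamma$. Thus it remains to show $\Gamma$ is flat over each $R_\pi$.

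This is the heart of the argument: the orthogonal idempotents $\chi_{A_i}$ give a decomposition $\Gamma=\bigoplus_{i=1}^n\Gamma\chi_{A_i}$ as a right $R_\pi$-module, and since $R_\pi=\prod_i\Z\chi_{A_i}$ is a finite product of rings, an $R_\pi$-module $\bigoplus_i M_i$ (with $M_i$ a $\Z\chi_{A_i}$-module) is flat iff each $M_i$ is $\Z\chi_{A_i}\cong\Z$-flat. Each $\Gamma\chi_{A_i}$ is a subgroup of $M_\N(\Z)$, hence torsion-free, hence $\Z$-flat. Left flatness follows symmetrically from the decomposition $\Gamma=\bigoplus_i\chi_{A_i}\Gamma$. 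The main subtlety worth flagging is that the left and right $\cP$-actions on $\Gamma$ genuinely differ (row versus column restrictions), so despite $\cP$ being commutative neither side follows formally from the other; both must be verified, though by essentially identical arguments.
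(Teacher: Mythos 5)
Your proof is correct and takes essentially the same route as the paper's: reduce to $R=\Z$ by base change, use the filtration $\cP=\bigcup_\pi R_\pi$ from Proposition \ref{prop:cpfil} to reduce to flatness over each $R_\pi\cong\Z^n$, and then decompose along the orthogonal idempotents $\chi_{A_i}$ to reduce to $\Z$-flatness of the summands. Your middle step (first proving $\cP$ flat over $R_{\pi_0}$ and invoking the finitely-generated-ideal criterion) is a modest detour compared with the paper's more direct identity $\Gamma\otimes_\cP M=\colim_\pi\Gamma\otimes_{R_\pi}M$ together with exactness of filtered colimits, and your torsion-freeness argument for $\Z$-flatness of $\Gamma\chi_{A_i}$ is a small simplification of the paper's appeal to $\Gamma\otimes M\cong\Gamma(M)$, but neither amounts to a genuinely different approach; your closing remark that the left and right $\cP$-actions on $\Gamma$ differ (row versus column restriction) and hence both sides need checking is accurate and matches the paper's own caveat.
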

\begin{proof}
We prove that $\Gamma(R)$ is flat as a right $\cP(R)$-module; the proof that it is also flat on the left is similar.
If $M$ is a $\cP(R)$-module, then 
\[
\Gamma(R)\otimes_{\cP(R)}M=\Gamma\otimes R\otimes_{\cP\otimes R}M=\Gamma\otimes_\cP M.
\]
Hence it suffices to consider the case $R=\Z$. In view of Proposition \ref{prop:cpfil} and its proof, we have
\[
\Gamma\otimes_\cP M=\colim_{\pi\in \mathrm{Part}(\N)}\Gamma\otimes_{R_\pi}M.
\]
Hence it suffices to show that $\Gamma$ is flat as a module over $R_\pi$, for each $\pi\in\mathrm{Part}(\N)$. We have
\[
R_\pi=\bigoplus_{A\in\pi} \Z P_{A}.
\]
Hence
 \[
\Gamma\otimes_{R_\pi}M=\bigoplus_{A\in\pi}\Gamma p_A\otimes p_AM.
\]
Thus it suffices to show that $\Gamma p_A$ is flat as an abelian group. Since $\Gamma p_A$ is a direct summand of $\Gamma$,
we are reduced to showing that $\Gamma$ is $\Z$-flat. As said above, the map \eqref{map:isotensog} is an isomorphism for every ring; in particular this
applies to show that if $M$ is any abelian group --regarded as a ring with trivial multiplication-- then $\Gamma\otimes M=\Gamma(M)$. Since $M\to \Gamma(M)$
is clearly exact, this conlcudes the proof.
\end{proof}

\section{Excision}\label{sec:exci}
A ring $A$ is called \emph{$K$-excisive} if for every ideal embedding $A\triqui B$ the map $K_*(A)\to K_*(B:A)$ is an isomorphism. It was proved by Suslin and Wodzicki (\cite{sw1}*{Theorem C}) that if a ring $A$ satisfies the following property then it is $K$-excisive.
\begin{gather*}
\forall n, \forall a\in A^{\oplus n}, \exists b\in
A^{\oplus n}, \ \ c,d \in A,\text{ such that } a=cdb \text{ and such that}\\
 (0:_Ad)_r:=\{v\in A:dv=0\}= (0:_Acd)_r.
\end{gather*}
The right ideal $(0:_Ad)_r$ is called the {\it right annihilator} of $d$ in $A$. The property above is the so-called left \emph{triple factorization property} (\textup{TFP}). A ring is $K$-excisive if and only if its opposite ring $A^{op}$ is (\cite{sw1}*{Remark (1) pp 53}), so rings satisfying the right \textup{TFP} are excisive also. Further results of Wodzicki (\cite{wodex}*{Theorems 1.1 and 3.1}) and
of Suslin-Wodzicki (\cite{sw1}*{Theorem B}) establish that a $\Q$-algebra $A$ is $K$-excisive if and only if it is excisive for cyclic homology, and that this happens if and only if the \emph{bar complex} $(C^{bar}_*(A),b')$ is exact. Here
\begin{gather*}
b':C^{bar}_{n+1}(A)=A^{\otimes n+2}\to A^{\otimes n+1}=C^{bar}_{n}(A)\qquad (n\ge 0)\\
b'(a_0\otimes\cdots\otimes a_{n+1})=
\sum_{i=0}^n(-1)^ia_0\otimes\cdots\otimes
a_ia_{i+1}\otimes\cdots\otimes a_{n+1}.
\end{gather*}
The tensor products above are taken over $\Z$ or, equivalently, over
$\Q$, since $A$ is assumed to be a $\Q$-algebra. The $\Q$-algebras
whose bar homology vanishes --that is, the $K$-excisive ones-- are
also called \emph{$H$-unital}.

\begin{prop}\label{prop:tfp}
Let $\fA$ be a bornological algebra and $S\triqui\elli$ a symmetric
ideal. Assume that $S(\fA)$ has the (left or right) triple
factorization property. Then $I_{S(\fA)}$ is $K$-excisive. 
\end{prop}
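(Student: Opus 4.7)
The plan is to invoke the Suslin--Wodzicki theorem (\cite{sw1}*{Theorem C}) already recalled above, thereby reducing the problem to showing that $I_{S(\fA)}$ itself enjoys the left (respectively, right) triple factorization property. The key structural input is Proposition~\ref{prop:cpg}, which identifies $I_{S(\fA)}$ with the crossed product $S(\fA)\#_\cP\Gamma$; this says that every element of $I_{S(\fA)}$ is a \emph{finite} sum $\sum_k \alpha_k\#U_{f_k}$ with $\alpha_k\in S(\fA)$ and $f_k\in\emb$, which is what will let us transfer a factorization question in $I_{S(\fA)}$ to one in $S(\fA)$.

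Given $(a_1,\dots,a_n)\in I_{S(\fA)}^{\oplus n}$, I would first expand each $a_i=\sum_{k=1}^{M_i}\alpha_{ik}\#U_{f_{ik}}$ and gather the finite collection $\{\alpha_{ik}\}_{i,k}$ into a tuple $(s_1,\dots,s_N)\in S(\fA)^{\oplus N}$. Applying the left \textup{TFP} hypothesis on $S(\fA)$ to this tuple yields $c,d\in S(\fA)$ and $t_1,\dots,t_N\in S(\fA)$ with $s_j=cd\,t_j$ for every $j$ and $(0:_{S(\fA)}d)_r=(0:_{S(\fA)}cd)_r$. I would then lift by setting $C=c\#1$, $D=d\#1$ (the corresponding diagonal matrices via Proposition~\ref{prop:cpg}) and $b_i=\sum_k t_{\sigma(i,k)}\#U_{f_{ik}}\in I_{S(\fA)}$, where $\sigma(i,k)$ records the index of $\alpha_{ik}$ within $(s_1,\dots,s_N)$. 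The factorization $a_i=CDb_i$ drops out of the crossed product rule \eqref{eqprodcp} together with the fact that the identity embedding acts trivially on $S(\fA)$.

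The step I expect to be the main obstacle is the annihilator identity $(0:_{I_{S(\fA)}}D)_r=(0:_{I_{S(\fA)}}CD)_r$. One inclusion is automatic; for the other, suppose $X\in I_{S(\fA)}$ satisfies $CDX=0$. Viewing $X$ as an $\N\times\N$ matrix over $\fA$, this says $(cd)_i\,X_{ij}=0$ for all $i,j$. For each fixed column index $j$, the column $X_{\bullet,j}$ is a sequence in $\fA$ whose nonzero entries form a subset of those of $X$; since $X\in S(\N\times\N,\fA)$ and $S$ is symmetric (hence stable under restriction to subsets of $\N$), the column lies in $S(\fA)$. The equation $cd\cdot X_{\bullet,j}=0$ then places $X_{\bullet,j}\in (0:_{S(\fA)}cd)_r=(0:_{S(\fA)}d)_r$ by the \textup{TFP} of $S(\fA)$, so $d\cdot X_{\bullet,j}=0$; running over all $j$ yields $DX=0$. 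The right-\textup{TFP} case is handled by the same argument with rows in place of columns, and once \textup{TFP} for $I_{S(\fA)}$ has been established the Suslin--Wodzicki theorem closes the argument.
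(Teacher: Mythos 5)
Your proof is correct, and it takes a genuinely different route from the paper's. The paper does \emph{not} show that $I_{S(\fA)}$ has the \textup{TFP}; instead it verifies $H$-unitality directly by exhibiting a contracting homotopy for the bar complex of $I_{S(\fA)}$, essentially reproducing the internal mechanism of the Suslin--Wodzicki argument rather than invoking Theorem~C as a black box. What you do instead is \emph{lift} the \textup{TFP} from $S(\fA)$ to $I_{S(\fA)}$ via the crossed-product presentation $I_{S(\fA)}=S(\fA)\#_\cP\Gamma$ of Proposition~\ref{prop:cpg}, and then cite \cite{sw1}*{Theorem C} directly. The two proofs share one key structural observation --- the identification of the right annihilator of a diagonal matrix $\diag(\theta)$ in $I_{S(\fA)}$ as those $T$ all of whose columns lie in $(0:_{S(\fA)}\theta)_r$, which is where symmetry of $S$ enters in both --- but after that the arguments diverge: the paper carries out the $b'$-boundary computation $z=b'(w)+\diag(\gamma)\otimes\diag(\delta)\,b'(y)$ and shows the correction term vanishes, whereas you assemble $C=\diag(c)$, $D=\diag(d)$, and $b_i$ and check the annihilator identity in one shot. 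Your route is somewhat shorter and more modular, at the cost of proving the slightly stronger intermediate statement that $I_{S(\fA)}$ itself enjoys the \textup{TFP}; the paper's route stays inside the bar complex and does not need that. One small point worth making explicit in a final write-up: your claim that each column $X_{\bullet,j}$ lies in $S(\fA)$ follows because, given the bijection $u:\N\to\N\times\N$ used to define $S(\N\times\N,\fA)$, the composite $h=u^{-1}\circ(\,\cdot\,,j):\N\to\N$ lies in $\emb$, and the column equals $h^{\dagger}_{*}$ applied to the defining sequence of $X$; symmetry of $S$ is exactly what makes this stay in $S(\fA)$.
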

\begin{proof}
Assume that $S(\fA)$ has the left \textup{TFP}. We have to prove that $I_{S(\fA)}$ is $H$-unital.
Let $n\ge 0$ and let $z\in C_n^{bar}(I_{S(\fA)})$ be a cycle. We may
write
\[
z=\sum_{i=1}^m\diag(\alpha^{0,i})U_{f_{0,i}}\otimes\cdots\otimes
\diag(\alpha^{n,i})U_{f_{n,i}},
\]
where $\supp(\alpha^{j,i})=\ran(f_{j,i})$ for all $i,j$. By
\textup{TFP}, there are elements $\gamma$, $\delta$ and
$\beta^1,\dots,\beta^m$ in $S(\fA)$ such that
$\alpha^{0,i}=\gamma\delta\beta^i$ $(1\le i\le m)$, and such that
\begin{equation}\label{eq:annil1}
(0:_{S(\fA)}\gamma\delta)_r=(0:_{S(\fA)}\delta)_r.
\end{equation}
Now observe that if $\theta\in S(\fA)$ then, by our definition of $I_{S(\fA)}$ \eqref{eq:mis}, we
have
\[
(0:_{I_{S(\fA)}}\diag(\theta))_r=\{T\in I_{S(\fA)}: (\forall j)\ \
T_{*,j}\in (0:_{S(\fA)}\theta)_r\}.
\]
Hence, \eqref{eq:annil1} implies that
\begin{equation}\label{eq:annil2}
(0:_{I_{S(\fA)}}\diag(\gamma\delta))_r=(0:_{I_{S(\fA)}}\diag(\delta))_r.
\end{equation}
Put
\[
y=\sum_i\diag(\beta^{i})U_{f_{0,i}}\otimes\diag(\alpha^{1,i})U_{f_{1,i}}\otimes\cdots
\otimes \diag(\alpha^{n,i})U_{f_{n,i}}.
\]
Consider the following element of $C^{bar}_{n+1}(I_{S(\fA)})$
\[
w=\diag(\gamma)\otimes \diag(\delta)y.
\]
We have
\[
b'(w)=z-\diag(\gamma)\otimes \diag(\delta)b'(y).
\]
If $n=0$ then $b'(y)=0$, so this proves that $z$ is a boundary. We
have to show that $\diag(\delta)b'(y)=0$ if $n\ge 1$. Choose a basis
$\{v_l\}$ of the $\Q$-vector space $C_{n-1}^{bar}(I_{S(\fA)})$. Then
$y=\sum_lT_l\otimes v_l$ for unique $T_l\in I_{S(\fA)}$, and
\[
0=b'(z)=\diag(\gamma\delta)
b'(y)=\sum_l\diag(\gamma\delta)T_l\otimes v_l.
\]
Hence we must have $\diag(\gamma\delta)T_l=0$ for all $l$, and therefore $\diag(\delta)b'(y)=0$ by \eqref{eq:annil2}.
\end{proof}

\begin{exa}\label{exa:excicstar}
Any Banach algebra with a bounded left approximate unit satisfies
the Cohen-Hewitt factorization property; thus it has the left
\textup{TFP} (\cite{friendly}*{Lemma 6.5.1}). In particular, this
applies to $C^*$-algebras. If $\fA$ is a $C^*$-algebra then $c_0(\fA)$ is
again a $C^*$-algebra; hence $I_{c_0(\fA)}$ is $K$-excisive,
by Proposition \ref{prop:tfp}.
\end{exa}

\begin{exa}\label{exa:exciunital}
If $\fA$ is a unital Banach algebra then $\ell^{\infty-}(\fA)$ has
the \textup{TFP}. To see this, let
$\alpha^1,\dots,\alpha^m\in\ell^{\infty-}$. Choose $p$ such that
$\alpha^i\in\ell^p(\fA)$ for all $i$. For each $n$ put
\[
\gamma_n=\max_{1\le i\le m}{||\alpha^i_n||},\ \
\beta^i_n=\left\{\begin{matrix}\alpha^i_n/\gamma_n^{1/2}& \text{ if
} \gamma_n\ne 0\\ 0 &\text{otherwise.}\end{matrix}\right.
\]
Then $||\beta_n^i||\le ||\alpha_n^i||^{1/2}$ and therefore $\beta^i\in\ell^{2p}(\fA)$. Similarly $\gamma^{1/4}\in\ell^{4p}(\fA)$. One checks that the factorization $\alpha^i=\gamma^{1/4}\gamma^{1/4}\beta^i$ satisfies
the requirements of the \textup{TFP}.
\end{exa}

\section{Homology of crossed products with \topdf{$\Gamma$}{gama}}\label{sec:homo}

\numberwithin{equation}{subsection}
\subsection{Homology of augmented algebras}\label{subsec:suple}

In this subsection $A$ and $B$ will be unital rings; furthermore, $B$ will be an
$A$-algebra, that is, $B$ will be a ring together with a
unital ring homomorphism $\iota:A\to B$. 
Further assume that $A$ is equipped with a left $B$-module structure and a
surjective $B$-module homomorphism $\pi:B\to A$ such that
$\pi\iota=id_A$. Observe that the triple $(B,A,\pi)$ is an augmented
ring in the sense of Cartan-Eilenberg \cite{carteil}*{Chapter
VIII,\S1}. Since in addition, $B$ is an $A$-algebra, we call the
triple $(B,A,\pi)$ an \emph{augmented algebra}. Let $M$ be a right
$B$-module. Consider the simplicial $A$-module $\perp(B/A, M) $
given in dimension $n$ by
\[
\perp_n(B/A, M)=M\otimes_A B^{\otimes_A n},
\]
with face and degeneracy maps defined as follows $(n\ge 0)$
\begin{gather*}
\partial_i:\perp_{n+1}(B/A, M)\to \perp_{n}(B/A, M),\\
\partial_i(x_0\otimes\dots\otimes x_{n+1})=\left\{\begin{matrix}x_0\otimes\dots\otimes x_ix_{i+1}\otimes\dots \otimes x_{n+1}& i\le n\\ x_0\otimes\dots\otimes x_{n}\pi(x_{n+1})& i=n+1\end{matrix}\right.\\
\delta_i:\perp_{n}(B/A, M)\to \perp_{n+1}(B/A, M),\ \ (0\le i\le n)\\
\delta_i(x_0\otimes\dots\otimes x_n)= x_0\otimes\dots\otimes
x_i\otimes 1\otimes x_{i+1}\otimes\dots\otimes x_n.
\end{gather*}
The homology of $(B/A, M)$ relative to $(A,B,\pi)$, denoted $H_*(B/A,M)$, is the homotopy of the simplicial module
$\perp (B/A, M)$;
\[
H_*(B/A,M)=\pi_*(\perp (B/A, M))=H_*(\perp (B/A, M),\partial).
\]
Here
\[
\partial=\sum_{i=0}^{n+1}(-1)^i\partial_i:\perp_{n+1} (B/A, M)\to \perp_n (B/A, M)
\]
is the alternating sum of the face maps. We have
\[
H_0(B/A,M)=M\otimes_BA.
\]
Let $P(B/A)=\perp (B/A,B)$; $\pi:P(B/A)\to A$ is a resolution which is projective relative to $B/A$, and $\perp(B/A,N)=N\otimes_BP(B/A)$.
Hence if $B$ is flat both as a left and as a right $A$-module, then
\[
H_*(B/A,M)=\tor_*^B(M,A).
\]
Without flatness assumptions, we may regard the groups $H_*(B/A,M)$ as relative $\tor$ groups. 
\begin{lem}\label{lem:moritesuple}
Let $N$ be a right $B$-module. Consider $N^2=N^{1\times 2}$ as a right module over
$M_2B$ via the matrix product. View $M_2B$ as an $A\oplus A$-algebra through
the diagonal embedding $(a_1,a_2)\mapsto E_{11}a_1+E_{22}a_2$. Then the map
 \begin{gather*}
\iota:\perp(B/A,N)\to \perp (M_2(B)/A\oplus A, N\oplus N)\\
\iota(x_0\otimes\dots\otimes x_n)=E_{11}x_0\otimes\dots\otimes E_{11}x_n
\end{gather*}
is a quasi-isomorphism.
\end{lem}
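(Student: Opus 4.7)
The first task is to verify that $\iota$ is a well-defined morphism of simplicial $A$-modules. It respects the relations defining the relative tensor product because moving $a\in A$ across a tensor on the source corresponds to moving $(a,0)=E_{11}\iota(a)\in A\oplus A$ across on the target. Compatibility with faces and degeneracies follows from the identities $E_{11}x\cdot E_{11}y=E_{11}xy$, $(x,0)\cdot E_{11}y=(xy,0)$, and $\pi_{M_2B}(E_{11}x)=(\pi(x),0)$, where $\pi_{M_2B}\colon M_2B\to A\oplus A$ is the natural $M_2B$-linear augmentation $M\mapsto M\cdot(1,1)^{T}$ for the column-vector left $M_2B$-module structure on $A\oplus A$. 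Compatibility with degeneracies further uses that $E_{11}E_{22}=0$ kills the off-diagonal contributions of inserting $1=E_{11}+E_{22}$.

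The heart of the proof is a \emph{path} decomposition of the target. Writing each $M\in M_2B$ as $\sum_{i,j}E_{ij}M_{ij}$, inserting $1=E_{11}+E_{22}$ at each tensor position, and using that $E_{jj}\in A\oplus A$ may be moved across tensors together with $E_{jj}E_{kl}=\delta_{jk}E_{jl}$, I obtain a natural decomposition
\[
(N\oplus N)\otimes_{A\oplus A}(M_2B)^{\otimes_{A\oplus A}n}\;\cong\;\bigoplus_{\mathbf{i}=(i_0,\ldots,i_n)\in\{1,2\}^{n+1}}N\otimes_A B^{\otimes_A n},
\]
in which a tensor $n\otimes b_1\otimes\cdots\otimes b_n$ in the $\mathbf{i}$-summand corresponds to $ne_{i_0}\otimes E_{i_0i_1}b_1\otimes\cdots\otimes E_{i_{n-1}i_n}b_n$. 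The image of $\iota$ is exactly the summand indexed by the constant path $\mathbf{1}=(1,\ldots,1)$.

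Let $X$ denote the simplicial set with $X_n=\{1,2\}^{n+1}$ and standard nerve face/degeneracy operators (dropping, respectively duplicating, an index). A direct matrix computation shows that under the decomposition above each face $\partial_j$ of the target acts as $\partial_j^{X}\otimes\partial_j^{\perp(B/A,N)}$: inner faces use $E_{i_{j-1}i_j}b_j\cdot E_{i_ji_{j+1}}b_{j+1}=E_{i_{j-1}i_{j+1}}b_jb_{j+1}$; the face $\partial_0$ uses $ne_{i_0}\cdot E_{i_0i_1}b_1=(nb_1)e_{i_1}$; and $\partial_{n+1}$ relies crucially on $\pi_{M_2B}(E_{i_ni_{n+1}}b_{n+1})=\pi(b_{n+1})e_{i_n}$ being independent of $i_{n+1}$, so that $\partial_{n+1}^X$ simply drops the last index. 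Thus the target is isomorphic, as a simplicial $\Z$-module, to the diagonal tensor product $X\otimes_{\Z}\perp(B/A,N)$, and $\iota$ is induced by the simplicial-set map $\ast\to X$ picking out the vertex $1\in X_0$.

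Finally, $X$ is the nerve of the codiscrete groupoid on two objects and is simplicially contractible via the extra degeneracy $s_{-1}(\mathbf{i})=(1,i_0,\ldots,i_n)$, which satisfies $\partial_0s_{-1}=\mathrm{id}$ and $\partial_is_{-1}=s_{-1}\partial_{i-1}$ for $i\geq 1$. Hence the normalized chain complex $C_*(X)$ is a complex of free $\Z$-modules quasi-isomorphic to $\Z$ concentrated in degree $0$, and the cofibre of the basepoint inclusion $\Z\hookrightarrow C_*(X)$ is an acyclic complex of flat abelian groups. By the Eilenberg--Zilber theorem, $C_*(X\otimes\perp(B/A,N))\simeq C_*(X)\otimes C_*(\perp(B/A,N))$ as complexes of abelian groups, and tensoring the acyclic cofibre with $C_*(\perp(B/A,N))$ preserves acyclicity. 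Therefore $\iota$ is a quasi-isomorphism. The main technical obstacle lies in the bookkeeping for the path decomposition and in verifying that every face acts diagonally, in particular for $\partial_{n+1}$, whose treatment depends on the correct choice of $\pi_{M_2B}$.
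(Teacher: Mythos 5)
Your proof is correct, but it takes a genuinely different route from the paper's. The paper works at the level of the \emph{relative projective resolutions} $P(B/A)=\perp(B/A,B)$ and $P(M_2B/A^2)$: it exhibits $M_2B$-linear chain maps $\iota'\colon P(B/A)^{2\times 1}\to P(M_2B/A^2)$ and a retraction $p'$ with $p'\iota'=1$, concludes that $P(B/A)^{2\times 1}\to A^{2\times 1}$ is also a resolution which is projective relative to $M_2B/A^2$, and then applies $N^{1\times 2}\otimes_{M_2B}(-)$, invoking independence of relative $\tor$ from the choice of relative projective resolution. You instead decompose the target simplicial module directly: you identify $\perp(M_2B/A\oplus A,N^2)$ with the levelwise tensor $\Z[X]\otimes\perp(B/A,N)$, where $X$ is the nerve of the codiscrete groupoid on $\{1,2\}$ (paths in the index set), check that faces and degeneracies act diagonally (with the right choice of augmentation $\pi_{M_2B}$), and then use the simplicial contractibility of $X$ together with Eilenberg--Zilber and flatness of the acyclic cofibre of $\Z\to C_*(\Z[X])$ to conclude. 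Both arguments are sound. The paper's is shorter and stays inside the relative homological-algebra framework already set up in the section; yours is more explicit and self-contained, trading the abstract comparison of resolutions for concrete simplicial combinatorics and the standard Eilenberg--Zilber/flatness argument, at the cost of the path-decomposition bookkeeping you flag. (There is a minor index slip in your statement of the last face --- for a level-$n$ tensor the last face is $\partial_n$, not $\partial_{n+1}$ --- but the content is right.)
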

\begin{proof}
Consider the maps
\begin{gather*}
\iota':P(B/A)^{2\times 1}\to P(M_2B/A^2),\\
\iota'(E_{i1}( x_0\otimes\dots\otimes x_n))=E_{i1}x_0\otimes E_{11}x_1\otimes\dots\otimes E_{11}x_n,\\
\mbox{ and }p':P(M_2B/A^2)\to P(B/A)^{2\times 1},\\
p'(E_{i_0,i_1}x_0\otimes\dots\otimes
E_{i_n,i_{n+1}}x_n)=E_{i_01}(x_0\otimes\dots\otimes x_n).
\end{gather*}
One checks that both $\iota'$ and $p'$ are $M_2B$-linear chain homomorphisms, and that $p'\iota'=1$. In particular
$\pi^{2\times 1}:P(B/A)^{2\times 1}\to A^{2\times 1}$ is a projective resolution relative to $M_2A/A^2$, whence
$$\iota=N^{1\times 2}\otimes_{M_2B}\iota'$$ is a quasi-isomorphism, as claimed.
\end{proof}

\subsection{The augmented algebra \topdf{$(\Gamma,\cP,\epsilon_l)$}{Gamma,cP,epsilonl}}\label{subsec:gammasuple}

Regarding the elements of $2^\N$ as sequences of zeros and ones,
there is an obvious action $\emb\times 2^\N\to 2^\N$, $(f,p)\mapsto f_*(p)$. It
agrees with the inner action; we have 
\[
f_*(p)=fpf^\dagger.
\] 
Thus $\Z[2^\N]$ is a $\Z[\emb]$-module. Note that, if $A,B\subset\N$ are disjoint, then
for $I\subset \Z[2^\N]$ as in \eqref{eq:I} and $q\in 2^\N$, we have
\begin{multline*}
f_*((p_{A\sqcup B}-p_A-p_B)q)=\\
\left(p_{f((A\sqcup
B)\cap\dom(f))}-p_{f(A\cap\dom(f))}-p_{f(B\cap\dom(f))}\right)f_*(q)\in
I,
\end{multline*}
\begin{align*}
(f (p_{A\sqcup B}-p_A-p_B)g)_*(q)&=f_*((p_{A\sqcup B}-p_A-p_B)_*(g_*(q)))\\
&=f_*((p_{A\sqcup B}-p_A-p_B)\cdot g_*(q))\in I.
\end{align*}
Thus $\cP$ is a $\Gamma$-module. Let $f\in\emb$; put
\[
\epsilon_l(f)=p_{\ran(f)}\in 2^\N\owns
\epsilon_r(f)=\epsilon_l(f^\dagger)=p_{\dom(f)}.
\]
Note that
\[
\epsilon_l(fg)(n)=p_{\ran(fg)}(n)=\left\{\begin{matrix} 1& \mbox{ if } n\in f(\dom(f)\cap\ran(g))\\
0&\mbox{ otherwise}\end{matrix}\right\}=f_*(\epsilon_l(g))(n).
\]
Thus the induced linear map $\epsilon_l:\Z[\emb]\to \Z[2^\N]$ is a
homomorphism of left $\Z[\emb]$-modules. In particular, if
$A,B\subset\N$ are disjoint, we have
\[
\epsilon_l(f(p_{A\sqcup B}-p_A-p_B)g)=f_*(p_{A\sqcup
B}-p_A-p_B)\epsilon_l(g)\in I.
\]
Hence $\epsilon_l$ induces a homomorphism of left $\Gamma$-modules
\[
\epsilon_l:\Gamma\to \cP.
\]
Observe that the canonical inclusion $\cP\subset\Gamma$, which is an
algebra homomorphism, but not a $\Gamma$-module homomorphism, is a
section of $\epsilon_l$. Thus we are in the augmented algebra setting described
above. Moreover $\Gamma$ is flat over $\cP$, by Proposition \ref{prop:gamaflatp}. Hence
\begin{equation}\label{eq:hgammator}
H_*(\Gamma/\cP,M)=\tor_*^{\Gamma}(M,\cP).
\end{equation}
Observe also that if $k$ is any commutative ring and $M$ is a $\Gamma(k)$-module, then
\[C(\Gamma/\cP,M)=C(\Gamma(k)/\cP(k),M).\]
In particular, 
\[
H_*(\Gamma/\cP,M)=H_*(\Gamma(k)/\cP(k),M). 
\]
In the next lemma and below we consider the following submonoids of
$\emb$
\[
\emb\supset\cE=\{f:\dom f=\N\}\supset\estar=\{f\in\cE:\ran(f)=\N\}.
\]
If $M$ is a $\Gamma$-module and $\fS\in\{\cE,\estar\}$ we write
\[
M_\fS=M/\mspan\{m-f_*(m):f\in\fS\}.
\]
Here the $\mspan$ is $\Z$-linear.

\begin{lem}\label{lem:kerepsi}
The kernel of $\epsilon_l:\Gamma\to \cP$ is generated, as a left
$\cP$-module, by the elements $U_{f}- 1$, $f\in\estar$.
\end{lem}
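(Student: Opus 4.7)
The plan is to show that $\ker\epsilon_l$ is spanned by the differences $U_f - p_{\ran f}$ for $f\in\emb$, and then to exhibit each such difference as a left $\cP$-linear combination of the proposed generators $U_g - 1$ with $g\in\estar$.

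First I would record that the inclusion $\cP\hookrightarrow\Gamma$ is a left $\cP$-linear section of $\epsilon_l$: identifying $p_A\in\cP$ with the partial identity on $A$ in $\emb$, one has $\epsilon_l(p_A)=p_{\ran(p_A)}=p_A$. Hence $\Gamma = \cP\oplus\ker\epsilon_l$ as left $\cP$-modules, and since by Lemma \ref{lem:presfh} the ring $\Gamma$ is $\Z$-spanned by $\{U_f : f\in\emb\}$, the submodule $\ker\epsilon_l$ is spanned, as an abelian group and hence also as a left $\cP$-module, by the elements $U_f - p_{\ran f}$, $f\in\emb$.

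Next I would handle the ``easy'' case of those $f\in\emb$ for which $|\N\setminus\dom f| = |\N\setminus\ran f|$. In this situation one extends $f$ to a bijection $\tilde f\in\estar$ by any bijection $\N\setminus\dom f \to \N\setminus\ran f$. A short matrix computation shows that $p_{\ran f}\, U_{\tilde f} = U_f$ (indeed, for $j\notin\dom f$ one has $\tilde f(j)\in\N\setminus\ran f$, which kills the corresponding entry), so
\[
U_f - p_{\ran f} = p_{\ran f}\,(U_{\tilde f}-1),
\]
exactly of the desired form.

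The main obstacle is the case where the cardinality equality fails, typified by the shift $n\mapsto n+1$ with $\dom f=\N$ but $\ran f=\N\setminus\{0\}$, where no bijection of $\N$ extends $f$. Here my plan is to use the $\cP$-relation $p_{A\sqcup B} = p_A + p_B$ in $\Gamma$ to split $f$ into pieces to which the easy case applies. If $\dom f$ is finite then both $\N\setminus\dom f$ and $\N\setminus\ran f$ are cofinite, hence of cardinality $\aleph_0$, and the easy case already applies; so assume $\dom f$ is infinite. Partition $\ran f = B_1\sqcup B_2$ into two infinite subsets (so that each $B_i$ is both infinite and co-infinite), and pull back via the bijection $f$ to obtain $\dom f = D_1\sqcup D_2$ with each $D_i$ infinite and co-infinite. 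By the easy case, each restriction $f_i := f|_{D_i}$ extends to a bijection $\tilde f_i\in\estar$, and $U_{f_i} = p_{B_i}U_{\tilde f_i}$. Combining with the additivity relations $U_f = U_{f_1} + U_{f_2}$ and $p_{\ran f} = p_{B_1} + p_{B_2}$ yields
\[
U_f - p_{\ran f} = p_{B_1}(U_{\tilde f_1}-1) + p_{B_2}(U_{\tilde f_2}-1),
\]
which completes the proof.
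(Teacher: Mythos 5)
Your proof is correct and takes essentially the same approach as the paper's: reduce to the generators $U_f - p_{\ran f}$, then split $\dom f$ into pieces (two infinite pieces in the hard case, the trivial split in the easy case) on which $f$ restricts to a map extendable to a bijection, and recombine using the tight relation $p_{A\sqcup B}=p_A+p_B$.
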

\begin{proof}
Let $K=\ker(\epsilon_l)$. It is clear that $K$ is generated, as an
abelian group, by the elements $U_f-p_{\ran f}$, $f\in\emb$. Assume
that $f\in\emb$ but $f\notin \estar$. We claim that we may choose a
subset $A\subset \dom(f)$ such that $B=\N\backslash A$ is bijectable
to $\N\backslash f(A)$, and such that $\N\backslash (\dom f\cap B)$
is bijectable to $\N\backslash f(\dom f\cap B)$. Indeed if
$\N\backslash\dom f$ is already bijectable to $\N\backslash\ran f$,
we may take $A=\dom f$. Otherwise $\dom f$ is infinite, so we may
split it into two disjoint infinite pieces, and take $A$ to be one
of them. Thus the claim is proved. For such $A$, there exist
$g,h\in\estar$ such that $g_{|A}=f_{|A}$ and $h_{|\dom(f)\cap
B}=f_{|\dom(f)\cap B}$. We have
\begin{gather*}
p_{\ran f}=p_{f(A)}+p_{f(\dom f\cap B)}\mbox{ and}\\
 U_f=p_{f(A)}U_{f_{|A}}+p_{f(\dom(f)\cap
B)}U_{f_{\dom(f)\cap B}}=p_{f(A)}U_g+p_{f(\dom(f)\cap B)}U_h.
\end{gather*}
Thus
\[
U_f-p_{\ran f}=p_{f(A)}(U_g-1)+p_{f(\dom f\cap B)}(U_h-1).
\]
\end{proof}

\begin{prop}\label{prop:ce=estar}
Let $M$ be a $\Gamma$-module.  Then
$$H_0(\Gamma/\cP,M)=M_\cE=M_{\estar}.$$
\end{prop}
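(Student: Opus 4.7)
The plan is to identify $H_0(\Gamma/\cP, M)$ with the quotient $M/MK$ where $K = \ker \epsilon_l$, use Lemma~\ref{lem:kerepsi} to rewrite this in terms of $\estar$-relations (giving $H_0 = M_{\estar}$), and then separately establish the equality $M_\cE = M_{\estar}$.

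By the augmented-algebra formalism of Subsection~\ref{subsec:suple}, $H_0(\Gamma/\cP, M) = M \otimes_\Gamma \cP$. The short exact sequence of left $\Gamma$-modules $0 \to K \to \Gamma \to \cP \to 0$, with $K = \ker\epsilon_l$, together with the right-exactness of $M \otimes_\Gamma -$ yields $H_0 = M/MK$. Lemma~\ref{lem:kerepsi} presents $K$ as the left $\cP$-span of $\{U_g - 1 : g \in \estar\}$; absorbing $\cP$-scalars into $m$ (using $1\in\cP$), this gives
\[
MK = \mspan_\Z\{m(U_g-1) : m\in M,\ g\in\estar\} = \mspan_\Z\{g_*(m) - m : m \in M,\ g \in \estar\},
\]
which is precisely the defining submodule of $M_{\estar}$. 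Hence $H_0 = M/MK = M_{\estar}$.

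For the remaining identity $M_\cE = M_{\estar}$, the inclusion $\estar \subset \cE$ gives a canonical surjection $M_{\estar} \twoheadrightarrow M_\cE$, and it suffices to verify the reverse inclusion of relation submodules: for every $f \in \cE$ and $m \in M$, the $\cE$-relation $m - f_*(m)$ should already lie in $MK$. The key observation is the anti-involution $f \mapsto f^\dagger$ on the inverse monoid $\emb$: for $f \in \cE$ (so $\dom f = \N$) the partial inverse $f^\dagger$ satisfies $\ran f^\dagger = \dom f = \N$, whence
\[
\epsilon_l(U_{f^\dagger}) = p_{\ran f^\dagger} = 1, \quad\text{so}\quad U_{f^\dagger} - 1 \in K.
\]
Under the push-forward convention for the $\emb$-action on $M$, the relation $m - f_*(m)$ is realized as $\pm m\cdot(U_{f^\dagger} - 1)$ and therefore lies in $MK$.

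The main subtlety is interpreting the $\emb$-action $f_*(m)$ compatibly with the augmentation $\epsilon_l$: under the push-forward convention, the element of $\Gamma$ implementing the action of $f$ is $U_{f^\dagger}$ rather than $U_f$. This seemingly minor swap is essential, because $\epsilon_l(U_f) = p_{\ran f}$ is typically not $1$ for $f\in\cE\setminus\estar$, whereas $\epsilon_l(U_{f^\dagger}) = 1$ is automatic from $\dom f = \N$. Once this is granted, Step~3 is immediate, and the entire combinatorial content of the proposition has been absorbed into the $f\leftrightarrow f^\dagger$ symmetry of the inverse monoid.
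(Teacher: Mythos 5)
Your proof is correct and takes the same route as the paper, whose proof is the single line ``Immediate from Lemma~\ref{lem:kerepsi}''; you have simply supplied the details, in particular the key observation that $\epsilon_l(U_{f^\dagger})=p_{\dom f}=1$ for $f\in\cE$, which is what makes $M_\cE$ (and not just $M_{\estar}$) equal to $M/MK$. One small caveat: in the displayed chain of identities for $MK$, the equality $\mspan\{m(U_g-1)\}=\mspan\{g_*(m)-m\}$ under the convention $g_*(m)=mU_{g^\dagger}$ is an equality of $\Z$-spans via the substitution $g\mapsto g^\dagger$, valid because $\estar$ is closed under $\dagger$; stated as an identity of individual elements it would be wrong.
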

\begin{proof} Immediate from Lemma \ref{lem:kerepsi}.
\end{proof}

\subsection{Hochschild homology}\label{subsec:HH}
We recall the basic definitions for Hochschild homology of algebras
over a noncommutative base ring (\cite{lod}*{\S1.2.11}). If $N$ is
a $B\otimes B^{op}$-module, we write
\begin{align*}
[b,x]=& bx-xb \qquad (b\in B,x\in N),\\
[B,N]=& \{\sum_{i=1}^n [b_i,x_i]:b_i\in B, x_i\in N, n\ge 1\},\\
N_B=& N/[B,N].
\end{align*}
Next let $A\to B$ be a unital ring homomorphism. Recall from
\cite{lod}*{\S 1.2.11} that the {\em Hochschild} homology of $B$
relative to $A$ with coefficients in $N$, $HH_*(B/A,N)=\pi_*C(B/A,M)$, is
the homotopy of the simplicial $\Z$-module which is
given in dimension $n$ by
\[
C_n(B/A,N)=(N\otimes_A B^{\otimes_A n})_A,
\]
with the following face and degeneracy maps
\begin{gather*}
\mu_i:C_{n+1}(B/A, N)\to C_{n}(B/A, N),\\
\mu_i(x_0\otimes\dots\otimes x_{n+1})=\left\{\begin{matrix}x_0\otimes\dots\otimes x_ix_{i+1}\otimes\dots\otimes x_{n+1}& i\le n\\ x_{n+1}x_0\otimes\dots\otimes x_{n}& i=n+1\end{matrix}\right.\\
s_i:C_n(B/A,N)\to C_{n+1}(B/A, N),\ \ (0\le i\le n)\\
s_i(x_0\otimes\dots\otimes x_n)=x_0\otimes\dots\otimes x_i\otimes
1\otimes x_{i+1}\otimes\dots\otimes x_n.
\end{gather*}
We write $b$ for the alternating sum of the face maps, and
$HH(B/A,N)$ for the resulting chain complex. Thus
$$HH_*(B/A,N)=H_*(HH(B/A,N))$$ is the Hochschild homology of $B/A$
with coefficients $N$. If $A$ is commutative and $B$ is central as
an $A$-bimodule, then $B\otimes_A B^{op}$ is a ring. If furthermore,
$B$ happens to be flat as a left $A$-module, then
\[
HH_*(B/A,N)=\tor^{B\otimes_AB^{op}}_*(B, N).
\]
Note this is the case, for example, if $A$ is a field. We shall write
$HH_*(B,N)$ for $HH_*(B/\Z,N)$.

\begin{rem}\label{rem:hhm} If $A$ and $B$ are commutative and $M$ is a central bimodule, then $C(B/A,M)=M\otimes_BC(B/A,B)$.
\end{rem}
\begin{lem}\label{prop:hhp} (cf. \cite{lod}*{Theorem 1.12.13})
Let $k$ be a field, $A\to B$ a homomorphism of unital $k$-algebras, and $N$ a $B\otimes_kB^{op}$-module. Assume that
$A$ is a filtering colimit of separable $k$-algebras. Then
\[
HH_*(B/k,N)=HH_*(B/A,N).
\]
\end{lem}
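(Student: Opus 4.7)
The plan is to reduce to the case where $A$ is separable over $k$, and then invoke the standard separable-base-change result for Hochschild homology.

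First I would verify the reduction. Write $A=\colim_i A_i$ as a filtering colimit of separable $k$-subalgebras. At the chain level,
\[
C(B/A,N)=\colim_i C(B/A_i,N).
\]
Indeed, since $B^{\otimes_A n}$ is the coequalizer of diagrams involving $\otimes_k$, and filtering colimits commute both with tensor products over $k$ and with coequalizers, we have $B^{\otimes_A n}=\colim_i B^{\otimes_{A_i} n}$; the same argument works for $N\otimes_A B^{\otimes_A n}$ and for the coinvariants under the cyclic left-right action. Because filtering colimits of abelian groups are exact, they commute with homology, so
\[
HH_*(B/A,N)=\colim_i HH_*(B/A_i,N),
\]
and the natural comparison map $HH_*(B/k,N)\to HH_*(B/A,N)$ is the colimit of the comparison maps $HH_*(B/k,N)\to HH_*(B/A_i,N)$. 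It therefore suffices to establish the lemma when $A$ is separable over $k$ and functorial in $A$.

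For the separable case I would appeal to the classical fact (the essential content of the cited \cite{lod}*{Theorem 1.12.13}): when $A$ is separable over $k$ there is a separability idempotent $e\in A\otimes_k A^{op}$ with $\mu(e)=1$ and $(a\otimes 1)e=(1\otimes a)e$ for every $a\in A$. One uses $e$ to write down an explicit $B\otimes_k B^{op}$-linear contracting homotopy for the kernel of the quotient map $B^{\otimes_k\bullet+2}\to B^{\otimes_A\bullet+2}$. In other words, the relative bar resolution $B^{\otimes_A\bullet+2}\to B$ is not only a resolution of $B$ by projective $B\otimes_A B^{op}$-modules, but is moreover a resolution by projective $B\otimes_k B^{op}$-modules. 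Consequently it may be used in place of the absolute bar resolution to compute $HH_*(B/k,N)$, and one gets $HH_*(B/k,N)=HH_*(B/A,N)$. Naturality of this isomorphism in $A$ (which is clear from the construction, as a quotient of chain complexes) supplies the compatibility needed for the colimit step above.

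The main obstacle is the separable case: constructing the $B\otimes_k B^{op}$-linear chain homotopy out of the separability idempotent and checking that it has the required properties is the only non-formal ingredient. The filtering-colimit reduction is then automatic from general nonsense about tensor products and exact colimits.
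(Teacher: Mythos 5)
Your proof is correct and follows essentially the same strategy as the paper: reduce to the case of a separable base by a filtering colimit argument, and then invoke the standard separable base-change result for Hochschild homology. The two treatments differ in emphasis rather than substance. The paper observes that it suffices to show $B\otimes_A B^{op}$ is flat as a $B\otimes_k B^{op}$-module, runs the filtering colimit argument at the level of this flatness statement (using $B\otimes_A B^{op}=\colim_i B\otimes_{A_i}B^{op}$ and that flatness passes to filtering colimits), and cites the separable case as well-known; you instead take the colimit directly at the level of Hochschild chain complexes and prove the separable case by constructing the contracting homotopy from the separability idempotent, which is precisely the usual proof of the cited fact. Your version is somewhat more self-contained; the paper's is shorter because it delegates more to the literature. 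Both are sound.
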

\begin{proof} It suffices to show that $B\otimes_{A}B^{op}$ is flat as a $B\otimes_k B^{op}$-module.
By hypothesis $A=\colim_i A_i$ is a filtering colimit of separable
algebras. Hence $B\otimes_{A}B^{op}=\colim_i B\otimes_{A_i} B^{op}$,
so it suffices to prove that if $k\subset A$ is separable then
$B\otimes_A B$ is flat over $B\otimes_k B^{op}$, and this is
well-known.
\end{proof}

\begin{exa}\label{exa:hhp}
If $k$ is a field, $A$ is a unital $\cP(k)$-algebra, and $N$ is an $A\otimes_k A^{op}$-module, then
$HH_*(A/k,N)=HH_*(A/\cP(k),N)$, by Proposition \ref{prop:cpfil} and Lemma
\ref{prop:hhp}. If $A\supset\Q$, then $HH_*(A,N)=HH_*(A/\Q,N)$ and $HH_*(A/\cP,N)=HH_*(A/\cP(\Q),N)$,
whence we also have $HH_*(A,N)=HH_*(A/\cP,N)$.
\end{exa}

\subsection{Hochschild homology of crossed products with \topdf{$\Gamma$}{Gamma}}\label{subsec:HRbundle}
In this subsection $k$ is a field and, as in \eqref{eq:crossdef}, $R$ is an \emph{$\emb$-bundle over $k$}; that is, $R$ is a $k$-algebra with
a $k$-linear action of $\emb$ so that $R$ is an $\emb$-bundle.
We also fix an $R$-bimodule $M$, central as a
$\cP$-bimodule, together with a left action of $\emb$
\[
\emb\times M\to M, \ \ (f,m)\mapsto f_*(m).
\]
We require that this action induce a $\Gamma$-module structure on
$M$ which is \emph{covariant} in the sense that
\begin{equation}\label{eq:covariant}
f_*(rms)=f_*(r)f_*(m)f_*(s)\quad (r,s\in R, m\in M).
\end{equation}
In this
situation, we can form the crossed product $M\#_{\cP}\Gamma$; this
is the $R\#_\cP\Gamma$-bimodule consisting of  $M\otimes_\cP\Gamma$
equipped with the following left and right actions of
$R\#_\cP\Gamma$
\[
(a\# U_f)(m\# U_g)=af_*(m)\#U_{fg},\qquad (m\# U_g)(a\#
U_f)=mg_*(a)\# U_{gf}.
\]
Observe that, as $R$ is assumed to be a $k$-algebra, $M\#_\cP\fH=M\#_{\cP(k)}\fH(k)$. 
We are interested in the Hochschild homology of $R\#_\cP\Gamma$ with
coefficients in $M\#_{\cP}\Gamma$, which by Example \ref{exa:hhp} is
computed by the simplicial $\cP(k)$-module $C(R\#_\cP\Gamma/\cP(k),M\#_\cP
\Gamma)$. On the other hand it is not hard to check, using
\eqref{eq:covariant} and the definition of $\emb$-bundle, that the
diagonal action of $\emb$ on $C(R/k)$ descends to an action of
$\Gamma$ on $C(R/\cP(k))$. Hence we may also consider the bisimplicial
module $\perp(\Gamma/\cP, C(R/\cP(k),M))$ which results from applying
the functor $\perp(\Gamma/\cP,-)$ dimension-wise to the simplicial
module $C(R/\cP(k),M)$. The diagonal of this bisimplicial module is
\begin{multline*}
\diag(\perp(\Gamma/\cP, C(R/\cP(k),M)))_n=\\
\perp^n(\Gamma/\cP, C_n(R/\cP(k),M))=\left(M\otimes_{\cP} R^{\otimes_{\cP(k)}
n}\right)_\cP\otimes_\cP \Gamma^{\otimes_\cP n},
\end{multline*}
with faces $\mu_i\partial_i$ and degeneracies $s_i\delta_i$. The
simplicial module \[\diag(\perp(\Gamma/\cP, C(R/\cP(k),M)))\] is a
model for the hyperhomology of $\Gamma/\cP$ with $C(R/\cP(k),M)$ coefficients.
Hence, if $\bH(\Gamma/\cP,C(R/\cP(k),M))$ is any other
such model, we have a quasi-isomorphism
\[
\bH(\Gamma/\cP,C(R/\cP(k),M))\weq\diag(\perp(\Gamma/\cP, C(R/\cP(k),M)).
\]
Observe that any element of $\diag(\perp(\Gamma/\cP, C(R/\cP(k),M)))_n$
can be written as a sum of congruence classes of elementary tensors
of the form
\begin{equation}\label{eq:eltens}
x=a_0\otimes a_1\otimes\dots\otimes a_n\otimes f_1\otimes
\dots\otimes f_{n},
\end{equation}
where $a_0\in M$, $a_i\in R$, and  $f_i\in\emb$ $(i\ge 1)$ are such that
\begin{gather*}
\epsilon_r(f_i)=\epsilon_l(f_{i+1})\quad (1\le i\le n-1),\\
a_j\epsilon_l(f_1)=a_j \quad(0\le j\le n).
\end{gather*}
Next we define a map
\[
\phi:\diag(\perp(\Gamma/\cP, C(R/\cP(k),M))\to
C(R\#_\cP\Gamma/\cP(k),M\#_\cP \Gamma).
\]
For $x$ as in \eqref{eq:eltens}, we put
\begin{equation}\label{eq:maphi}
\phi([x])=
[a_0\# f_1\otimes f_1^\dagger(a_1)\# f_2\otimes \dots\otimes
(f_1\cdots f_n)^\dagger(a_n)\#(f_1\cdots f_n)^\dagger].
\end{equation}
Here $[]$ denotes congruence class.

\begin{prop}\label{prop:phiso}
The assignment \eqref{eq:maphi} gives a simplicial isomorphism
\[\phi:\diag(\perp(\Gamma/\cP, C(R/\cP(k),M)))\iso C(R\#_\cP\Gamma/\cP(k),M\#_\cP \Gamma).\]
In particular, we have a quasi-isomorphism
\[
\bH(\Gamma/\cP,HH(R/\cP(k),M))\weq HH(R\#_\cP\Gamma/\cP(k),M\#_\cP \Gamma).
\]
\end{prop}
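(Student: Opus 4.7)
The plan is to prove the displayed simplicial isomorphism directly, and then deduce the quasi-isomorphism from the model for hyperhomology set up in the preceding paragraphs. I would organise the argument in four steps.

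\textbf{Step 1 (Well-definedness).} The source $\diag(\perp(\Gamma/\cP,C(R/\cP(k),M)))_n$ is the quotient of $M\otimes R^{\otimes n}\otimes\Gamma^{\otimes n}$ by the $\cP$-balancing of all the relative tensor products and by the Hochschild commutator quotient $(-)_\cP$; the target $C_n(R\#_\cP\Gamma/\cP(k),M\#_\cP\Gamma)$ is the quotient by the $\cP(k)$-balancing of the Hochschild tensor factors together with the $\cP$-balancing inside each crossed product factor. The first step is to check that \eqref{eq:maphi} respects both sets of relations. The key inputs here are the $\emb$-bundle identity $pa=p_*(a)$ for idempotents $p\in 2^\N$, the multiplication formula \eqref{eqprodcp}, and the covariance \eqref{eq:covariant}; these allow one to pass idempotents of $\cP$ through $\#$-symbols on the target so that the $\cP$-balancing on the right matches the $\cP$-balancing on the left, and similarly for the commutator relations.

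\textbf{Step 2 (Explicit inverse).} I would then construct a simplicial inverse $\psi$. Starting from a representative $[m_0\#U_{h_0}\otimes r_1\#U_{h_1}\otimes\cdots\otimes r_n\#U_{h_n}]$ in the target, one uses the twisted product \eqref{eqprodcp} and the relations $pa=p_*(a)$ to rewrite each factor so that the group part is a single element of $\emb$ of minimal support, and then sets $f_i:=h_{i-1}$ for $1\le i\le n$, absorbing the excess group element carried by $h_n$ into $a_n$ via the action of $(f_1\cdots f_n)^\dagger$. Comparison with \eqref{eq:maphi} shows $\phi\psi=\id$ and $\psi\phi=\id$, thanks to the compatibility conditions $\epsilon_r(f_i)=\epsilon_l(f_{i+1})$ and $a_j\epsilon_l(f_1)=a_j$ built into \eqref{eq:eltens}.

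\textbf{Step 3 (Simplicial compatibility).} Next I would verify that $\phi$ commutes with faces and degeneracies. The faces of the diagonal are compositions $\mu_i\partial_i$, where $\mu_i$ acts on the Hochschild direction and $\partial_i$ on the $\Gamma$-direction. For $0\le i\le n-1$ the two inner faces multiply adjacent $R$-factors and adjacent $\Gamma$-factors, and \eqref{eqprodcp} together with \eqref{eq:covariant} matches them exactly with the corresponding Hochschild face of $R\#_\cP\Gamma$ applied to $\phi$ of the element. The case that will require the most care, and which I expect to be the main obstacle, is the cyclic closure face: on the target this involves moving the last factor $(f_1\cdots f_n)^\dagger(a_n)\#(f_1\cdots f_n)^\dagger$ past the first factor $a_0\# f_1$, and its compatibility with the augmentation face $\partial_n$ (involving $\epsilon_l$) on the source is precisely why the final $\Gamma$-entry in \eqref{eq:maphi} is chosen to be $(f_1\cdots f_n)^\dagger$ and the final coefficient to be $(f_1\cdots f_n)^\dagger(a_n)$. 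Degeneracies $s_i\delta_i$ correspond to the insertion of $1\#1\in R\#_\cP\Gamma$, which is essentially formal.

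\textbf{Step 4 (Quasi-isomorphism conclusion).} The second assertion follows at once: the paragraph preceding the proposition already identifies $\diag(\perp(\Gamma/\cP,C(R/\cP(k),M)))$ as a model for $\bH(\Gamma/\cP,HH(R/\cP(k),M))$, and Example \ref{exa:hhp} identifies the homotopy of $C(R\#_\cP\Gamma/\cP(k),M\#_\cP\Gamma)$ with $HH_*(R\#_\cP\Gamma/\cP(k),M\#_\cP\Gamma)$; thus taking homotopy of the simplicial isomorphism $\phi$ yields the displayed quasi-isomorphism.
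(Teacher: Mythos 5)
Your Step 1 and Step 4 track the paper's proof, and your Step 3 is a reasonable (if unremarkable) check. The genuine gap is in Step 2, which is where the paper does all the real work. You propose to define $\psi$ on an arbitrary representative $[m_0\#U_{h_0}\otimes r_1\#U_{h_1}\otimes\cdots\otimes r_n\#U_{h_n}]$ by setting $f_i:=h_{i-1}$ and ``absorbing the excess group element carried by $h_n$ into $a_n$ via the action of $(f_1\cdots f_n)^\dagger$.'' But the excess is $g:=(h_0\cdots h_{n-1})h_n\in\emb$, which is in general \emph{not} an idempotent, so $U_g\notin\cP(k)$ and $U_g\notin R$: there is nothing to absorb it into, and you cannot commute it past the other tensor slots either, because $C(-/\cP(k),-)$ only imposes $\cP(k)$-commutator relations, not commutators with arbitrary elements of $\Gamma$. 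The formula \eqref{eq:maphi} produces precisely those tensors whose total group part $f_0\cdots f_n$ lies in $2^\N$, and that is a genuine constraint on representatives, not something that can be normalized away by local rewriting of individual factors.

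What is actually needed --- and what the paper proves --- is that every class in $C_n(R\#_\cP\Gamma/\cP(k),M\#_\cP\Gamma)$ admits a representative $y=a_0\#f_0\otimes\cdots\otimes a_n\#f_n$ with $f_0\cdots f_n\in 2^\N$. The argument is combinatorial: with $f=f_0\cdots f_n$ and $A=\{x\in\dom(f):f(x)=x\}$, one uses Zorn's lemma to choose $B\subset\dom(f)$ maximal with $f(B)\cap B=\emptyset$, sets $C=\dom(f)\setminus(A\sqcup B)$ so that $f(B)\subset C$ and $f(C)\subset B$, and then shows $[y]=[p_{\dom f}\,y\,p_{\dom f}]=[p_A\,y\,p_A]$; after restricting all the $f_i$ to $A$ the total product $g_0\cdots g_n=p_A$ is an idempotent. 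Only \emph{after} this surjectivity step is the inverse $\psi$ defined (on such good representatives), checked to be well-defined, and shown to satisfy $\psi\phi=\id$. Your plan skips this step entirely, and without it $\psi$ is not even defined on the whole target, so neither $\phi\psi=\id$ nor bijectivity of $\phi$ follows. Incidentally, the ``main obstacle'' you flag in Step 3 (the cyclic face) is routine once the formula is written; the real difficulty sits in the surjectivity argument you elided.
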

\begin{proof} First of all, we must check that \eqref{eq:maphi} gives a well-defined  simplicial homomorphism. To do this,
one checks first that formula \eqref{eq:maphi} defines a simplicial homomorphism
\[
\hat{\phi}:\diag(\perp(\Z[\emb], C(R,M)))\to C(R\#\emb,M\#\emb).
\]
Then one observes that it passes down to the quotient, inducing a
map $\phi:\diag(\perp(\Gamma/\cP, C(R/\cP(k),M)))\to
C(R\#_\cP\Gamma/\cP(k),M\#_\cP \Gamma)$. Next note that the image of
$\hat{\phi}$ is contained in the simplicial subgroup $$S\subset
C(R\#\emb,M\#\emb)$$ given in dimension $n$ by
\[
S_n=\mspan\{[a_0\#f_0\otimes\dots\otimes a_n\#f_n]: f_i\in \emb,\ \
a_i\in R,\ \ f_0\cdots f_n\in 2^\N\}.
\]
To prove that $\phi$ is surjective, we must show that $$S\to
C(R\#_\cP\Gamma/\cP(k),M\#_\cP \Gamma)$$ is surjective. Any element of
$C(R\#_\cP\Gamma/\cP(k),M\#_\cP \Gamma)$ can be written as a linear
combination of classes of elementary tensors of the form
\begin{equation}\label{elemtensor}
y=a_0\#f_0\otimes\dots\otimes a_n\#f_n,
\end{equation}
such that the following conditions are satisfied for $0\le i\le n-1$ and $0\le j\le n$:
\begin{equation}\label{condistensor}
\epsilon_r(f_i)=\epsilon_l(f_{i+1}),\quad
\epsilon_r(f_n)=\epsilon_l(f_0)\quad a_j=a_j\epsilon_l(f_j).
\end{equation}
Let $f=f_0\cdots f_n$; then $\dom(f)=\ran(f)=\ran(f_0)=\dom(f_n)$.
Let
\[
\N\supset A=\{x\in\dom(f):f(x)=x\}.
\]
If $A=\dom(f)$ then $f\in 2^\N$, and thus the element
\eqref{elemtensor} belongs to $S$. Otherwise, by Zorn's Lemma, there
exists $\emptyset\neq B\subset\dom(f)$ maximal with the property
that $f(B)\cap B=\emptyset$. Clearly $A\cap B=\emptyset$; let
$C=\dom(f)\backslash(A\sqcup B)$. Then $f(B)\subset C$, $f(C)\subset
B$, and $p_{\dom(f)}=p_A+p_B+p_C$. Hence we have
\[
[y]=[p_{\dom(f)}yp_{\dom(f)}]=[p_Ayp_A]=[a_0\#
g_0\otimes\dots\otimes a_n\#g_n],
\]
for $g_n=(f_n)_{|A}$ and $g_i=(f_i)_{|f_{i+1}\cdots f_n(A)}$ $(0\le
i\le n-1)$. In particular $g_0\cdots g_n=p_A$. Thus $\phi$ is
surjective. To prove it is injective, define a map
\[
\psi:C(R\#_\cP\Gamma/\cP(k),M\#_\cP \Gamma)\to \diag(\perp(\Gamma/\cP, C(R/\cP(k),M)))
\]
as follows. For $y$ as in \eqref{elemtensor} satisfying the
conditions \eqref{condistensor} and such that $f_0\cdots f_n\in
2^\N$, put
\[
\psi([y])= [a_0\otimes f_0(a_1)\otimes\dots\otimes (f_0\cdots
f_{n-1})(a_n)\otimes f_0\otimes\dots\otimes f_{n-1}].
\]
One checks that $\psi$ is well-defined and that $\psi\phi=id$.
\end{proof}

\begin{coro}\label{cor:hh0}
Assume that $R$ is commutative and that $M$ is a central
$R$-bimodule. Then
\[
HH_0(R\#_\cP\Gamma,M\#_\cP\Gamma)=M_\cE.
\]
\end{coro}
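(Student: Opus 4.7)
The plan is to deduce the corollary from Proposition~\ref{prop:phiso} by an $H_0$ computation. By that proposition we have a quasi-isomorphism
\[
\bH(\Gamma/\cP,HH(R/\cP(k),M))\weq HH(R\#_\cP\Gamma/\cP(k),M\#_\cP\Gamma).
\]
The hyperhomology on the left carries a standard first-quadrant spectral sequence
\[
E^2_{p,q}=H_p(\Gamma/\cP,HH_q(R/\cP(k),M))\Rightarrow HH_{p+q}(R\#_\cP\Gamma/\cP(k),M\#_\cP\Gamma).
\]
In total degree zero only the corner term $E^2_{0,0}$ contributes, and it is neither the source nor the target of any non-trivial differential, so
\[
HH_0(R\#_\cP\Gamma/\cP(k),M\#_\cP\Gamma)=H_0(\Gamma/\cP,HH_0(R/\cP(k),M)).
\]

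Next I would evaluate the inner group. Since $R$ is commutative and $M$ is central as an $R$-bimodule, $[R,M]=0$, so $HH_0(R/\cP(k),M)=M/[R,M]=M$. Proposition~\ref{prop:ce=estar} then yields $H_0(\Gamma/\cP,M)=M_\cE$.

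To conclude, I would note that $HH_0(B/A,N)=N/[B,N]$ does not depend on the base ring $A$; hence $HH_0(R\#_\cP\Gamma,M\#_\cP\Gamma)$ --- which by the convention of Subsection~\ref{subsec:HH} means $HH_0$ over $\Z$ --- agrees with the $HH_0$ over $\cP(k)$ that we just computed. This yields the desired identification $HH_0(R\#_\cP\Gamma,M\#_\cP\Gamma)=M_\cE$. I do not foresee a genuine obstacle: once Proposition~\ref{prop:phiso} is available, the corollary is essentially a direct $E^2_{0,0}$ readout, with centrality of $M$ as an $R$-bimodule being the one hypothesis that collapses the inner $HH_0$ to $M$ itself.
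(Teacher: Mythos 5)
Your proof is correct and follows essentially the same route as the paper's: read off $H_0$ from the quasi-isomorphism of Proposition~\ref{prop:phiso}, identify $HH_0(R,M)=M$ using commutativity of $R$ and centrality of $M$, and then apply Proposition~\ref{prop:ce=estar}. The only difference is that you spell out the $E^2_{0,0}$ spectral-sequence readout and the base-ring independence of $HH_0$, both of which the paper leaves implicit.
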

\begin{proof}
By Proposition \ref{prop:phiso},
\[HH_0(R\#_\cP\Gamma,M\#_\cP\Gamma)=H_0(\Gamma/\cP,HH_0(R,M)).\]
By our assumptions on $R$ and $M$, $HH_0(R,M)=M$. Finally we have
$H_0(\Gamma/\cP,M)=M_\cE$, by Proposition \ref{prop:ce=estar}.
\end{proof}

\subsection{Comparing the \topdf{$0^{th}$-homology}{zero homology} of \topdf{$(\Gami,I_S)$}{Gamic, IS} and
that of \topdf{$(\cB:J_S)$}{B:J}}

\begin{prop}\label{prop:hh0m}
Let $S\triqui\elli$ be a symmetric ideal and let
$J_S\triqui\cB=\cB(\ell^2)$ be the corresponding ideal of bounded
operators in $\ell^2$. Then the inclusion $\Gami\subset\cB$ induces
an isomorphism
\[
HH_0(\Gami,I_S)\iso HH_0(\cB,J_S).
\]
\end{prop}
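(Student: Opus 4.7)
\medskip

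The plan is to compute both sides explicitly as the group $S_\cE$ and verify that the map induced by the inclusion intertwines the two identifications. For the left-hand side, I would specialize Corollary~\ref{cor:hh0} to $R=\elli$ (which is commutative) and $M=S$ (a central $\elli$-bimodule, which by Proposition~\ref{prop:cpg} satisfies $I_S=S\#_\cP\Gamma$ and $\Gami=\elli\#_\cP\Gamma$). This yields a canonical identification
\[
HH_0(\Gami,I_S)=S_\cE,
\]
and, tracing through the isomorphism $\phi$ of Proposition~\ref{prop:phiso} in degree zero, the class of $\alpha\in S$ corresponds to the class of $\diag(\alpha)\in I_S$ modulo $[\Gami,I_S]$.

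For the right-hand side, I would invoke the computation of $HH_0(\cB,J_S)=J_S/[\cB,J_S]$ cited by the paper as \cite{kenetal2}*{Theorem 5.12} (appearing in \eqref{intro:agree}), which gives a natural isomorphism
\[
HH_0(\cB,J_S)\cong S_\cE
\]
likewise induced by the diagonal embedding $\alpha\mapsto\diag(\alpha)$ of $S$ into $J_S$. Both identifications are thus defined on the subspace of diagonal operators, and the diagonal of an element of $S$ lies in $\Gami\cap\cB$.

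It then remains to observe that the inclusion $\Gami\subset\cB$ sends $\diag(\alpha)\in I_S$ to the same operator $\diag(\alpha)\in J_S$, so the diagram
\[
\xymatrix{
S_\cE \ar[r]^-{=} \ar[d]_-{\cong} & S_\cE \ar[d]^-{\cong} \\
HH_0(\Gami,I_S) \ar[r] & HH_0(\cB,J_S)
}
\]
in which the vertical arrows are the two computations above and the top arrow is the identity, commutes. Surjectivity of each vertical arrow --- i.e., the fact that every class in the relative $HH_0$ can be represented by a diagonal element --- then forces the bottom horizontal map to be an isomorphism.

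The main obstacle I expect is the compatibility verification: one must ensure that the two identifications with $S_\cE$ really are induced by the same diagonal inclusion. For the left-hand side this is explicit from \eqref{eq:maphi} at simplicial level $0$, where $\phi$ sends $[a_0]$ to $[a_0\#1]=[\diag(a_0)]$. For the right-hand side it is the content of the cited theorem in \cite{kenetal2}, which identifies the quotient $J_S/[\cB,J_S]$ with $S_\cE$ via the "truncated trace"/diagonal map; once this is recalled, the diagram above commutes tautologically and the conclusion follows.
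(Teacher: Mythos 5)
Your proof is correct and follows essentially the same route as the paper: both identify $HH_0(\Gami,I_S)$ with $S_\cE$ via Corollary~\ref{cor:hh0} (i.e., Proposition~\ref{prop:cpg} and the crossed product description) and then invoke \cite{kenetal2}*{Theorem 5.12} to recognize the composite $S_\cE\to HH_0(\Gami,I_S)\to HH_0(\cB,J_S)$, induced by the diagonal inclusion, as an isomorphism. Your extra care about tracing $\phi$ at simplicial level $0$ and displaying the commutative square is just an unwinding of the same two-step argument.
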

\begin{proof}
By Proposition \ref{prop:cpg} Corollary \ref{cor:hh0}, the inclusion
$\diag:S\to I_S$ descends to a bijection
\begin{equation}\label{map:stohh0}
S_\cE\iso HH_0(\Gami,I_S).
\end{equation}
By \cite{kenetal2}*{Theorem 5.12} the composite of
\eqref{map:stohh0} with the map induced by the inclusion $I_S\subset
J_S$ is an isomorphism.
\end{proof}

\begin{coro}\label{coro:hc0}
The map $HC_0(\Gami:I_S)\to HC_0(\cB:J_S)$ is an isomorphism.
\end{coro}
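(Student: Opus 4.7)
The plan is to identify the relative $HC_0$ groups with the Hochschild groups with ideal coefficients computed in Proposition~\ref{prop:hh0m}, and then invoke that proposition directly.

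First I would recall the general fact that for any unital ring $A$ and two-sided ideal $I\triqui A$, there is a natural isomorphism $HC_0(A:I)\cong HH_0(A,I)=I/[A,I]$. Two short observations give this. The Connes SBI sequence collapses in degree zero because $HC_{-1}$ and $HC_{-2}$ vanish, giving $HC_0=HH_0$ for any algebra; since this identification is compatible with the fiber sequence defining the relative theory, $HC_0(A:I)=HH_0(A:I)$. On the other hand, the relative Hochschild complex $C_*(A:I)=\ker(C_*(A)\to C_*(A/I))$ has $C_0(A:I)=I$ and $C_1(A:I)=I\otimes A+A\otimes I$, and the differential $b(a\otimes b)=ab-ba$ has image exactly $[A,I]$. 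Thus $HH_0(A:I)=I/[A,I]=HH_0(A,I)$ as asserted.

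Applying these identifications naturally to the two pairs $(\Gami,I_S)$ and $(\cB,J_S)$ produces a commutative square
\[
\xymatrix{
HC_0(\Gami:I_S)\ar[r]\ar[d]_{\cong}& HC_0(\cB:J_S)\ar[d]^{\cong}\\
HH_0(\Gami,I_S)\ar[r]_{\cong}& HH_0(\cB,J_S)
}
\]
in which the bottom arrow is the isomorphism of Proposition~\ref{prop:hh0m}. Hence the top arrow is an isomorphism as well.

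The corollary itself presents no substantive obstacle; the real work has already been done in Proposition~\ref{prop:hh0m}, where the crossed-product description of $I_S$ from Proposition~\ref{prop:cpg}, the identification $HH_0(R\#_\cP\Gamma,M\#_\cP\Gamma)=M_\cE$ of Corollary~\ref{cor:hh0}, and Wodzicki's formula \cite{kenetal2}*{Theorem 5.12} combine to produce the relevant isomorphism. The present corollary only requires the formal identification of relative $HC_0$ with Hochschild homology of $A$ with coefficients in the ideal.
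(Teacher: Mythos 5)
Your argument is correct and is essentially the same as the paper's: both reduce the claim to Proposition~\ref{prop:hh0m} via the standard identification $HC_0(R:I)=HH_0(R:I)=I/[R,I]$ for a unital ring $R$ and ideal $I$. You simply spell out the justification of that identification (SBI collapse, explicit low-degree computation of the relative complex) in more detail than the paper does.
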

\begin{proof}
It follows from Proposition \ref{prop:hh0m} and the fact that, if
$R$ is a unital ring and $I\triqui R$ is an ideal then
$$HH_0(R:I)=HC_0(R:I)=I/[R,I].$$
\end{proof}

\begin{lem}\label{lem:previo}
Let $p>0$. Then:
\begin{gather*}
HC_0(\Gami:I_{\ell^{p+}})=\left\{\begin{matrix}\C & p<1\\ 0 & p\ge 1\end{matrix}\right.\\
HC_0(\Gami:I_{\ell^{p-}})=\left\{\begin{matrix}\C & p\le 1\\ 0&
p>1\end{matrix}\right.\\
HC_0(\Gami:I_{\ellp})=\left\{
\begin{matrix}\C & p<1\\ \C\oplus \V & p=1 \\0& p>1.\end{matrix}
\right.\\
\end{gather*}
Here $\V$ is a $\C$-vector space of uncountable dimension.
\end{lem}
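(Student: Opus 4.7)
The plan is to reduce everything to Wodzicki's classical calculation of $HC_0(\cB:J_S)$ for Schatten-type ideals and then import his values.

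First, Corollary \ref{coro:hc0} makes the canonical map $HC_0(\Gami:I_S) \to HC_0(\cB:J_S)$ an isomorphism for every symmetric ideal $S \triqui \elli$. Since, as recalled in the proof of that corollary, $HC_0(R:I) = I/[R,I]$ for any unital ring $R$ and two-sided ideal $I\triqui R$, the problem is reduced to computing $J_S/[\cB, J_S]$ for $S \in \{\ell^p, \ell^{p+}, \ell^{p-}\}$. By Proposition \ref{prop:hh0m} together with the $\diag$-isomorphism $S_\cE \iso HH_0(\Gami, I_S)$ established through Corollary \ref{cor:hh0}, this quotient is in turn isomorphic to $S_\cE$ in the notation of Section \ref{subsec:gammasuple}. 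So the task becomes the computation of $S_\cE$ for the three families of ideals listed.

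Second, I will treat the ``surjection'' half of the statement directly. The sum functional $\alpha \mapsto \sum_n \alpha_n$ is defined on every $S \subseteq \ell^1$ and is $\cE$-invariant, because any $f \in \cE$ acts by an injective reshuffling of the entries of $\alpha$; hence it descends to a nonzero functional on $S_\cE$, providing the $\C$ summand in the cases where $S\subseteq\ell^1$, namely $\ell^{p+}$ for $p<1$, $\ell^{p-}$ for $p\le 1$, and $\ell^p$ for $p<1$. The additional uncountable-dimensional summand $\V$ in the case $S=\ell^1$ comes from Wodzicki's construction in \cite{wodk} of an uncountable family of linearly independent \emph{singular} (Dixmier-type) traces on $\ell^1$ that vanish on rapidly decaying sequences; these are again manifestly $\cE$-invariant and manifestly linearly independent from the ordinary trace.

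Third, and this is the hard half, I will invoke Wodzicki's vanishing results from \cite{wodk} for the remaining ideals: $S_\cE=0$ for $S=\ell^{p+}$ with $p\ge 1$, $S=\ell^{p-}$ with $p>1$, and $S=\ell^p$ with $p>1$. Concretely this requires exhibiting every element of $S$ as a $\Z$-linear combination of differences $\beta - f_*(\beta)$ with $f\in\cE$, which Wodzicki accomplishes through a quantitative telescoping argument with careful control of Schatten-type norms and singular values. This is the genuine obstacle; I would simply cite \cite{wodk} rather than reprove it, since the novelty here is the structural reduction via Corollary \ref{coro:hc0} rather than the analytic computation on the $\cB$ side.
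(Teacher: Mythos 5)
Your proof takes essentially the same route as the paper: Corollary \ref{coro:hc0} reduces the computation to Wodzicki's tabulation of $HC_0(\cB:J_S)$ on pages 492--493 of \cite{wodk}, which is then cited. The only quibble is with your non-load-bearing gloss on the origin of the summand $\V$ for $S=\ell^1$: Dixmier traces live on the larger ideal $\Ell^{1,\infty}$ and vanish on trace class operators, so $\V$ is better understood as measuring the gap between $\ker(\mathrm{tr})$ and the commutator subspace $[\cB,\Ell^1]$ as characterized in \cite{kenetal2}; likewise your sum-functional argument gives only the lower bound $\dim S_\cE\ge 1$, with the sharp computation still coming from the citation.
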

\begin{proof} It follows from Corollary \ref{coro:hc0} and
\cite{wodk}*{pages 492-493}.
\end{proof}

\subsection{Cyclic homology of \topdf{$R\#_\cP\Gamma$}{Crossed prod}}\label{subsec:HCRbundle}

\goodbreak

Now we go back to the general situation of Subsection \ref{subsec:HRbundle}. So $k$ is a field and  $R$ is an $\emb$-bundle over $k$.
Let $M$ be a right $\Gamma$-module. Consider the simplicial module
$\perp (\Gamma/\cP,M)$. Every element of $\perp_n(\Gamma/\cP,M)$ can
be written as a sum of elementary tensors
\[
x=m\otimes f_1\otimes\dots\otimes f_n
\]
with $m\in M$, $f_i\in\emb$, and $\dom(f_i)=\ran(f_{i+1})$ $(i<n)$.
For $x$ as above, put
\begin{equation}\label{map:tau}
\tau_n(x)=(-1)^n m(f_1\cdots f_n)\otimes (f_1\cdots
f_n)^\dagger\otimes f_1\otimes\dots\otimes f_{n-1}.
\end{equation}
One checks that the assignment \eqref{map:tau} gives a well-defined
endomorphism of $\perp_n(\Gamma/\cP,M)$, and that the cyclic
identities \cite{lod}*{2.5.1.1} hold. Thus the simplicial
($k$-)module $\perp(\Gamma/\cP,M)$, equipped with the cyclic
operators $\tau_n$ ($n\ge 0$), is a \emph{cyclic module}. In general
if $\cC$ is any cyclic module, then we can equip $\cC$ with a map
$B:\cC\to \cC[+1]$ called the Connes' operator, which, together with
the usual boundary $b:\cC\to \cC[-1]$ given by the alternating sum
of the face maps, satisfy $b^2=B^2=[b,B]=0$. When $\cC=\perp (\Gamma/\cP,M)$, we write
$\partial$ and $\cB$ for the operators $b$ and $B$.
The \emph{Hochschild
complex} of a cyclic module $\cC$ is $HH(\cC)=(\cC,b)$. The \emph{cyclic} and
\emph{negative cyclic} complexes are the complexes given in
dimension $n$ by $HC(\cC)_n=\bigoplus_{m\ge 0} \cC_{n-2m}$ and
$HN(\cC)_n=\prod_{m\ge 0} \cC_{n+2m}$; they are equipped with the
boundary $b+B$. Observe that $HC(\cC)$ is also equipped with a chain
map $S:HC(\cC)\to HC(\cC)[-2]$ defined by the obvious projections
$HC(\cC)_n\to HC(\cC)_{n-2}$. If $C$ is another chain complex
equipped with a chain map $S:C\to C[-2]$, then by a \emph{map of
$S$-complexes} $C\to HC(\cC)$ we understand a chain map which
commutes with $S$. 
\begin{prop}\label{prop:perturb}
There is a natural quasi-isomorphism of $S$-complexes $(HC(\perp
(\Gamma/\cP,M)),\partial)\to (HC(\perp
(\Gamma/\cP,M)),\partial+\cB)$.
\end{prop}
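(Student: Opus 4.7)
The approach is homological perturbation. I would first show that Connes' operator $\cB$ is $\partial$-null-homotopic on the simplicial module $\perp(\Gamma/\cP,M)$, and then use the homotopy to construct an explicit chain map $\Phi$ realizing the asserted quasi-isomorphism.

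\emph{Step 1 (Null-homotopy of $\cB$).} I would construct a degree-$(+2)$ operator
$$\cH\colon \perp_n(\Gamma/\cP,M)\to\perp_{n+2}(\Gamma/\cP,M)$$
satisfying $\partial\cH-\cH\partial=\cB$. The key tool is the extra degeneracy
$$s_{-1}(m\otimes f_1\otimes\cdots\otimes f_n)=m\otimes 1\otimes f_1\otimes\cdots\otimes f_n$$
associated to the section $\cP\subset\Gamma$ of the augmentation $\epsilon_l$ from Subsection \ref{subsec:suple}. Writing Connes' formula $\cB=(1-\tau)s_0 N$ (for $s_0$ the first degeneracy, $\tau$ the cyclic operator \eqref{map:tau}, and $N$ the norm), I would express $\cH$ as a $\Z$-linear combination of compositions of $s_{-1}$, $s_0$ and powers of $\tau$, then verify $\partial\cH-\cH\partial=\cB$ by a direct calculation with the simplicial-cyclic identities. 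The underlying reason this is possible is that $s_{-1}$ supplies a contracting homotopy for the augmented bar complex $\perp(\Gamma/\cP,M)\to M\otimes_\Gamma\cP$.

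\emph{Step 2 (Chain map via perturbation).} Promote $\cH$ to a degree-$0$ operator $\widehat\cH$ on $HC(\perp)$ by applying it summandwise and reinterpreting the output $\cH(x)\in\perp_{n-2m+2}$ as an element of the $(m-1)$-st summand of $HC_n$. Then $\widehat\cH$ strictly lowers the summand-index, the relation from Step 1 lifts to $[\partial,\widehat\cH]=\cB$ on $HC(\perp)$, and $\widehat\cH$ commutes with $S$ since both act by the shift $m\mapsto m-1$. Define
$$\Phi:=\exp(-\widehat\cH)=\sum_{k\ge 0}\frac{(-1)^k}{k!}\widehat\cH^k,$$
which is finite on each element, as $\widehat\cH^k$ kills the $m$-th summand once $k>m$. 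Assuming the mild commutation $[\cB,\widehat\cH]=0$, the formal identity $[\partial,\exp(-\widehat\cH)]=-\cB\exp(-\widehat\cH)$ gives $(\partial+\cB)\Phi=\Phi\partial$ directly; otherwise the Homological Perturbation Lemma (Gugenheim--Lambe--Stasheff) produces a corrected formula, involving iterated commutators of $\cB$ and $\widehat\cH$, that still yields a chain map. Commutation with $S$ is immediate from that of $\widehat\cH$, so $\Phi$ is a map of $S$-complexes.

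\emph{Step 3 (Quasi-isomorphism).} Filter both sides by $F_pHC_n=\bigoplus_{0\le m\le p}\perp_{n-2m}$. Both $\partial$ and $\partial+\cB$ preserve this filtration (with $\cB$ strictly lowering it), and $\Phi-\mathrm{id}$ strictly lowers it as well. Hence on the associated graded $\Phi$ reduces to the identity. Since the filtrations are exhaustive with $F_{-1}=0$, the comparison theorem for bounded-below spectral sequences implies that $\Phi$ is a quasi-isomorphism.

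\emph{Main obstacle.} The delicate part is Step 1: producing the explicit $\cH$ and verifying $\partial\cH-\cH\partial=\cB$. The cyclic operator $\tau_n$ in \eqref{map:tau} is atypical --- it involves both the right $\Gamma$-action $m\cdot(f_1\cdots f_n)$ on $M$ and the inversion $(f_1\cdots f_n)^\dagger$ from $\emb$ --- so, although the existence of a contracting homotopy for the augmented bar complex is formal, computing Connes' $\cB$ explicitly and identifying it as a $\partial$-commutator in this setting requires careful bookkeeping with the simplicial and cyclic relations.
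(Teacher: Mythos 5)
Your Steps 2 and 3 are sound and in fact parallel the paper's argument closely: the paper constructs a chain map $\Upsilon\colon HH(\cC)\to HN(\cC)$ with $\pi\Upsilon=1$ (following \cite{agree}*{\S 3.1}), and writing $\Upsilon=(\Upsilon_m)_{m\ge 0}$ with $\Upsilon_0=\mathrm{id}$, the chain-map condition $(\partial+\cB)\Upsilon=\Upsilon\partial$ in the first nontrivial component reads $\Upsilon_1\partial-\partial\Upsilon_1=\cB$, so $\Upsilon_1$ is, up to sign, precisely your homotopy $\cH$. The paper's series $\sum_n\theta^n\Upsilon$ plays the role of your $\exp(-\widehat{\cH})$, and the filtration argument is the same. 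So the overall design is correct, and Step 1 is, as you yourself flag, where all the content lives.

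The problem is that your justification for Step 1 is false, and no actual homotopy is produced. The map you call $s_{-1}$, sending $m\otimes f_1\otimes\cdots\otimes f_n$ to $m\otimes 1\otimes f_1\otimes\cdots\otimes f_n$, is not an ``extra'' degeneracy coming from the section $\cP\subset\Gamma$: it is simply $\delta_0$, one of the ordinary degeneracies of $\perp(\Gamma/\cP,M)$ as defined in Subsection~\ref{subsec:suple}, present in any simplicial module and making no use of the section. A direct check with the simplicial identities ($\partial_0\delta_0=\partial_1\delta_0=\mathrm{id}$ and $\partial_i\delta_0=\delta_0\partial_{i-1}$ for $i\ge 2$) gives $\partial\delta_0+\delta_0\partial=\delta_0\partial_0$, not the identity, so $\delta_0$ does not contract the augmented complex. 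More fundamentally, the augmented complex $\perp(\Gamma/\cP,M)\to M\otimes_\Gamma\cP$ cannot be contractible for general $M$: its homology is $H_*(\Gamma/\cP,M)=\tor_*^\Gamma(M,\cP)$, which is generally nonzero and is exactly what the paper goes on to compute. (The genuine extra degeneracy $x\mapsto 1\otimes x$ exists only when the coefficient module is $\Gamma$ itself, i.e.\ for the bar resolution $P(\Gamma/\cP)$.) Thus the mechanism that makes $\cB$ null-homotopic on $\perp(\Gamma/\cP,M)$ is not the one you invoke; it is supplied by the construction of \cite{agree}*{\S 3.1}, which you neither cite nor reproduce, and you do not write down an explicit $\cH$ nor verify $[\partial,\cH]=\cB$. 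Without that, the proof does not go through.
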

\begin{proof}
View $\cC=\perp(\Gamma/\cP,M)$ as a cyclic module. Consider the
projection
\[
\pi:HN(\cC)_n=\prod_{m\ge 0}\cC_{n+2m}\to \cC_n=HH(\cC)_n.
\]
Observe that $\pi(b+\cB)=b\pi$. Proceed as in \cite{agree}*{\S 3.1}
to define a chain map $\Upsilon:HH(\cC)\to HN(\cC)$ such that
$\pi\Upsilon=1$. We have a chain map $\theta^{n}:HN(\cC)\to
HC(\cC)[2n]$ ($n\ge 0$) given by the composite
\begin{multline*}
\theta^n: HN(\cC)_p=\prod_{m\ge 0}\cC_{p+2m}\twoheadrightarrow
\bigoplus_{m=0}^n\cC_{p+2m}\\
\subset \bigoplus_{q\ge 0}\cC_{p+2(n-q)}=HC(\cC)_{p+2n}.
\end{multline*}
The map of the proposition is
\[
\sum_{n=0}^\infty\theta^n\Upsilon:(HC(\cC),\partial)=\bigoplus_{n\ge
0}HH(\cC)[-2n]\to (HC(\cC),b+\cB).
\]
\end{proof}

\begin{thm}\label{thm:hccross}
Let $k$ be a field and $R$ an $\emb$-bundle over $k$. There is a natural zig-zag of
quasi-isomorphisms
\[
\bH(\Gamma/\cP,HC(R/\cP(k)))\weq HC(R\#_\cP\Gamma/k).
\]
\end{thm}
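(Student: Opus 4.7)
The plan is to upgrade the simplicial isomorphism $\phi$ of Proposition~\ref{prop:phiso} (with $M=R$) to an isomorphism of cyclic modules, and then combine this with Proposition~\ref{prop:perturb} and a cyclic Eilenberg--Zilber argument to identify cyclic homology of the diagonal with the hyperhomology.

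First I would verify that the formula \eqref{eq:maphi}, which defines a simplicial isomorphism
\[
\phi\colon\diag\bigl(\perp(\Gamma/\cP,C(R/\cP(k)))\bigr)\iso C(R\#_\cP\Gamma/\cP(k)),
\]
intertwines appropriate cyclic operators on both sides. The target carries the standard cyclic operator of a Hochschild complex. On the source, the diagonal inherits a cyclic structure by combining the cyclic operator of the cyclic module $C(R/\cP(k))$ with the cyclic operator $\tau_n$ of \eqref{map:tau}, the latter applied to each $C_p(R/\cP(k))$ regarded as a $\Gamma$-module. Compatibility with $\phi$ amounts to a direct manipulation of \eqref{eqprodcp} and the identity $(f_1\cdots f_n)^\dagger(f_1\cdots f_n)=\epsilon_r(f_1\cdots f_n)$. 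Passing to cyclic complexes then yields
\[
HC\bigl(\diag\perp(\Gamma/\cP,C(R/\cP(k)))\bigr)\cong HC(R\#_\cP\Gamma/\cP(k)),
\]
and Example~\ref{exa:hhp} combined with the naturality of the Connes operator in the mixed-complex construction of $HC$ shows that the right-hand side is further quasi-isomorphic to $HC(R\#_\cP\Gamma/k)$.

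Next I would identify the left-hand side with $\bH(\Gamma/\cP,HC(R/\cP(k)))$. View $\perp(\Gamma/\cP,C(R/\cP(k)))$ as a bi-cyclic object: its vertical cyclic structure is the $\Gamma$-equivariant cyclic structure of $C(R/\cP(k))$ from Subsection~\ref{subsec:HRbundle}, and its horizontal cyclic structure is provided level-wise by Proposition~\ref{prop:perturb}. A cyclic Eilenberg--Zilber argument identifies $HC$ of the diagonal with the total $HC$ of this bi-cyclic object. Applying Proposition~\ref{prop:perturb} in the horizontal direction then replaces the horizontal $(b+B)$-differential by the simplicial differential alone, so the total collapses to the total complex of $\perp(\Gamma/\cP,HC(R/\cP(k)))$. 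Using Proposition~\ref{prop:gamaflatp} (flatness of $\Gamma$ over $\cP$), this last complex is a standard model for $\bH(\Gamma/\cP,HC(R/\cP(k)))$.

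The main obstacle will be the cyclic Eilenberg--Zilber step, since the two cyclic structures on $\perp(\Gamma/\cP,C(R/\cP(k)))$ arise from different sources --- one from the algebra $R$, the other from the $\Gamma$-action --- so there may not be an entirely off-the-shelf statement that fits verbatim. My fallback strategy would be to mimic the explicit perturbation construction in the proof of Proposition~\ref{prop:perturb}, producing the required quasi-iso zig-zag by hand and verifying compatibility via a filtration-based spectral sequence comparison between the two total complexes.
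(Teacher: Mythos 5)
Your proposal is correct and follows essentially the same route as the paper: form the bicyclic module $\cC_{*,*}:([m],[n])\mapsto\perp_m(\Gamma/\cP,C_n(R/\cP(k)))$, use Proposition~\ref{prop:perturb} to replace the horizontal $(b+B)$-differential by the simplicial one, apply a cyclic Eilenberg--Zilber theorem, upgrade Proposition~\ref{prop:phiso} to a cyclic isomorphism, and finish with Example~\ref{exa:hhp}. The worry you flag about the Eilenberg--Zilber step is resolved exactly as you suggest it might be: the paper invokes the cylindrical Eilenberg--Zilber theorem of Khalkhali--Rangipour (\cite{khal}*{Theorem 3.1}), which applies to the bicyclic module above and yields the $S$-equivalence between its total cyclic complex and the cyclic complex of its diagonal.
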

\begin{proof}
Consider the bicyclic module
\begin{equation}\label{bicyc}
\cC_{*,*}: ([m],[n])\mapsto\perp_m (\Gamma/\cP,C_n(R/\cP(k))).
\end{equation}
It follows from Proposition \ref{prop:perturb} that the total cyclic
complex \[T=(HC(\cC_{*,*}),b+\partial+B+\cB)\] is quasi-isomorphic
to
\[
(HC(\cC_{*,*}),b+\partial+B),
\]
which in turn is a model for $\bH(\Gamma/\cP,HC(R/\cP(k)))$. By the
cylindrical version of the Eilenberg-Zilber theorem
(\cite{khal}*{Theorem 3.1}), the complex $T$ is $S$-equivalent to
the $HC$-complex of the diagonal $\Delta$ of \eqref{bicyc}. By
Proposition \eqref{prop:phiso}, the map \eqref{eq:maphi} is an
isomorphism of simplicial modules $\Delta\iso C(R\#_\cP\Gamma/\cP(k))$; one
checks that it is actually an isomorphism of cyclic modules.
Finally, by Example \ref{exa:hhp}, the projection $C(R\#_\cP\Gamma/k)\to
C(R\#_\cP\Gamma/\cP(k))$ induces a quasi-isomorphism 
\begin{equation}\label{map:quisintro}
HC(R\#_\cP\Gamma/k)\to
HC(R\#_\cP\Gamma/\cP(k)).
\end{equation}
\end{proof}

\begin{coro}\label{coro:hccross}
Let $\fA$ be a bornological algebra and $S\triqui\elli$ a symmetric ideal. Then
\[
HC_*(\Gami(\fA):I_{S(\fA)})=\bH_*(\Gamma/\cP:HC((\elli(\fA):S(\fA))/\cP)).
\]
\end{coro}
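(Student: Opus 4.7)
The plan is to deduce the corollary from Theorem \ref{thm:hccross} by applying it naturally to both $R = \elli(\fA)$ and $R = \elli(\fA)/S(\fA)$ (viewed as $\emb$-bundles over a base field $k$, e.g.\ $k=\C$ or $\Q$) and then passing to homotopy fibers. Since $S$ is symmetric, $S(\fA)$ is $\emb$-invariant, so $\elli(\fA)/S(\fA)$ inherits the structure of an $\emb$-bundle; moreover, by Proposition \ref{prop:cpg} and the right exactness of $-\#_\cP\Gamma$, we have a short exact sequence
\[
0 \to I_{S(\fA)} \to \Gami(\fA) \to \bigl(\elli(\fA)/S(\fA)\bigr)\#_\cP\Gamma \to 0,
\]
so the quotient $\Gami(\fA)/I_{S(\fA)}$ is itself a crossed product of the kind to which Theorem \ref{thm:hccross} applies.

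Next, I would recall that the relative cyclic complex $HC(\Gami(\fA):I_{S(\fA)})$ is by construction the homotopy fiber of the projection $HC(\Gami(\fA)/k) \to HC((\Gami(\fA)/I_{S(\fA)})/k)$, and similarly $HC((\elli(\fA):S(\fA))/\cP)$ is the homotopy fiber of $HC(\elli(\fA)/\cP(k)) \to HC((\elli(\fA)/S(\fA))/\cP(k))$. Applying Theorem \ref{thm:hccross} to the morphism $\elli(\fA) \to \elli(\fA)/S(\fA)$ of $\emb$-bundles yields a square of quasi-isomorphisms
\[
\xymatrix{
\bH(\Gamma/\cP,HC(\elli(\fA)/\cP(k))) \ar[r]\ar[d]^{\sim} & \bH(\Gamma/\cP,HC((\elli(\fA)/S(\fA))/\cP(k))) \ar[d]^{\sim} \\
HC(\Gami(\fA)/k) \ar[r] & HC((\Gami(\fA)/I_{S(\fA)})/k).
}
\]
Taking homotopy fibers of the rows gives the claimed quasi-isomorphism, since the hyperhomology functor $\bH(\Gamma/\cP,-)$, being the homotopy of the simplicial bar resolution $\perp(\Gamma/\cP,-)$ applied dimensionwise, preserves fiber sequences of chain complexes.

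The one genuine step requiring care is the \emph{naturality} of the quasi-isomorphism constructed in Theorem \ref{thm:hccross}: all the ingredients in its proof, namely the isomorphism $\phi$ of Proposition \ref{prop:phiso}, the perturbation map of Proposition \ref{prop:perturb}, the cylindrical Eilenberg-Zilber map, and the quasi-isomorphism of \eqref{map:quisintro}, must be checked to be functorial in the $\emb$-bundle $R$. This is a direct inspection of formulas: the map \eqref{eq:maphi} is visibly functorial in $R$ (and in the coefficient bimodule $M$, here taken to be $R$ itself), and the $\Upsilon$ construction together with the Eilenberg-Zilber machinery depends only on the simplicial structure, not on $R$. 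Once this functoriality is in hand, the remainder is formal manipulation of homotopy fibers and the exactness of $\bH(\Gamma/\cP,-)$, so the main obstacle is really notational rather than mathematical.
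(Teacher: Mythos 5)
Your proof is correct and follows essentially the same route as the paper's: invoke the crossed-product identifications $\Gami(\fA)=\elli(\fA)\#_\cP\Gamma$, $I_{S(\fA)}=S(\fA)\#_\cP\Gamma$ from Proposition \ref{prop:cpg}, apply Theorem \ref{thm:hccross} naturally to the quotient map $\elli(\fA)\to\elli(\fA)/S(\fA)$, and pass to homotopy fibers. The paper's proof is a terse one-liner ("apply Theorem \ref{thm:hccross} and take fibers"), and what you have written is precisely the expansion of that sentence, including the observation that $\Gami(\fA)/I_{S(\fA)}\cong(\elli(\fA)/S(\fA))\#_\cP\Gamma$, that $\bH(\Gamma/\cP,-)$ preserves fiber sequences, and that the crucial input is the naturality already asserted in the statement of Theorem \ref{thm:hccross}.
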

\begin{proof} By Proposition \ref{prop:cpg}, we have $\Gami(\fA)=\elli(\fA)\#_\cP\Gamma$ and $I_{S(\fA)}=S(\fA)\#_\cP\Gamma$.
Now apply Theorem \ref{thm:hccross} and take fibers.  
\end{proof}

\goodbreak

\subsection{Hodge decomposition}\label{subsec:hodge}
If $R$ is a commutative $\Q$-algebra, then there are defined Adams
operations on $C(R)$, and we have an eigenspace decomposition
\cite{lod}*{Theorems 4.5.10 and 4.6.7}
\begin{equation}\label{hodge}
C(R)=\bigoplus_{p\ge 0} C^{(p)}(R),
\end{equation}
called the \emph{Hodge decomposition}. We have $C^{(p)}_n=0$ for
$n<p$ and each $C^{(p)}$ is a graded $R$-submodule, closed under the
Hochschild boundary map $b$. Thus, if $M$ is a central $R$-bimodule,
for $HH^{(p)}(R,M)=M\otimes_R (C^{(p)}(R),b)$ we have
\[
HH_n(R,M)=\bigoplus_{p\ge 0}^nHH^{(p)}_n(R,M).
\]
The Connes operator $B$ sends $C^{(p)}$ to $C^{(p+1)}$. Thus, we
have a direct sum decomposition of the cyclic complex
\[
HC(R)=\bigoplus_{p=0}^\infty HC^{(p)}(R)
\]
where
\[
HC^{(p)}(R)_n=\bigoplus_{p\ge 0}^nC^{(n-p)}_{n-2p}(R).
\]
Hence for $HC_*^{(p)}(R)=H_*(HC^{(p)}(R))$,
\[
HC_n(R)=\bigoplus_{p=0}^nHC_n^{(p)}(R).
\]
Let $(\Omega^*_{R},d)$ be the $DGA$ of (absolute) K\"ahler differential forms.
There is a natural map of mixed complexes
\begin{gather}
\mu:(C(R),b,B)\to (\Omega_{R},0,d)\nonumber\\
\mu(x_0\otimes\dots\otimes x_n)=(1/n!)x_0dx_1\land\cdots\land
dx_n.\label{map:mu}
\end{gather}
Let $M$ be a central $R$-bimodule; the map $\mu$ induces
isomorphisms
\begin{gather}
HH_n^{(n)}(R,M)=M\otimes_R\Omega^n_{R}\label{hhnn}\\
\mbox{and }
HC_n^{(n)}(R)=\Omega^n_{R}/d(\Omega^{n-1}_{R}).\label{hcnn}
\end{gather}

We say that $R$ is \emph{homologically smooth} if \eqref{map:mu} is
a quasi-isomorphism.

\begin{rem}
If $R$ happens to also be an algebra over $\cP$, then the Hodge decomposition
above induces a similar decomposition on $HH(R/\cP,M)$ and $HC(R/\cP)$,
so that $HH^{(p)}(R,M)\to HH^{(p)}(R/\cP,M)$ and $HH^{(p)}(R,M)\to HH^{(p)}(R/\cP)$
are quasi-isomorphisms. Moreover $\Omega_R\to \Omega_{R/\cP}$ is an isomorphism.
\end{rem}

\begin{exa}\label{exa:cxsmooth}
Let $R$ be a unital commutative complex $C^*$-algebra over $\C$. It
was proved in \cite{galgtop}*{Thm. 8.2.6} that $R$, regarded as a
$\Q$-algebra, is homologically smooth. In particular this applies
when $R=\ell^\infty$. Moreover, by \cite{galgtop}*{proof of Prop.
5.2.2}, $\elli$ is a filtering colimit of smooth $\C$-algebras. It
follows that $\Omega^n_{\elli}$ is a flat $\elli$-module for every
$n$. Hence
\[
HH_n(\elli, M)=M\otimes_{\elli}\Omega^n_{\elli}
\]
for every central bimodule $M$.
\end{exa}

Now assume that the commutative $\Q$-algebra $R$ is an $\emb$-bundle.
Then by Proposition \ref{prop:phiso}, Theorem \ref{thm:hccross}, and
naturality of the Hodge decomposition, we have quasi-isomorphisms
\begin{gather}
HH(R\#_\cP\Gamma,M\#_\cP\Gamma)\weq\bigoplus_{p\ge
0}\bH(\Gamma/\cP,HH^{(p)}(R/\cP,M))\\
\mbox{and }HC(R\#_\cP\Gamma)\weq \bigoplus_{p\ge
0}\bH(\Gamma/\cP,HC^{(p)}(R/\cP)).
\end{gather}

Put
\begin{gather}
HH_n^{(p)}(R\#_\cP\Gamma,M\#_\cP\Gamma)=\bH_n(\Gamma/\cP,HH^{(p)}(R/\cP,M)),\label{hodgenc}\\
HC_n^{(p)}(R\#_\cP\Gamma)=\bH_n(\Gamma/\cP,HC^{(p)}(R/\cP)).\nonumber
\end{gather}

We have decompositions
\begin{gather*}
HH_n(R\#_\cP\Gamma,M\#_\cP\Gamma)=\bigoplus_{p=0}^nHH_n^{(p)}(R\#_\cP\Gamma,M\#_\cP\Gamma),\\
HC_n(R\#_\cP\Gamma)=\bigoplus_{p=0}^nHC_n^{(p)}(R\#_\cP\Gamma).
\end{gather*}

If follows from \eqref{hhnn}, \eqref{hcnn}, and Proposition
\ref{prop:ce=estar} that
\begin{gather}
HH_n^{(n)}(R\#_\cP\Gamma,M\#_\cP\Gamma)=(M\otimes_R\Omega^n_{R})_\cE,\label{somehodgenc}\\
HC_n^{(n)}(R\#_\cP\Gamma)=(\Omega^n_{R}/d\Omega^{n-1}_{R})_\cE.\nonumber
\end{gather}

\section{The relative cyclic homology \topdf{$HC_*(\Gami(\fA):I_{S(\fA)})$}{HC(Gami:IS)}}\label{sec:wod}

\subsection{The Quillen spectral sequence}

Let $R$ be a unital $\Q$-algebra and $I\triqui R$ a two-sided ideal, flat both as a right and as a left ideal. Then
\[
I^{\otimes_R^n}\cong I^n.
\]
Using the isomorphism above and flatness again we see that if $P\weq
I$ is a projective bimodule resolution, then $Q=P^{\otimes_R^n}\weq
I^n$ is again a resolution. Hence modding out $Q$ by the commutator
subspace $[Q,R]$ we obtain a complex which computes $HH_*(R,I^n)$
and which has a natural action of $\Z/n\Z$ via permutation of
factors. Following Quillen \cite{qui}*{pp 210} we shall write
$HH_*(R,I^n)_\sigma$ for the coinvariants of this action. Quillen
introduced a first quadrant spectral sequence (see
\cite{qui}*{Proposition 2.16 and Theorem 4.3}),
\begin{equation}\label{specseq}
E^1_{p,q}=\left\{\begin{matrix}HC_q(R)&  p=0\\
HH_{q-p+1}(R,I^p)_\sigma& p\ge 1,\end{matrix}\right.
\end{equation}
which converges to $HC_{p+q}(R/I)$. For example, every ideal
$J\triqui\cB=\cB(\ell^2)$ of the algebra of bounded operators is
flat; M. Wodzicki has used this spectral sequence, together with the
results of \cite{kenetal2}, to study the relative cyclic homology
groups $HC_*(\cB:J)$. By Proposition \ref{prop:nflatgamma}, every
ideal of $\Gami$ is flat; by Proposition \ref{prop:flatellia} and
Examples \ref{exas:root}, the same is true of $I_{c_0(\fA)}$ and
$I_{\ell^{\infty-}(\fA)}$ for every unital Banach algebra $\fA$. In
this subsection we shall use Quillen's spectral sequence to study
the cyclic homology groups $HC_*(\Gami:I_S)$.
Proposition \ref{prop:tensor} below will play a role akin to that
played by \cite{wodk}*{Theorem 8} in the context of operator ideals.
Let $\fA$ and $\fB$ be Banach algebras, and let $\hotimes$ be the projective tensor product. We have maps
\begin{gather}
\Gamma\otimes\Gamma\to \Gamma(\N\times \N),\ \ U_f\otimes U_g\mapsto U_{f\times g},\label{map:tensogama}\\
\boxtimes:\elli(\fA)\otimes\elli(\fB)\to
\elli(\N\times\N,\fA\hotimes\fB),\ \
(\alpha\boxtimes\beta)_{m,n}=\alpha_n\hotimes\beta_m.\label{map:boxtimes}
\end{gather}
These two maps together induce
\begin{multline}\label{map:tensoab}
\Gami(\fA)\otimes\Gami(\fB)\to\\
\Gami(\N\times\N,\fA\hotimes\fB):=\elli(\N\times\N,\fA\hotimes\fB)\#_{\cP(\N\times\N)}\Gamma(\N\times\N).\nonumber
\end{multline}
We write $\Gami(\N\times\N)=\Gami(\N\times\N,\C)$. In particular we
have a map
\begin{equation}\label{map:tensocc}
\Gami\otimes\Gami\to \Gami(\N\times\N).
\end{equation}

\begin{prop}\label{prop:tensor}(cf.\cite{wodk}*{Theorem 8})

\smallskip

Let $S,T\triqui\elli$ be symmetric ideals, and let $\fB$ be a unital
Banach algebra. Assume that
\item[i)] The map \eqref{map:boxtimes} sends $S\otimes T\to T(\N\times\N)$.
\item[ii)] $S_{\cE}=0$.

Then $$HH_*(\Gami(\fB), I_{T(\fB)})=0.$$
\end{prop}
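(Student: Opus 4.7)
The plan is to imitate the proof of \cite{wodk}*{Theorem 8} in the crossed-product setting, combining an external product structure supplied by the map \eqref{map:tensoab} with the vanishing $HH_0(\Gami, I_S) = S_\cE = 0$ that follows from hypothesis (ii) and Corollary \ref{cor:hh0}. First, by Proposition \ref{prop:cpg}, I would identify $\Gami(\fB) = \elli(\fB)\#_\cP\Gamma$ and $I_{T(\fB)} = T(\fB)\#_\cP\Gamma$, and work directly with the Hochschild chain complex $C_*(\Gami(\fB), I_{T(\fB)})$ with the goal of showing that it is acyclic.

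To set up the external product, I would fix a bijection $\sigma: \N \iso \N \times \N$. Via $\sigma$, the map \eqref{map:tensoab} becomes a ring homomorphism $\mu: \Gami \otimes \Gami(\fB) \to \Gami(\fB)$ which, by hypothesis (i), restricts to a map $I_S \otimes I_{T(\fB)} \to I_{T(\fB)}$. Combining $\mu$ with the Eilenberg--Zilber shuffle map then gives a chain pairing
\[
\ast: C_*(\Gami, I_S) \otimes C_*(\Gami(\fB), I_{T(\fB)}) \to C_*(\Gami(\fB), I_{T(\fB)}).
\]

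The core of the argument would be to exhibit a $0$-cycle $u \in C_0(\Gami, I_S) = I_S$ together with a chain homotopy $u \ast - \simeq \id$ on $C_*(\Gami(\fB), I_{T(\fB)})$. I would construct $u$ by a compression-dilation (Morita swindle) argument on the $\N \sqcup \N \cong \N$ decomposition supplied by $\sigma$, taking $u$ to be the image of a suitable projection in the first tensor factor whose action on $\Gami(\fB)$ becomes inner-equivalent to the identity after conjugation by a partial isometry $U_f \in \Gami$ (with $f \in \cE$, $\dom f = \N$, $\ran f \subsetneq \N$). Hypothesis (i) is exactly what is needed to ensure that $u$ lies in $I_S$ and not merely in $\Gami$. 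Granted such a $u$, the conclusion is immediate: since $HH_0(\Gami, I_S) = S_\cE = 0$, we can write $u = b(v)$ with $v \in C_1(\Gami, I_S)$; then for any cycle $z$ in $C_*(\Gami(\fB), I_{T(\fB)})$ the Leibniz rule for $\ast$ together with $b(z) = 0$ yields $u \ast z = b(v \ast z)$, while $u \ast z \equiv z$ modulo boundaries via the chain homotopy, so $z$ itself is a boundary.

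The main obstacle will be the construction of $u$ and, above all, the verification of a genuine chain-level homotopy $u \ast - \simeq \id$ (rather than a mere identity in homology). In the operator-ideal setting of \cite{wodk}, the analogous step uses a bilateral shift in $\cB(\ell^2)$ combined with a rank-one projection; in our crossed-product $\Gami = \elli \#_\cP \Gamma$ the bookkeeping is more involved because the candidate element must respect membership in $I_S$ and compatibility with the $\cP$-structure, and the chain homotopy itself must be built explicitly and shown to interact correctly with the shuffle pairing. This is where the two hypotheses interact most delicately, and it will likely absorb most of the technical effort.
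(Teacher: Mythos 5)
Your proposal shares the essential ingredients with the paper's proof --- the external product coming from the tensoring map \eqref{map:tensoab}, the idempotent $u=E_{1,1}$, and the vanishing $HH_0(\Gami,I_S)=S_\cE=0$ supplied by hypothesis (ii) and Corollary \ref{cor:hh0} --- but it handles the crucial Morita-type step quite differently, and that is exactly where the gap lies. The paper does not try to build a chain homotopy at all. It observes (following \cite{hl}*{Proposition 7.3.4}) that the composite $\Gami(\fB)\xrightarrow{E_{1,1}\otimes-}\Gami\otimes\Gami(\fB)\to M_2\Gami(\fB)$ is the corner embedding, which by hypothesis (i) restricts to the ideals $I_{T(\fB)}\to I_S\otimes I_{T(\fB)}\to M_2I_{T(\fB)}$. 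Morita invariance then gives that the corner embedding induces an isomorphism on relative $HH_*$, so it suffices to prove that the middle group $HH_*(\Gami\otimes\Gami(\fB),I_S\otimes I_{T(\fB)})$ vanishes; this follows from the K\"unneth formula together with an induction on degree, since the $p=0$ K\"unneth summand dies because $HH_0(\Gami,I_S)=S_\cE=0$ and the $p\ge 1$ summands die by the inductive hypothesis on lower degrees.

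Your ``boundary swindle'' --- $u=b(v)$ forces $u\ast z=b(v\ast z)$ by Leibniz, which combined with $u\ast z\equiv z$ kills $z$ --- would be a legitimate alternative that sidesteps the K\"unneth induction, but two points need repair. First, you do not in fact need a chain-level homotopy $u\ast-\simeq\id$: the weaker statement $[u\ast z]=[z]$ in relative Hochschild homology is all your argument ever uses, and as you yourself suspect the chain homotopy would be considerably harder to produce. Second, and this is the genuine gap, that homology-level identity is precisely the Morita-invariance input, and the ``compression--dilation'' you sketch is too vague to establish it. After reindexing through $\sigma$, the ring endomorphism $x\mapsto\mu(E_{1,1}\otimes x)$ of $\Gami(\fB)$ is conjugation by a non-surjective isometry $U_f$ with $f\in\cE$, $\ran f\subsetneq\N$, and showing that such a conjugation induces the identity on $HH_*(\Gami(\fB),I_{T(\fB)})$ is not automatic --- it is itself a nontrivial stability statement. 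The paper's device of landing in $M_2\Gami(\fB)$ rather than in $\Gami(\fB)$ is exactly what reduces this to the standard corner-embedding form of Morita invariance. If you adopt that factorization, you may then keep your swindle in place of the K\"unneth induction if you prefer; but without it, the key step is not established.
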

\begin{proof}
Proceeding as in the proof of \cite{hl}*{Proposition 7.3.4}, we
obtain a commutative diagram
\[
\xymatrix{\Gami\otimes\Gami(\fB)\ar[r]& M_2\Gami(\fB)\\
\Gami(\fB)\ar[u]^{E_{1,1}\otimes-}\ar[ur]&}
\]
By hypothesis i) this restricts to a commutative diagram
\[
\xymatrix{I_S\otimes I_{T(\fB)}\ar[r]& M_2 I_{T(\fB)}\\
I_{T(\fB)}\ar[u]^{E_{1,1}\otimes-}\ar[ur]&}
\]
Now use hypothesis ii), Morita invariance and the K\"unneth formula
for Hochschild homology (\cite{lod}*{Theorem 1.2.4} and
\cite{chubu}*{Proposition 9.4.1}), and induction, to conclude that
$HH_*(\Gami(\fA),I_{T(\fA)})=0$.
\end{proof}

We shall need the following result of Dykema, Figiel, Weiss and
Wodzicki, which follows by combining \cite{kenetal2}*{Theorem
5.11(ii) and Theorem 5.12}.

\begin{prop}\label{prop:citakenetal}(\cite{kenetal2})
Let $S\triqui\elli$ be a symmetric ideal and let $\omega=(1/n)_{n\ge
1}$ be the harmonic sequence. Then
\[
S_\cE=0\iff \omega\boxtimes S\subset S(\N\times\N).
\]
\end{prop}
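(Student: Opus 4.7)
The plan is to derive the proposition as a formal combination of the two cited theorems from \cite{kenetal2}, with the key conceptual point being that $S_\cE$ is naturally isomorphic to the commutator quotient of the corresponding operator ideal.

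First, I would invoke Theorem 5.12 of \emph{loc. cit.}, which identifies $S_\cE$ with $J_S/[\cB,J_S]$ via the diagonal embedding $\diag\colon S\to J_S$. This is the same isomorphism already used in the proof of Proposition \ref{prop:hh0m} above; under it, the condition $S_\cE=0$ translates into the operator-theoretic statement that $J_S=[\cB,J_S]$, i.e.\ that $J_S$ is a commutator ideal in $\cB$.

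Second, I would apply Theorem 5.11(ii) of \cite{kenetal2}, which characterizes precisely when an operator ideal coincides with its commutator ideal $[\cB,J_S]$. The characterization is given in terms of an arithmetic mean stability condition on the symmetric ideal $S$. The harmonic sequence $\omega=(1/n)_{n\ge 1}$ encodes the arithmetic mean operation, and the external product $\omega\boxtimes S$ — whose $(m,n)$-entry is $\omega_m s_n$ for $s\in S$ — arranges this averaging data as an $\N\times\N$-matrix. The requirement that this matrix lie in $S(\N\times\N)$ is precisely the arithmetic mean stability condition appearing in Theorem 5.11(ii), after passage through Calkin's correspondence.

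The only real work lies in verifying that the formulation of Theorem 5.11(ii) matches verbatim the condition $\omega\boxtimes S\subset S(\N\times\N)$; once this terminological translation is in hand, the biconditional is immediate by chaining the two equivalences: $S_\cE=0$ iff $J_S=[\cB,J_S]$ (Theorem 5.12) iff $\omega\boxtimes S\subset S(\N\times\N)$ (Theorem 5.11(ii)). No new ideas beyond those of \cite{kenetal2} enter the argument, so the proof reduces essentially to assembling these two references and unwinding the definitions.
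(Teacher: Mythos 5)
Your proposal is correct and follows essentially the same route as the paper: the paper itself offers no argument beyond the sentence that the proposition ``follows by combining \cite{kenetal2}*{Theorem 5.11(ii) and Theorem 5.12},'' and your two-step reduction — Theorem~5.12 to identify $S_\cE$ with $J_S/[\cB,J_S]$ (this is exactly how the paper uses Theorem~5.12 in the proof of Proposition~\ref{prop:hh0m}), then Theorem~5.11(ii) to characterize $[\cB,J_S]=J_S$ in terms of the $\omega\boxtimes$ condition — is precisely the intended combination.
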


\begin{prop}\label{prop:hcmismo}
\item[i)] $HC_*(\Gami:I_{c_0})=HC_*(\cB:J_{c_0})=0$.

\item[ii)] $HC_*(\Gami:I_{\ell^{\infty-}})=HC_*(\cB:J_{\ell^{\infty-}})=0$.

\item[iii)] Let $0<p<\infty$, $S\in\{\ell^p,\ell^{p-},\ell^{p+}\}$,
\begin{gather*}
m=\min\{n:HC_n(\Gami:I_{S})\neq 0\},\ \ \text{ and}\\
m'=\min \{n:HC_n(\cB:J_{S})\neq 0\}.
\end{gather*}
Then $m=m'$ and the map $HC_m(\Gami:I_S)\to HC_m(\cB:J_S)$ is an isomorphism.
\end{prop}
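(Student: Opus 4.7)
All three parts rest on Quillen's spectral sequence \eqref{specseq}, which applies to both $(\Gami,I_S)$ and $(\cB,J_S)$: the first because every two-sided ideal of $\Gami$ is flat (Proposition~\ref{prop:nflatgamma}), the second because all ideals of $\cB$ are flat. The inclusion $\Gami\subset\cB$ with $I_S\subset J_S$ induces a map of spectral sequences. Proposition~\ref{prop:tensor} plays on the $\Gami$-side the role played by \cite{wodk}*{Theorem~8} on the $\cB$-side, and this parallelism is what drives all three statements.

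\emph{Parts (i) and (ii).} For $S\in\{c_0,\ell^{\infty-}\}$ I would show $HH_*(\Gami,I_S^n)=0$ for every $n\ge 1$; Quillen's spectral sequence then forces $HC_*(\Gami:I_S)=0$. Three ingredients are needed. First, $S_\cE=0$, verified via Proposition~\ref{prop:citakenetal} by checking $\omega\boxtimes S\subset S(\NN\times\NN)$: immediate for $c_0$, and for $\ell^{\infty-}$ one uses that $\omega\in\ellq$ for every $q>1$, so $\omega\boxtimes\alpha\in\ellq(\NN\times\NN)$ whenever $\alpha\in\ellp$ and $q>\max(1,p)$. Second, $S\boxtimes S\subset S(\NN\times\NN)$, by the same type of direct product estimate. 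Third, idempotency $I_S^n=I_S$: the factorization $\alpha=\nu_\alpha\sqrt{|\alpha|}\cdot\sqrt{|\alpha|}$ has both factors in $S$ (cf.\ Examples~\ref{exas:root}) and lifts through the crossed product $I_S=S\#_\cP\Gamma$ of Proposition~\ref{prop:cpg} to give $I_S^n=I_S$. Proposition~\ref{prop:tensor} with $\fB=\CC$, $T=S$ then yields $HH_*(\Gami,I_S)=0$, and idempotency extends this to all powers. The parallel statement $HC_*(\cB:J_S)=0$ is the corresponding result of Wodzicki, obtained by the identical spectral sequence recipe with his Theorem~8 in place of our Proposition~\ref{prop:tensor}.

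\emph{Part (iii).} For $S\in\{\ellp,\ell^{p-},\ell^{p+}\}$ I would run Quillen's spectral sequence on both pairs in parallel. Wodzicki computed $m'$ by showing that the $E^1$-terms $HH_{b-a+1}(\cB,J_S^a)_\sigma$ vanish for $a+b<m'$ and that a single $E^1$-entry survives on the diagonal $a+b=m'$. Substituting Proposition~\ref{prop:tensor} for his Theorem~8 yields the identical vanishing range on the $\Gami$-side, so $m\ge m'$ and both spectral sequences collapse at their minimal diagonal (there being no room for differentials to reach the lone surviving entry). Consequently $HC_m(\Gami:I_S)$ and $HC_{m'}(\cB:J_S)$ are identified with the corresponding $E^1$-terms, and the inclusion induces a natural map between them; showing this map is an isomorphism is the crux. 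Corollary~\ref{coro:hc0} settles the bidegree $(1,0)$ case (i.e.\ $HC_0$), and for larger $m'$ the comparison is obtained by extending the Morita/K\"unneth reduction of Proposition~\ref{prop:tensor} to the coinvariant groups $HH_{b-a+1}(-,(-)^a)_\sigma$ and matching it against Wodzicki's explicit calculation in \cite{wodk}. This last step---verifying that the canonical map on the single surviving $E^1$-term in minimal degree is an isomorphism---is the main obstacle, as it requires a term-by-term match with Wodzicki's computation at the critical bidegree on both sides.
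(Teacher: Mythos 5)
Your plan follows the paper's proof exactly, and parts (i) and (ii) are essentially a fully correct rendering of what the paper does: use Quillen's spectral sequence \eqref{specseq}, note $E^1_{0,*}=0$ because $\Gami$ and $\cB$ are infinite sum rings, use $S^2=S$ for idempotency, and kill all $E^1$-columns $p\ge 1$ via Proposition~\ref{prop:citakenetal} together with Proposition~\ref{prop:tensor} (on the $\Gami$-side) and Wodzicki's Theorem~8 (on the $\cB$-side).

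For part (iii) you reach the right framework but stop short of closing the step you correctly identify as the crux, and here you underestimate the tools already in hand. You cite Corollary~\ref{coro:hc0}, which handles the surviving entry only when it sits in bidegree $(1,0)$, i.e.\ when the relevant coefficient is $I_S$ itself. But for $S=\ellp$ with $p>1$ the first potentially nonzero $E^1$-entry lies in bidegree $(a,a-1)$ with coefficient $I_S^a=I_{S^a}$ for the smallest integer $a$ with $p/a\le 1$, and it is $HH_0(\Gami,I_{S^a})_\sigma$, not $HH_0(\Gami,I_S)$. This is not a case of coro:hc0; the right statement is Proposition~\ref{prop:hh0m}, which asserts $HH_0(\Gami,I_T)\iso HH_0(\cB,J_T)$ for \emph{every} symmetric ideal $T$, and in particular applies to $T=S^a$ (a symmetric ideal, e.g.\ $(\ellp)^a=\ell^{p/a}$). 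The isomorphism of prop:hh0m is natural, hence $\Z/a\Z$-equivariant for the action permuting tensor factors, and therefore descends to an isomorphism of $\sigma$-coinvariants. Combined with the vanishing of all lower diagonals on both sides and the absence of incoming or outgoing differentials at the minimal diagonal, this immediately gives $m=m'$ and that $HC_m(\Gami:I_S)\to HC_m(\cB:J_S)$ is an isomorphism. No further ``term-by-term match with Wodzicki's computation at the critical bidegree'' is required; the comparison map on the surviving $E^1$-term is an isomorphism by prop:hh0m alone, uniformly in $S$. The same remark covers the remaining cases $S=\ell^{p\pm}$.
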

\begin{proof}
Consider the spectral sequence \eqref{specseq} in the cases
$R=\Gami,\cB$ and $I=I_S,J_S$ for each of the symmetric ideals $S$
of the proposition. We have $E^1_{0,*}=0$ since both $\Gami$ and
$\cB$ are rings with infinite sums (\cite{hl}*{\S5}). In both i) and ii), we have
$S^2=S$ and $\omega\boxtimes S\subset S(\N\times\N)$ whence
$E^1_{*,*}=0$, by Propositions \ref{prop:citakenetal} and
\ref{prop:tensor} and \cite{wodk}*{Theorem 8}. This gives i) and
ii). In each of the cases considered in part iii), we have
$S\boxtimes S\subset S(\N\times\N)$. Since $\omega\in\ell^p$ if and
only if $p>1$ and since $(\ell^p)^n=\ell^{p/n}$, we have
$HH_*(\Gami,I_{(\ell^p)^n})=HH_*(\cB,(\Ell^{p})^n)=0$ for $p/n>1$,
again by Propositions \ref{prop:citakenetal} and \ref{prop:tensor}
and \cite{wodk}*{Theorem 8}. The case $S=\ell^p$ follows from this
and from Corollary \ref{prop:hh0m}. The remaining cases follow
similarly.
\end{proof}

\begin{rem}\label{rem:preview}
Proposition \ref{prop:compu} below provides a more detailed computation of $HC_n(\Gami:I_S)$ for $S$ as in case iii) of
Proposition \ref{prop:hcmismo} above.
\end{rem}

\begin{thm}\label{thm:k=kh}
The comparison map $K_*(I_{S(\fA)})\to KH_*(I_{S(\fA)})$ is an
isomorphism in the following cases:
\item[i)] $S=c_0$ and $\fA$ is a $C^*$-algebra.
\item[ii)] $S=\ell^{\infty-}$ and $\fA$ is a unital Banach algebra.
\end{thm}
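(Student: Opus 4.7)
The plan is to combine three ingredients: (a) $K$-excisivity of $I_{S(\fA)}$, (b) vanishing of the relative cyclic homology $HC_*(\Gami(\fA):I_{S(\fA)})$, and (c) the long exact sequence \eqref{intro:seqkth}. Since $c_0$ and $\ell^{\infty-}$ both lie in the list \eqref{eq:lists}, the sequence (c) is available in both cases.

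For (a), Example \ref{exa:excicstar} shows that $c_0(\fA)$ has the triple factorization property when $\fA$ is a $C^*$-algebra, and Example \ref{exa:exciunital} does the same for $\ell^{\infty-}(\fA)$ when $\fA$ is a unital Banach algebra. Proposition \ref{prop:tfp} then implies that $I_{S(\fA)}$ is $K$-excisive in both cases, which produces a natural isomorphism $K_*(I_{S(\fA)}) \iso K_*(\Gami(\fA):I_{S(\fA)})$.

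For (b), the case $\fA=\C$ is exactly Proposition \ref{prop:hcmismo}(i)--(ii). For general $\fA$ I would invoke Proposition \ref{prop:tensor} with $T = S$. The hypothesis that $\boxtimes$ sends $S \otimes S$ into $S(\N \times \N)$ is direct: for $S = c_0$ a product of null sequences indexed on $\N \times \N$ still tends to $0$ at infinity, and for $S = \ell^{\infty-}$ an $\ell^p$-by-$\ell^p$ product lies in $\ell^p(\N \times \N) \subset \ell^{\infty-}(\N \times \N)$. The hypothesis $S_\cE = 0$ follows from Proposition \ref{prop:citakenetal}, since $\omega = (1/n) \in c_0 \cap \ell^{\infty-}$ and $\omega \boxtimes S \subset S(\N \times \N)$ in both cases. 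Proposition \ref{prop:tensor} then delivers $HH_*(\Gami(\fA), I_{S(\fA)}) = 0$; combining this with the $H$-unitality of $I_{S(\fA)}$ (a consequence of the TFP established for (a), via Suslin--Wodzicki) and the SBI long exact sequence, we obtain $HC_*(\Gami(\fA):I_{S(\fA)}) = 0$.

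Finally, substituting (b) into \eqref{intro:seqkth} every third term vanishes, yielding $K_*(\Gami(\fA):I_{S(\fA)}) \iso KH_*(I_{S(\fA)})$; precomposing with the excision isomorphism from (a) produces the desired $K_*(I_{S(\fA)}) \iso KH_*(I_{S(\fA)})$. The main obstacle will be step (b) in case (i): Proposition \ref{prop:tensor} requires $\fB$ to be unital, which is a genuine issue when $\fA$ is a non-unital $C^*$-algebra. The cleanest fix is to pass to the unitization $\fA^+$ and exploit the split short exact sequence $0 \to I_{c_0(\fA)} \to I_{c_0(\fA^+)} \to I_{c_0} \to 0$, deducing the vanishing for $I_{c_0(\fA)}$ from the vanishing for $I_{c_0(\fA^+)}$ (unital case) and $I_{c_0}$ (the $\fA=\C$ case of Proposition \ref{prop:hcmismo}).
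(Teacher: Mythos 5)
Your overall architecture matches the paper's: establish $K$-excisivity via the TFP (Proposition \ref{prop:tfp} and Examples \ref{exa:excicstar}, \ref{exa:exciunital}), prove the vanishing of the relative cyclic homology $HC_*(\Gami(\fA):I_{S(\fA)})$, and then conclude from the long exact sequence \eqref{intro:seqkth}. Your handling of the nonunital case of (i) via passage to the unitization is a perfectly valid substitute for the paper's one-line appeal to excision, and your verification of the hypotheses of Propositions \ref{prop:tensor} and \ref{prop:citakenetal} is correct in substance (though note that what Proposition \ref{prop:citakenetal} requires is $\omega\boxtimes S\subset S(\N\times\N)$, not merely $\omega\in S$; for $c_0$ and $\ell^{\infty-}$ both conditions do hold).

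There is, however, a genuine gap in step (b). Proposition \ref{prop:tensor} gives the vanishing of $HH_*(\Gami(\fA),I_{S(\fA)})$, the Hochschild homology \emph{with coefficients in the bimodule} $I_{S(\fA)}$. This is not the same object as the \emph{relative} Hochschild homology $HH_*(\Gami(\fA):I_{S(\fA)})$, which is the group that actually appears in the $SBI$ long exact sequence alongside $HC_*(\Gami(\fA):I_{S(\fA)})$. Invoking $H$-unitality does not identify these two either: excision lets you replace $HH_*(\Gami(\fA):I_{S(\fA)})$ by $HH_*(I_{S(\fA)})$ (the absolute Hochschild of the nonunital ring), but that still has no direct relation to $HH_*(\Gami(\fA),I_{S(\fA)})$. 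So the argument as you wrote it — vanishing of Hochschild with coefficients, plus $H$-unitality, plus $SBI$ — does not close. The paper gets around this by a different mechanism: it invokes Proposition \ref{prop:flatellia} to establish that $I_{S(\fA)}$ is a flat ideal (using $S^2=S$ and closure of $S$ under square roots), and then feeds the vanishing $HH_*(\Gami(\fA),I_{S(\fA)}^n)=0$ into Quillen's spectral sequence \eqref{specseq}. This gives $HC_*(\Gami(\fA)/I_{S(\fA)})=0$, and since $\Gami(\fA)$ is an infinite sum ring, $HC_*(\Gami(\fA))=0$ as well, which together force $HC_*(\Gami(\fA):I_{S(\fA)})=0$. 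The flatness hypothesis (Proposition \ref{prop:flatellia}) is thus an essential ingredient missing from your argument, and replacing $SBI$ by the Quillen spectral sequence is the correct bridge from the $HH$-with-coefficients vanishing to the relative $HC$-vanishing.
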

\begin{proof} By Proposition \ref{prop:tfp} and Examples \ref{exa:excicstar} and \ref{exa:exciunital}, $I_{S(\fA)}$
is $H$-unital in both cases. Hence by \eqref{intro:seqkth} it
suffices to show that $HC_*(\Gami(\fA):I_{S(\fA)})=0$. As explained
in the proof of Proposition \ref{prop:hcmismo}, Proposition
\ref{prop:citakenetal} implies that $S_\cE=0$. Hence if $\fA$ is
unital we are done by Propositions \ref{prop:flatellia} and
\ref{prop:tensor}; in particular, part ii) is proved. The nonunital
case of i) follows from the unital case using excision.
\end{proof}

\subsection{Computing \topdf{$HC^{(p)}(\Gami:I_S)$}{HC(Gami:IS)} in terms of differential forms}

Let $S\triqui \elli$ be an ideal. Consider the subcomplex
\begin{gather}
\cF_p(S)\subset \Omega_{\elli}\label{cF}\\
(\cF_p(S))^q=\left\{\begin{matrix}S^{p-q+1}\Omega^{q}_{\elli} & p\ge q\\
\Omega^q_{\elli}& q>p.\end{matrix}\right.\nonumber
\end{gather}
Write
\begin{gather}
 D^{(p)}(S)_q=(\Omega^{-q}_{\elli}/(\cF^{-q}_p(S))\\
L^{(p)}(S)_q=\cF^{-q}_{p-1}(S)/\cF^{-q}_{p}(S).
\end{gather}
Note $ L^{(p)}(S)$ and $D^{(p)}(S)$ are nonpositive chain complexes.

\begin{thm}\label{thm:cgg}
Let $S\triqui\elli$ be a symmetric ideal.
Then there are $\emb$-equivariant quasi-isomorphisms
\begin{gather*}
HH^{(p)}(\elli/S)\weq L^{(p)}(S)[p]\\
 HC^{(p)}(\elli/S)\weq
D^{(p)}(S)[p].
\end{gather*}
\end{thm}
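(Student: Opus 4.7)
The plan is to leverage the homological smoothness of $\elli$ established in Example \ref{exa:cxsmooth}, which provides an $\emb$-equivariant, Hodge-graded quasi-isomorphism of mixed complexes $\mu \colon C(\elli) \weq \Omega_{\elli}$ sending $a_0 \otimes \cdots \otimes a_n$ to $\frac{1}{n!} a_0 \, da_1 \wedge \cdots \wedge da_n$. Reading the notation $\elli/S$ as the relative Hochschild/cyclic complex of the pair $(\elli, S)$, the task is to push $\mu$ through this relative construction to land inside the filtered subcomplex $\cF_\bullet(S) \subset \Omega_{\elli}$ and its quotient.

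First I would introduce a descending filtration $F^\bullet$ on $C(\elli)$ by ``number of tensor slots occupied by elements of $S$'', and verify, directly from the Leibniz rule governing $\mu$, that it lands inside the $S$-adic filtration $\cF_\bullet(S)$. Concretely, in chain-degree $n$ the image of $F^j C(\elli)_n$ inside $\Omega^n_{\elli}$ sits in $S^j \Omega^n_{\elli}$; once restricted to the $p$-th Adams eigenspace $C^{(p)}(\elli)$, the filtration step of level $j$ matches precisely the piece $\cF_{p+j-1}(S)$ after the smoothness identification is applied. Since $\emb$ acts on $\elli$ by algebra automorphisms preserving $S$, both the filtration and $\mu$ are $\emb$-equivariant.

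The principal obstacle, and the technical heart of the argument, is to upgrade $\mu$ to a \emph{filtered} quasi-isomorphism. For this I would exploit the flatness results at hand: $S$ is $\elli$-flat by Corollary \ref{coro:flatelli}, and $\Omega^n_{\elli}$ is $\elli$-flat by Example \ref{exa:cxsmooth}. Inductively these imply that all powers $S^j \Omega^n_{\elli}$ are flat, so that passing to associated gradeds commutes with taking Hochschild homology. A spectral-sequence argument then shows that $\mu$ is a quasi-isomorphism at the level of associated gradeds, and the filtered quasi-isomorphism follows. Taking the $p$-th associated graded of the filtration yields the Hochschild statement $HH^{(p)}(\elli/S) \weq L^{(p)}(S)[p]$, where the shift $[p]$ accounts for the homology of $C^{(p)}(\elli)$ being concentrated in degree $p$ under smoothness.

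For the cyclic statement, I would use that under $\mu$ the Connes operator $B$ corresponds to the de Rham differential $d$, so $\mu$ is a quasi-isomorphism of \emph{mixed} complexes; combining this with the filtered quasi-isomorphism of the previous step, the relative cyclic complex is identified with the quotient of the cyclic complex of the mixed complex $(\Omega_{\elli}, 0, d)$ by $\cF_p(S)$, which is exactly $D^{(p)}(S)[p]$. The $\emb$-equivariance carries over throughout, since each ingredient ($\mu$, the filtration by $S$-content, the Hodge decomposition, and the Connes operator) is visibly compatible with the action of $\emb$ on $\elli$ and its induced action on $\Omega_{\elli}$ preserving $\cF_\bullet(S)$.
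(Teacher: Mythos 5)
Your proposal diverges from the paper's argument in a fundamental way, and there are two genuine gaps.

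\textbf{Misreading of the statement.} You interpret $\elli/S$ as the relative Hochschild/cyclic complex of the pair $(\elli,S)$. In the paper $\elli/S$ is the \emph{quotient algebra}: the proof begins by observing that the projection $\elli\to\elli/S$ induces a quasi-isomorphism of cyclic modules $C(\Lambda,\partial)\weq C(\elli/S)$, where $\Lambda$ is a DGA model for the quotient. This matters because the answer is a \emph{quotient} $D^{(p)}(S)=\Omega_{\elli}/\cF_p(S)$ of the de Rham complex (not the subcomplex $\cF_p(S)$), which is what you expect for the quotient algebra, not for the relative complex of the ideal. The relative complex of the pair is recovered later, in Theorem \ref{thm:hcrel}, as the fiber of $\Omega_{\elli}\to D^{(p)}(S)$, which is where $\cF_p(S)$ itself enters.

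\textbf{The paper's actual route.} The proof introduces the (negatively) graded skew-commutative DGA $\Lambda=\elli\oplus S$ with $\Lambda_0=\elli$, $\Lambda_1=S$, and differential $\partial$ given by the inclusion $S\hookrightarrow\elli$. This is a Koszul-type model for $\elli/S$, so $C(\Lambda,\partial)\weq C(\elli/S)$. One then invokes the Cortiñas–Guccione–Guccione theorem \cite{cgg}*{Thms.\ 2.6 and 3.3} computing the Hochschild and cyclic complexes of a commutative chain DGA in terms of the de Rham complex of its degree-zero piece filtered by powers of the ideal, together with \cite{vigue} to match the Hodge decomposition. Proposition \ref{prop:ellippal} (finitely generated ideals of $\elli$ are projective) enters as the hypothesis making the CGG quasi-isomorphisms available — \emph{not} in the way you use flatness (to commute associated gradeds with taking homology). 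Naturality of all these maps in the algebra and ideal gives the $\emb$-equivariance.

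\textbf{The filtration claim is false.} You assert that, under $\mu(a_0\otimes\cdots\otimes a_n)=\tfrac{1}{n!}a_0\,da_1\wedge\cdots\wedge da_n$, a tensor with $j$ slots in $S$ is sent into $S^j\Omega^n_{\elli}$. This requires $dS\subset S\,\Omega^1_{\elli}$, which fails for a general symmetric ideal: the factors $da_i$ for $a_i\in S$ lie only in $\Omega^1_{\elli}$, and $ds_1\wedge ds_2$ is not in $S\,\Omega^2_{\elli}$ in general. This is precisely the bookkeeping the DGA model handles correctly — elements of $S$ are placed in \emph{degree $1$} of $\Lambda$, and the internal grading, rather than an $S$-adic power count on $C(\elli)$, tracks the filtration $\cF_\bullet(S)$ (note that $d$ sends $\cF_p^q=S^{p-q+1}\Omega^q$ to $\cF_p^{q+1}=S^{p-q}\Omega^{q+1}$, decreasing the $S$-power by one as the form degree increases, exactly as the DGA grading predicts). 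Your hand-wave that restricting to the Adams $p$-eigenspace repairs the count is the statement that would need to be proved, and it is not a consequence of the Leibniz rule alone.

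In short, the strategy needs to be rebuilt around the DGA model $\Lambda=\elli\oplus S$ and the CGG theorem; smoothness of $\elli$ and flatness of ideals enter, but not via a filtered quasi-isomorphism of $C(\elli)$ with the $S$-adic filtration on $\Omega_{\elli}$.
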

\begin{proof}
Consider the skew-commutative graded algebra $\Lambda=\elli\oplus S$
with grading $\Lambda_0=\elli$, $\Lambda_1=S$. The inclusion
$S\subset \elli$ defines a homogeneous $\elli$-linear derivation
$\partial:\Lambda\to\Lambda[-1]$. Thus $\Lambda$ is a chain $DGA$,
and the projection $\elli\to \elli/S$ defines a quasi-isomorphism of
cyclic modules $C(\Lambda,\partial)\weq C(\elli/S)$. By
\cite{cgg}*{Thms. 2.6 and 3.3} and Proposition \ref{prop:ellippal},
there are quasi-isomorphisms $C(\Lambda,\partial)\weq
\bigoplus_pL^{(p)}(S)[p]$ and $\fB(\Lambda,\partial)\weq
\bigoplus_pD^{(p)}(S)[p]$; by \cite{vigue} they are compatible with
the Hodge decomposition. Finally, all these quasi-isomorphisms are
natural, and thus $\emb$-equivariant.
\end{proof}
\begin{thm}\label{thm:hcrel}
\begin{align*}
HC_*^{(p)}(\Gami:I_S)=&\bH_{*+p}(\Gamma/\cP,\cF_{(p)}(S))\\
HH_*^{(p)}(\Gami:I_S)=&\bH_{*+p+1}(\Gamma/\cP,L_{(p)}(S)).\\
\end{align*}
\end{thm}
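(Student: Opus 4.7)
The plan is to combine the hyperhomology description of $HC_*(\Gami:I_S)$ from Corollary \ref{coro:hccross} with the differential-forms description of $HC^{(p)}(\elli/S)$ and $HH^{(p)}(\elli/S)$ from Theorem \ref{thm:cgg}, connecting the two via a fiber-sequence argument made possible by the naturality of the Hodge decomposition.

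First, specialize Corollary \ref{coro:hccross} to $\fA=\C$ to get
\[
HC_*(\Gami:I_S) = \bH_*(\Gamma/\cP, HC((\elli:S)/\cP)).
\]
The Hodge decomposition of $HC(R/\cP)$ for a commutative $\Q$-algebra $R$ over $\cP$ is natural in $R$ and hence $\emb$-equivariant in the present setting (the $\emb$-action on $\elli$ preserves $S$ and acts by algebra endomorphisms). Splitting off the $(p)$-summand yields the defining identity of Subsection \ref{subsec:hodge},
\[
HC^{(p)}_*(\Gami:I_S) = \bH_*(\Gamma/\cP, HC^{(p)}((\elli:S)/\cP)),
\]
and the analogous identity for Hochschild homology holds by Proposition \ref{prop:phiso} applied Hodge-piece-wise.

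Next I identify the Hodge pieces $HC^{(p)}((\elli:S)/\cP)$ and $HH^{(p)}((\elli:S)/\cP)$ via Theorem \ref{thm:cgg}. Applying that theorem to $S$ and to the zero ideal, and using naturality with respect to the surjection $\elli\twoheadrightarrow\elli/S$, we obtain a commutative square of $\emb$-equivariant quasi-isomorphisms whose top row is $HC^{(p)}(\elli)\to HC^{(p)}(\elli/S)$ and whose bottom row is the quotient map $\Omega_{\elli}[p]\to (\Omega_{\elli}/\cF_p(S))[p]$. Taking homotopy fibers identifies $HC^{(p)}(\elli:S)$ with $\cF_p(S)[p]$ in the derived category of $\Gamma$-modules, and substituting into the hyperhomology formula gives
\[
HC^{(p)}_n(\Gami:I_S) = \bH_n(\Gamma/\cP, \cF_p(S)[p]) = \bH_{n+p}(\Gamma/\cP, \cF_p(S)).
\]
The Hochschild statement is obtained analogously using the $L^{(p)}(S)$-half of Theorem \ref{thm:cgg}: because $L^{(p)}(0)\cong\Omega^p$ is concentrated in a single homological degree, the homotopy fiber of $L^{(p)}(0)[p]\to L^{(p)}(S)[p]$ picks up an additional degree shift, producing the $[p+1]$ appearing in the formula $\bH_{*+p+1}(\Gamma/\cP, L^{(p)}(S))$.

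The main obstacle I expect is ensuring that the quasi-isomorphisms of Theorem \ref{thm:cgg} are genuinely natural at the level of complexes, so that the fiber-sequence identification of $HC^{(p)}(\elli:S)$ with $\cF_p(S)[p]$ is legitimate; this should follow because the maps in \cite{cgg} are built from the antisymmetrization map $\mu$ of \eqref{map:mu}, which is manifestly natural and $\emb$-equivariant. A secondary issue is the careful bookkeeping of sign and shift conventions, particularly in the Hochschild case where the extra $[+1]$ reflects the associated-graded character of $L^{(p)}(S) = \cF_{p-1}(S)/\cF_p(S)$ relative to the filtration $\cF_\bullet(S)$ itself.
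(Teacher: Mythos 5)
Your overall strategy is the same as the paper's: combine the hyperhomology identity from Corollary \ref{coro:hccross} (or, equivalently, the Hodge-piece-wise identity \eqref{hodgenc}) with the differential-forms description from Theorem \ref{thm:cgg}. However, there is a genuine gap in the step where you identify the fiber.

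You claim that $HC^{(p)}(\elli)$ is $\emb$-equivariantly quasi-isomorphic to $\Omega_{\elli}[p]$, so that the fiber of $HC^{(p)}(\elli)\to HC^{(p)}(\elli/S)$ is $\cF_p(S)[p]$. This is not what Theorem \ref{thm:cgg} gives. Applying that theorem with $S=0$ yields $HC^{(p)}(\elli)\weq D^{(p)}(0)[p]$, where $D^{(p)}(0)$ is the \emph{truncation} $\Omega_{\elli}/\cF_p(0)$ of the de Rham complex in form degrees $\le p$ — not the whole complex $\Omega_{\elli}$. (The two differ precisely by $\cF_p(0)$, which carries the forms of degree $>p$ and has nontrivial cohomology, since $\elli$ has plenty of absolute de Rham cohomology.) Consequently the homotopy fiber of $D^{(p)}(0)\to D^{(p)}(S)$ is $\cF_p(S)/\cF_p(0)$, not $\cF_p(S)$, and your stated identification of $HC^{(p)}(\elli:S)$ with $\cF_p(S)[p]$ in the derived category of $\Gamma$-modules is false. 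The analogous issue affects your Hochschild argument.

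What rescues the formula is precisely the ingredient you never invoke: $\Gami$ is an infinite sum ring (\cite{hl}*{\S5}), so $HH^{(q)}_*(\Gami)=0$, i.e.\ $H_*(\Gamma/\cP,\Omega^q_{\elli})=0$ for every $q$. This forces $\bH_*(\Gamma/\cP,\cF_p(0))=0$, so the distinction between $\cF_p(S)$ and $\cF_p(S)/\cF_p(0)$ disappears once you apply $\bH(\Gamma/\cP,-)$, and one then gets exactly $HC^{(p)}_*(\Gami:I_S)=\bH_{*+p}(\Gamma/\cP,\cF_p(S))$. The paper's one-line proof explicitly cites this infinite-sum-ring vanishing for this reason. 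As written, your argument asserts a false quasi-isomorphism of complexes and omits the vanishing result that makes the conclusion hold after hyperhomology, so the proof is incomplete until you incorporate that input.
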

\begin{proof}
It follows from \eqref{hodgenc} using Theorem \ref{thm:cgg} and the fact that
$\Gami$ is an infinite sum ring (\cite{hl}*{\S5}).
\end{proof}

\begin{coro}\label{cor:wodspec}
There is a first quadrant homological spectral sequence
\[
{}_pE^1_{m,n}=H_n(\Gamma/\cP,S^{m+1}\Omega^{p-m}_{\elli})\Rightarrow
HC_{m+n+p}^{(p)}(\Gami:I_S).
\]
\end{coro}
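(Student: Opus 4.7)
The plan is to derive the spectral sequence from Theorem \ref{thm:hcrel} by applying a standard second hyperhomology spectral sequence to the complex $\cF_p(S)$. Since Theorem \ref{thm:hcrel} identifies $HC^{(p)}_*(\Gami:I_S)$ with $\bH_{*+p}(\Gamma/\cP,\cF_p(S))$, and $\cF_p(S)$ is naturally filtered by its internal degree, the corollary should follow by reading off this filtration spectral sequence on the $E^1$-page.

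Concretely, I would take a Cartan--Eilenberg resolution $P_{\bullet,\bullet}\to\cF_p(S)$ by $\Gamma$-projective modules, form the bicomplex $P\otimes_\Gamma\cP$, and filter its total complex by the internal (column) degree coming from $\cF_p(S)$. Applying $H_n(\Gamma/\cP,-)=\tor^\Gamma_n(-,\cP)$ column by column yields the $E^1$-page. Using the reindexing $m=p-q$, the column of index $m$ (for $0\le m\le p$) carries precisely $(\cF_p(S))^{p-m}=S^{m+1}\Omega^{p-m}_{\elli}$, so
\[
{}_pE^1_{m,n}=H_n(\Gamma/\cP,S^{m+1}\Omega^{p-m}_{\elli})
\]
as required. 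The spectral sequence lies in the first quadrant because $\Omega^{p-m}_{\elli}=0$ when $m>p$, so the relevant columns of $\cF_p(S)$ after reindexing have degree between $0$ and $p$, while $\tor^\Gamma_n(-,\cP)=0$ for $n<0$. Convergence to $HC^{(p)}_{m+n+p}(\Gami:I_S)$ then follows directly from Theorem \ref{thm:hcrel} by tracking the total degree contributed by the column and row indices together with the $+p$ shift.

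The only nontrivial bookkeeping will be reconciling the cohomological grading of $\cF_p(S)\subset\Omega_{\elli}$ with the chain-theoretic convention implicit in $\bH_*(\Gamma/\cP,-)$, and verifying that the ``above-$p$'' portion of $\cF_p(S)$ (where $(\cF_p(S))^q=\Omega^q_{\elli}$ for $q>p$) does not interfere with the first-quadrant picture after reindexing. I do not expect this to pose a real obstacle; it amounts to standard manipulation once the grading convention is fixed.
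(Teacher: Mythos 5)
Your approach — applying the hyperhomology spectral sequence of the filtration of $\cF_p(S)$, starting from Theorem \ref{thm:hcrel} — is the same as the paper's. However, the step you dismiss as ``standard manipulation'' is in fact the only nontrivial part of the corollary, and your justification for it is wrong. The complex $\cF_p(S)$ is \emph{not} supported in degrees $0\le q\le p$: for $q>p$ it equals the full module $\Omega^q_{\elli}$, which is nonzero. After your reindexing $m=p-q$, these terms land in columns $m<0$, so the naive $E^1$ page is \emph{not} a priori concentrated in the first quadrant. Your remark that ``$\Omega^{p-m}_{\elli}=0$ when $m>p$'' only controls the other side (negative $q$), not this tail.

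What actually forces the first-quadrant shape is the vanishing $H_n(\Gamma/\cP,\Omega^q_{\elli})=0$ for all $n,q$. The paper deduces this from the fact that $\Gami$ is an infinite sum ring, which forces $HH_*(\Gami)=0$ and hence $HH^{(q)}_*(\Gami)=0$ for every $q$; combined with the Hodge identification $HH^{(q)}_*(\Gami)=H_{*-q}(\Gamma/\cP,\Omega^q_{\elli})$ (from \eqref{somehodgenc} and smoothness of $\elli$), this kills all columns with $q>p$. Without citing this infinite-sum-ring vanishing, the argument does not close: there is no formal ``bookkeeping'' reason that $\Gamma$-hyperhomology of the unbounded tail of $\cF_p(S)$ should be zero. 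You should replace the hand-waving by this precise vanishing statement.
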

\begin{proof}
This is the spectral sequence associated to
$\bH(\Gamma/\cP,\cF_{(p)}(S))$. It is located in the first quadrant
because as $\Gami$ is an infinite sum ring,
\[HH^{(q)}_*(\Gami)=H_{*+q}(\Gamma/\cP,\Omega^q_{\elli})=0.\]
\end{proof}

\begin{coro}\label{cor:milspec}
\[
HC_n^{(n)}(\Gami:I_S)=(S\Omega^n_{\elli}/d(S^2\Omega^{n-1}_{\elli}))_\cE.
\]
\end{coro}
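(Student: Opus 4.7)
The natural tool is the spectral sequence of Corollary \ref{cor:wodspec} specialized to the Hodge weight $p=n$:
\[
{}_nE^1_{m,j} = H_j(\Gamma/\cP, S^{m+1}\Omega^{n-m}_\elli) \Rightarrow HC^{(n)}_{m+j+n}(\Gami:I_S).
\]
The first observation is that, since the spectral sequence lies in the first quadrant, the only bidegree $(m,j)$ that contributes to the abutment in total degree $n$ (i.e.\ with $m+j+n=n$) is $(m,j)=(0,0)$. Therefore $HC^{(n)}_n(\Gami:I_S) = {}_nE^\infty_{0,0}$.

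Next I would analyze which differentials survive at the corner $(0,0)$. Outgoing differentials from $(0,0)$ land at $(-r,r-1)$, outside the first quadrant. Incoming differentials $d^r\colon {}_nE^r_{r,1-r}\to {}_nE^r_{0,0}$ vanish for $r\ge 2$ since the source sits below the $x$-axis. Hence only $d^1\colon {}_nE^1_{1,0}\to {}_nE^1_{0,0}$ can be nontrivial, and ${}_nE^\infty_{0,0} = \coker(d^1)$. By Proposition \ref{prop:ce=estar} the two relevant $E^1$-terms are
\[
{}_nE^1_{0,0} = (S\Omega^n_\elli)_\cE,\qquad {}_nE^1_{1,0} = (S^2\Omega^{n-1}_\elli)_\cE.
\]
The remaining step is to identify $d^1$ with the map induced by the de Rham differential $d\colon S^2\Omega^{n-1}_\elli\to S\Omega^n_\elli$. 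This falls out of the construction of the spectral sequence in Corollary \ref{cor:wodspec}: it arises from filtering $\cF_{(n)}(S)$ by cohomological degree, and the $d^1$ of the resulting hyperhomology spectral sequence is, as always, the boundary induced by the internal differential of the filtered complex, which here is the restriction of $d$. Since the $\emb$-action on $\elli$ is by algebra endomorphisms, $d$ is $\emb$-equivariant and descends to the $\cE$-coinvariants. Finally, because $(-)_\cE=H_0(\Gamma/\cP,-)$ is right exact and thus commutes with cokernels,
\[
\coker(d^1) = \bigl(S\Omega^n_\elli / d(S^2\Omega^{n-1}_\elli)\bigr)_\cE,
\]
as claimed.

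\textbf{Main obstacle.} Essentially no part of the argument requires work beyond tracing definitions: the conclusion is forced by the shape of the spectral sequence. The only point deserving care is the identification of the $d^1$-differential with the (coinvariants of the) de Rham differential. This is the standard behaviour of the hyperhomology spectral sequence associated to a filtered complex, and is already implicit in the proof of Corollary \ref{cor:wodspec}; one simply has to note that the relevant restriction of $d$ commutes with the $\emb$-action, so that passage to $(-)_\cE$ is allowed.
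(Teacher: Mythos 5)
Your proposal is correct and follows the same route as the paper: specialize Corollary~\ref{cor:wodspec} to $p=n$, observe that in total degree $n$ only the corner $(0,0)$ contributes, identify $E^\infty_{0,0}=E^2_{0,0}=\coker(d^1)$ with $d^1$ the map induced by the de Rham differential, and invoke right exactness of $(-)_\cE$. The paper's proof is just a compressed version of this computation, so your argument and the original are essentially identical.
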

\begin{proof} It follows from inspection of the second term of the spectral sequence of Corollary
\ref{cor:wodspec}, by using the fact that $H_0(\Gamma/\cP,-)=(\ \
)_\cE$ is right exact.
\end{proof}

\subsection{The cases \topdf{$S=\ell^p,\ell^{p\pm}$}{schatten}}

\begin{lem}\label{lem:moritegama}
Let $S\triqui\elli$ be a symmetric ideal. Then the map
\[
C(\Gamma/\cP,S\Omega^p_{\elli})\to
C(\Gamma(\N\sqcup\N)/\cP(\N\sqcup\N),S(\N\sqcup\N)\Omega^p_{\elli(\N\sqcup\N)})
\]
induced by the inclusion $\N\subset \N\sqcup\N$ into the first copy,
is a quasi-isomor- phism.
\end{lem}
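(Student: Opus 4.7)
The plan is to apply Lemma~\ref{lem:moritesuple} with $B=\Gamma$, $A=\cP$ and $N=S\Omega^p_\elli$, after identifying the target of the map in the statement with the $2\times 2$-matrix version of the source that appears in that lemma.

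First I would set up the block-matrix identifications. The bijection $(\N\sqcup\N)\times(\N\sqcup\N)\cong\coprod_{i,j\in\{1,2\}}\N\times\N$ yields a ring isomorphism $\Gamma(\N\sqcup\N)\cong M_2(\Gamma)$ under which $\cP(\N\sqcup\N)$, consisting of diagonal matrices with locally constant entries, corresponds exactly to the diagonally embedded $\cP\oplus\cP\subset M_2(\Gamma)$ of Lemma~\ref{lem:moritesuple}. The inclusion $\N\hookrightarrow\N\sqcup\N$ into the first copy then corresponds to the upper-left corner embedding $a\mapsto E_{11}aE_{11}$ of $\Gamma$ into $M_2(\Gamma)$.

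Second I would decompose the coefficient module. Writing $e_1=(1,0)$ and $e_2=(0,1)$ for the orthogonal central idempotents of $\elli(\N\sqcup\N)\cong\elli\times\elli$, Leibniz together with centrality gives $de_k=2e_k\,de_k$, and multiplying on the left by $e_k$ yields $e_k\,de_k=0$, so $de_k=0$ in $\Omega^1_{\elli(\N\sqcup\N)}$. Orthogonality and centrality of the $e_k$ then force the canonical DG-algebra decomposition $\Omega_{\elli(\N\sqcup\N)}\cong \Omega_\elli\oplus\Omega_\elli$, with the $k$-th summand equal to $e_k\Omega_{\elli(\N\sqcup\N)}$. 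Combined with $S(\N\sqcup\N)=S\oplus S$ this gives a canonical identification $S(\N\sqcup\N)\Omega^p_{\elli(\N\sqcup\N)}\cong(S\Omega^p_\elli)^{\oplus 2}$. A direct computation, based on decomposing any $f\in\emb(\N\sqcup\N)$ into its four partial restrictions between the two copies of $\N$ (and using the transpose involution on $\Gamma$ to handle the left-versus-right module distinction), shows that the resulting tight $\Gamma(\N\sqcup\N)$-module structure on this direct sum is exactly the matrix-multiplication $M_2(\Gamma)$-action on $N^{1\times 2}$ used in Lemma~\ref{lem:moritesuple}.

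With these identifications in place, the map in the statement of Lemma~\ref{lem:moritegama} becomes, on each tensor factor, multiplication by $E_{11}$, i.e.\ exactly the map $\iota$ of Lemma~\ref{lem:moritesuple}; it is therefore a quasi-isomorphism. The most delicate step is the decomposition of the K\"ahler differentials of $\elli\times\elli$ together with matching the two inverse-monoid actions; once this identification has been set up the conclusion reduces immediately to Lemma~\ref{lem:moritesuple}.
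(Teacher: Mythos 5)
Your proof is correct and follows essentially the same strategy as the paper's: both reduce the statement to Lemma~\ref{lem:moritesuple} after identifying the target complex as the $2\times 2$-matrix version of the source. The difference is in how the coefficient-module identification is set up. The paper first records that $S\otimes_{\elli}\Omega^p_{\elli}\to S\Omega^p_{\elli}$ is an isomorphism (using Corollary~\ref{coro:flatelli} and Example~\ref{exa:cxsmooth}) and then invokes the companion paper's Lemma~7.3.1 of~\cite{hl} to produce the required identification $S(\N\sqcup\N)\Omega^p_{\elli(\N\sqcup\N)}\cong N^{1\times 2}$ as an $M_2(\Gamma)$-module; you instead make this step self-contained by decomposing $\Omega_{\elli(\N\sqcup\N)}$ directly from the orthogonal idempotents $e_1,e_2$ of $\elli\times\elli$ (the Leibniz argument $de_k=2e_kde_k\Rightarrow e_kde_k=0\Rightarrow de_k=0$ is correct) and by decomposing each $f\in\emb(\N\sqcup\N)$ into its four block restrictions. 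This is a perfectly valid alternative: it avoids the appeal to the external lemma and, incidentally, also avoids needing the flatness isomorphism $S\otimes_{\elli}\Omega^p_{\elli}\cong S\Omega^p_{\elli}$, since the product-ring decomposition $S(\N\sqcup\N)\Omega^p_{\elli(\N\sqcup\N)}\cong S\Omega^p_{\elli}\oplus S\Omega^p_{\elli}$ is immediate once $\Omega_{\elli\times\elli}\cong\Omega_{\elli}\oplus\Omega_{\elli}$ is known. One thing worth spelling out if you were to write this up in full detail is the passage between the natural left tight $\emb$-action (via $f_*$) and the right $\Gamma$-module structure required by $\perp(\Gamma/\cP,-)$; you flag this (via the transpose involution $U_f\mapsto U_{f^\dagger}$), and it does work, but it is exactly the kind of bookkeeping the cited~\cite{hl} lemma packages up.
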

\begin{proof}
Recall from Corollary \ref{coro:flatelli} that every ideal of $\elli$ is flat, and from Example \ref{exa:cxsmooth} that $\Omega^p_{\elli}$ is a
flat $\elli$-module. It follows that the map
$S\otimes_{\elli}\Omega^p_{\elli}\to S\Omega^p_{\elli}$ is an
isomorphism for every ideal $S$. Now the proof is immediate from
\cite{hl}*{Lemma 7.3.1} and Lemma \ref{lem:moritesuple}.
\end{proof}

\begin{lem}\label{lem:vanishmap}
Let $0\ne S_1,S_2\subset\elli$ be symmetric ideals. Assume that $(S_1)_\cE=0$ and that the map
$\elli\otimes\elli\to \elli(\N\times\N)$ sends $S_1\otimes S_2\to S_2(\N\times\N)$. Then
$H_*(\Gamma/\cP,S_2\Omega^p_{\elli})=0$ $(p\ge 0)$.
\end{lem}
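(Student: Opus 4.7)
The plan is to adapt the strategy of Proposition \ref{prop:tensor} from Hochschild homology of crossed products to the hyperhomology $H_*(\Gamma/\cP,-)$. The first step is to invoke Lemma \ref{lem:moritegama}, which gives a quasi-isomorphism
\[
C(\Gamma/\cP,S_2\Omega^p_{\elli}) \weq C(\Gamma(\N\sqcup\N)/\cP(\N\sqcup\N),\, S_2(\N\sqcup\N)\Omega^p_{\elli(\N\sqcup\N)}),
\]
so it is enough to prove vanishing of the right hand side. Moving to $\N\sqcup\N$ is what will let us exploit a two-factor structure.

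Next, identifying $\N\sqcup\N\cong \N\times\{1,2\}\subset \N\times\N$, I would build a commutative diagram of $\Gamma$-equivariant maps, in the spirit of the diagram with $M_2$ used in the proof of Proposition \ref{prop:tensor}:
\[
\xymatrix{
S_1\otimes_\Z S_2\Omega^p_{\elli}\ar[r] & S_2(\N\sqcup\N)\Omega^p_{\elli(\N\sqcup\N)}\\
S_2\Omega^p_{\elli}\ar[u]^{\xi\otimes-}\ar[ur] &
}
\]
Here $\xi\in S_1$ is a fixed nonzero element (available since $S_1\neq 0$), the horizontal map uses hypothesis (ii) together with flatness of $\Omega^p_{\elli}$ over $\elli$ from Example \ref{exa:cxsmooth}, and the diagonal map is an $E_{1,1}$-style corner inclusion into one copy of $\N$. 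By Lemma \ref{lem:moritegama}, the diagonal arrow induces a quasi-isomorphism after applying $C(\Gamma/\cP,-)$; thus in hyperhomology the identity on $H_*(\Gamma/\cP,S_2\Omega^p_{\elli})$ is forced to factor through $H_*(\Gamma/\cP,\, S_1\otimes_\Z S_2\Omega^p_{\elli})$.

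The last step is to show that this latter group vanishes, using the hypothesis $(S_1)_\cE=0$. By Proposition \ref{prop:ce=estar}, $(S_1)_\cE=H_0(\Gamma/\cP,S_1)=0$. One then runs an induction on the homological degree $n$: at $n=0$ the relevant term is $(S_1\otimes_\Z S_2\Omega^p_{\elli})_\cE$, which vanishes because the span that we quotient by already absorbs $S_1$ into zero; the inductive step is fed by a Künneth-type spectral sequence for the hyperhomology of $\Gamma/\cP$ on a $\Z$-tensor product of $\Gamma$-modules, using the flatness of $\Gamma$ over $\cP$ from Proposition \ref{prop:gamaflatp} together with the flatness of $\Gamma$ and $\cP$ as $\Z$-modules.

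The main obstacle will be making the Künneth/spectral sequence step precise, namely setting up the spectral sequence that expresses $H_*(\Gamma/\cP, S_1\otimes_\Z S_2\Omega^p_{\elli})$ in terms of $H_*(\Gamma/\cP,S_1)$ and $H_*(\Gamma/\cP,S_2\Omega^p_{\elli})$ so that the induction closes; this is the analog, in the hyperhomology setting, of the Künneth plus Morita step that closed the inductive argument in Proposition \ref{prop:tensor}. All the remaining ingredients (Lemma \ref{lem:moritegama}, Proposition \ref{prop:ce=estar}, Proposition \ref{prop:gamaflatp}, and the factorization diagram above) are straightforward.
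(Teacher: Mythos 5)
Your plan matches the paper's intended argument: the proof is exactly ``Lemma~\ref{lem:moritegama} plus the argument of Proposition~\ref{prop:tensor}'', i.e.\ reduce via the Morita lemma, factor the corner inclusion through a tensor with $S_1$, and close with K\"unneth and induction on degree, seeded by $(S_1)_\cE=0$. You have correctly identified the K\"unneth/Eilenberg--Zilber step as the place where real work remains, and the tools you list (flatness of $\Gamma$ over $\cP$, flatness over $\Z$, Proposition~\ref{prop:ce=estar}) are the right ones; since one may work over $\C$ (the value of $H_*(\Gamma/\cP,-)$ is unchanged by extending $\Gamma,\cP$ to $\Gamma(\C),\cP(\C)$), the K\"unneth formula has no $\tor$ correction.

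Two small precision issues in your diagram. First, the vertical arrow cannot be $\xi\otimes-$ for a generic nonzero $\xi\in S_1$: for the triangle to commute with the corner inclusion $S_2\Omega^p_{\elli}\to S_2(\N\sqcup\N)\Omega^p_{\elli(\N\sqcup\N)}$ you need $\xi$ to be the characteristic function $\chi_{\{1\}}$ (or at least to have $\xi_1\ne 0$). Fortunately $\chi_{\{1\}}\in S_1$ automatically: $S_1$ is a nonzero symmetric ideal, hence a $\C$-subspace stable under $\emb$, so it contains all finitely supported sequences. Second, $S_1\otimes_\Z S_2\Omega^p_{\elli}$ is naturally a $\Gamma\otimes\Gamma$-module, not a $\Gamma$-module, so the group your identity is forced to factor through is $H_*(\Gamma\otimes\Gamma/\cP\otimes\cP,\,S_1\otimes S_2\Omega^p_{\elli})$; the vertical arrow is then a corner map along the nonunital ring homomorphism $\Gamma\to\Gamma\otimes\Gamma$, $U_f\mapsto \chi_{\{1\}}\otimes U_f$, parallel to $E_{1,1}\otimes-$ in Proposition~\ref{prop:tensor}. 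With those corrections the K\"unneth step is clean: $\perp_n(\Gamma\otimes\Gamma/\cP\otimes\cP,\,M_1\otimes M_2)=\perp_n(\Gamma/\cP,M_1)\otimes\perp_n(\Gamma/\cP,M_2)$ level-wise, so Eilenberg--Zilber gives $H_*(\Gamma\otimes\Gamma/\cP\otimes\cP,\,M_1\otimes M_2)\cong H_*(\Gamma/\cP,M_1)\otimes H_*(\Gamma/\cP,M_2)$, and the induction closes exactly as in Proposition~\ref{prop:tensor}.
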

\begin{proof}
The proof follows using Lemma \ref{lem:moritegama} and the argument of the proof of Proposition \ref{prop:tensor}.
\end{proof}

Let $p\in\R$; the following notation is used in the proposition below.
\[
[p]=\max\{n\in\Z:n\le p\},\ \ \lf p\rf=\left\{\begin{matrix} p-1 & p\in\Z\\ [p] &
p\notin\Z.\end{matrix}\right.
\]

\begin{prop}\label{prop:compu}
\item[i)] Let $p>0$ and let $S_p$ be either $\ell^p$ or $\ell^{p-}$. Then
\begin{multline*}
HC_n^{(q)}(\Gami:I_{S_p})=\\
\left\{\begin{matrix}0 & n<q+\lf p\rf\\
(S_{(p/(\lf p\rf+1))}\Omega^{q-\lf p\rf}_{\elli}/d(S_{(p/(\lf p\rf+2))}\Omega^{q-\lf p\rf-1}_{\elli}))_\cE & n=q+\lf
p\rf.\end{matrix}\right.
\end{multline*}
In particular, the first nonzero group is
\[
HC_{2\lf p\rf}(\Gami:I_{S_p})=HC_{2\lf p\rf}^{\lf
p\rf}(\Gami:I_{S_p})=HC_0(\Gami:I_{S_{p/(\lf p\rf+1)}})
\]
which was computed in \ref{lem:previo}.
\item[ii)]
\begin{multline*}
HC_n^{(q)}(\Gami:I_{\ell^{p+}})=\\ \left\{\begin{matrix}0 & n<q+[p]\\
(\ell^{(p/([p]+1))+}\Omega^{q-[p]}_{\elli}/d(\ell^{(p/([p]+2))+}\Omega^{q-[p]-1}_{\elli}))_\cE& n=q+[p].
\end{matrix}\right.
\end{multline*}
In particular, the first nonzero group is
\[
HC_{2[p]}(\Gami:I_{\ell^{p+}})=HC_{2[p]}^{([p])}(\Gami:I_{\ell^{p+}})=HC_0(\Gami:I_{\ell^{(p/([p]+1))^+}})=\C
\]
\end{prop}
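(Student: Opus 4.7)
The plan is to deduce both parts from the Wodzicki-type spectral sequence of Corollary \ref{cor:wodspec},
$${}_qE^1_{m,n}=H_n(\Gamma/\cP,S^{m+1}\Omega^{q-m}_{\elli})\Rightarrow HC_{m+n+q}^{(q)}(\Gami:I_S),$$
whose $d_1$-differential is induced by the de Rham differential (coming from the differential of the complex $\cF_p(S)$ used in Theorem \ref{thm:hcrel}). First I would compute the ideal powers: $(\ell^p)^{m+1}=\ell^{p/(m+1)}$, $(\ell^{p-})^{m+1}=\ell^{p/(m+1)-}$, and $(\ell^{p+})^{m+1}=\ell^{p/(m+1)+}$ (the last via the factorization $\gamma=(|\gamma|^{1/(m+1)}\mathrm{sign}(\gamma))\cdot|\gamma|^{1/(m+1)}\cdots|\gamma|^{1/(m+1)}$, which writes any $\gamma\in\ell^{p/(m+1)+}$ as an $(m+1)$-fold product of elements of $\ell^{p+}$). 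Next I would locate two vanishing regions for the $E^1$-page: trivially, ${}_qE^1_{m,n}=0$ when $m>q$ since $\Omega^{q-m}_{\elli}=0$; and, via Lemma \ref{lem:vanishmap} applied with $S_1=S_2=S_p^{m+1}$, when $m<\lf p\rf$ in cases (i), respectively $m<[p]$ in case (ii). The required map $S_p^{m+1}\otimes S_p^{m+1}\to S_p^{m+1}(\N\times\N)$ follows from the pointwise observation that $\alpha\in\ell^r$ and $\beta\in\ell^s$ imply $\alpha\boxtimes\beta\in\ell^{\max(r,s)}(\N\times\N)$, while the hypothesis $(S_p^{m+1})_\cE=0$ is equivalent (by Proposition \ref{prop:citakenetal} in cases (i) and Lemma \ref{lem:previo} in case (ii)) to $p/(m+1)>1$ and $p/(m+1)\ge 1$ respectively, each of which recovers the displayed range of $m$.

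These two vanishing criteria together show that ${}_qE^1_{m,n}$ is zero outside the strip $\lf p\rf\le m\le q$ (respectively $[p]\le m\le q$). Consequently $HC_n^{(q)}(\Gami:I_{S_p})=0$ whenever no lattice point $(m,n')$ with $m+n'+q=n$ and $n'\ge 0$ meets this strip: this takes care of both $q<\lf p\rf$ (the entire Hodge component vanishes, which is needed for the ``first nonzero'' claims) and $n<q+\lf p\rf$ (the bound $m\ge\lf p\rf$ combined with $m+n'<\lf p\rf$ forces $n'<0$), yielding the first line of the formula in each part.

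For the critical total degree $n=q+\lf p\rf$ (respectively $q+[p]$), the only position meeting the nonvanishing strip is $(m,n')=(\lf p\rf,0)$ (respectively $([p],0)$). Outgoing $d_r$ target columns $m-r<\lf p\rf$, which vanish; incoming $d_r$ with $r\ge 2$ would originate in row $n'=1-r<0$, which is also zero; only the $d_1$ coming from $(\lf p\rf+1,0)$ survives. Right-exactness of $H_0(\Gamma/\cP,-)=(-)_\cE$ together with the identification $d_1=d$ then gives
$${}_qE^\infty_{\lf p\rf,0}=\bigl(S_p^{\lf p\rf+1}\Omega^{q-\lf p\rf}_{\elli}\,\bigm/\,d(S_p^{\lf p\rf+2}\Omega^{q-\lf p\rf-1}_{\elli})\bigr)_\cE,$$
which is the claimed formula after the substitution $S_p^{k}=S_{p/k}$. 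The ``in particular'' statements follow by setting $q=\lf p\rf$ (respectively $[p]$): then $\Omega^{-1}=0$ kills the denominator, the expression collapses to $(S_p^{\lf p\rf+1})_\cE=HC_0(\Gami:I_{S_{p/(\lf p\rf+1)}})$ via Corollaries \ref{cor:hh0} and \ref{coro:hc0}, and the value is read off from Lemma \ref{lem:previo}.

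The principal obstacle will be the index bookkeeping across the two regimes $p\in\Z$ and $p\notin\Z$ (encoded by $\lf\cdot\rf$ vs.\ $[\cdot]$), and verifying carefully that the $d_1$ of the Wodzicki spectral sequence acts on the $E^1$-page as the de Rham differential inherited from the filtration $\cF_p(S)$ of Theorem \ref{thm:hcrel}.
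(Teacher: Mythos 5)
Your proof is correct and takes essentially the same route the paper indicates. The paper's own argument is a one-line citation of Corollary \ref{cor:wodspec}, Lemma \ref{lem:vanishmap}, and Proposition \ref{prop:citakenetal}; you fill in exactly the bookkeeping these references leave implicit — the ideal-power computation $S_p^{m+1}=S_{p/(m+1)}$, the two vanishing regions of the $E^1$-page, the identification of the lone surviving position at the critical total degree, and the cokernel computation via right-exactness of $(-)_\cE$. The concern you flag at the end (that $d_1$ acts as the de Rham differential) is not a genuine obstacle: Corollary \ref{cor:wodspec} is by construction the hyperhomology spectral sequence of the complex $\cF_p(S)\subset\Omega_{\elli}$, whose differential is $d$.
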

\begin{proof} This is a straightforward application of the spectral sequence of Corollary \ref{cor:wodspec} together with Lemma
\ref{lem:vanishmap} and Proposition \ref{prop:citakenetal}.
\end{proof}

\begin{bibdiv}
\begin{biblist}

\bib{hl}{article}{
author={Abadie, Beatriz},
author={Corti\~nas, Guillermo},
title={Homotopy invariance through small stabilizations},
eprint={arXiv:1212.5901},
}
\bib{busex}{article}{
   author={Buss, Alcides},
   author={Exel, Ruy},
   title={Fell bundles over inverse semigroups and twisted \'etale
   groupoids},
   journal={J. Operator Theory},
   volume={67},
   date={2012},
   number={1},
   pages={153--205},
}

\bib{calk}{article}{
   author={Calkin, J. W.},
   title={Two-sided ideals and congruences in the ring of bounded operators
   in Hilbert space},
   journal={Ann. of Math. (2)},
   volume={42},
   date={1941},
   pages={839--873},
}
\bib{carteil}{book}{
   author={Cartan, Henri},
   author={Eilenberg, Samuel},
   title={Homological algebra},
   publisher={Princeton University Press},
   place={Princeton, N. J.},
   date={1956},
   pages={xv+390},
}
\bib{cohn}{article}{
   author={Cohn, P. M.},
   title={Some remarks on the invariant basis property},
   journal={Topology},
   volume={5},
   date={1966},
   pages={215--228},
}

\comment{
\bib{kabi}{article}{
   author={Corti{\~n}as, Guillermo},
   title={The obstruction to excision in $K$-theory and in cyclic homology},
   journal={Invent. Math.},
   volume={164},
   date={2006},
   number={1},
   pages={143--173},
}
}

\bib{friendly}{article}{
   author={Corti{\~n}as, Guillermo},
   title={Algebraic v. topological $K$-theory: a friendly match},
   conference={
      title={Topics in algebraic and topological $K$-theory},
   },
   book={
      series={Lecture Notes in Math.},
      volume={2008},
      publisher={Springer},
      place={Berlin},
   },
   date={2011},
   pages={103--165},
}
\comment{
\bib{corel}{article}{
author={Corti{\~n}as, Guillermo},
author={Ellis, Eugenia},
title={Isomorphism conjectures with proper coefficients},
eprint={arXiv:1108.5196v3},
}
}
\bib{cgg}{article}{
author={Corti{\~n}as, Guillermo},
 author={Guccione, Jorge Alberto},
author={Guccione, Juan Jos\'e}, title={Decomposition of Hochschild
and cyclic homology of commutative differential graded algebras},
journal={ J.of Pure and Appl. Alg.},
volume={83},date={1992},pages={219--235},
}

\bib{biva}{article}{
   author={Corti{\~n}as, Guillermo},
   author={Thom, Andreas},
   title={Bivariant algebraic $K$-theory},
   journal={J. Reine Angew. Math.},
   volume={610},
   date={2007},
   pages={71--123},
}

\bib{cot}{article}{
   author={Corti{\~n}as, Guillermo},
   author={Thom, Andreas},
   title={Comparison between algebraic and topological $K$-theory of locally
   convex algebras},
   journal={Adv. Math.},
   volume={218},
   date={2008},
   number={1},
   pages={266--307},
}

\bib{galgtop}{article}{
   author={Corti{\~n}as, Guillermo},
   author={Thom, Andreas},
   title={Algebraic geometry of topological spaces I},
   journal={Acta Math.},
   volume={209},
   date={2012},
   number={1},
   pages={83--131},
   doi={10.1007/s11511-012-0082-6},
}

\bib{agree}{article}{
   author={Corti{\~n}as, G.},
   author={Weibel, C.},
   title={Relative Chern characters for nilpotent ideals},
   conference={
      title={Algebraic topology},
   },
   book={
      series={Abel Symp.},
      volume={4},
      publisher={Springer},
      place={Berlin},
   },
   date={2009},
   pages={61--82},
}

\bib{cmr}{book}{
   author={Cuntz, Joachim},
   author={Meyer, Ralf},
   author={Rosenberg, Jonathan M.},
   title={Topological and bivariant $K$-theory},
   series={Oberwolfach Seminars},
   volume={36},
   publisher={Birkh\"auser Verlag},
   place={Basel},
   date={2007},
   pages={xii+262},
}
\comment{
\bib{cq}{article}{
   author={Cuntz, Joachim},
   author={Quillen, Daniel},
   title={Excision in bivariant periodic cyclic cohomology},
   journal={Invent. Math.},
   volume={127},
   date={1997},
   number={1},
   pages={67--98},
}

}
\bib{kenetal2}{article}{
   author={Dykema, Ken},
   author={Figiel, Tadeusz},
   author={Weiss, Gary},
   author={Wodzicki, Mariusz},
   title={Commutator structure of operator ideals},
   journal={Adv. Math.},
   volume={185},
   date={2004},
   number={1},
   pages={1--79},
}

\bib{ruy}{article}{
   author={Exel, Ruy},
   title={Inverse semigroups and combinatorial $C\sp \ast$-algebras},
   journal={Bull. Braz. Math. Soc. (N.S.)},
   volume={39},
   date={2008},
   number={2},
   pages={191--313},
}

\bib{garling}{article}{
   author={Garling, D. J. H.},
   title={On ideals of operators in Hilbert space},
   journal={Proc. London Math. Soc. (3)},
   volume={17},
   date={1967},
   pages={115--138},
}
\comment{
\bib{hig}{article}{
   author={Higson, Nigel},
   title={Algebraic $K$-theory of stable $C\sp *$-algebras},
   journal={Adv. in Math.},
   volume={67},
   date={1988},
   number={1},
   pages={140},
}

\bib{kv}{article}{
   author={Karoubi, Max},
   author={Villamayor, Orlando},
   title={$K$-th\'eorie alg\'ebrique et $K$-th\'eorie topologique. I},
   journal={Math. Scand.},
   volume={28},
   date={1971},
   pages={265--307 (1972)},
}
}

\bib{khal}{article}{
   author={Khalkhali, M.},
   author={Rangipour, B.},
   title={On the generalized cyclic Eilenberg-Zilber theorem},
   journal={Canad. Math. Bull.},
   volume={47},
   date={2004},
   number={1},
   pages={38--48},
}

\bib{lod}{book}{
   author={Loday, Jean-Louis},
   title={Cyclic homology},
   series={Grundlehren der Mathematischen Wissenschaften},
   volume={301},
   edition={2},
   publisher={Springer-Verlag},
   place={Berlin},
   date={1998},
   pages={xx+513},
}
\bib{hmod}{article}{
   author={L\"uck, Wolfgang},
   title={Hilbert modules and modules over finite von Neumann algebras and applications to $L^2$-invariants},
   journal={Math. Ann},
   volume={309},
   date={1997},
   pages={247--285},
}

\bib{qui}{article}{
   author={Quillen, Daniel},
   title={Cyclic cohomology and algebra extensions},
   journal={$K$-Theory},
   volume={3},
   date={1989},
   number={3},
   pages={205--246},
}

\comment{
\bib{roshan}{article}{
   author={Rosenberg, Jonathan},
   title={Comparison between algebraic and topological $K$-theory for Banach
   algebras and $C\sp *$-algebras},
   conference={
      title={Handbook of $K$-theory. Vol. 1, 2},
   },
   book={
      publisher={Springer},
      place={Berlin},
   },
   date={2005},
   pages={843--874},
}

\bib{simon}{book}{
   author={Simon, Barry },
   title={Trace Ideals and their Applications: Second Edition},
   series={Mathematical Surveys and Monographs},
   volume={120},
   publisher={AMS},
   date={2005},
}
}

\bib{sw1}{article}{
   author={Suslin, Andrei A.},
   author={Wodzicki, Mariusz},
   title={Excision in algebraic $K$-theory},
   journal={Ann. of Math. (2)},
   volume={136},
   date={1992},
   number={1},
   pages={51--122},
}
\bib{vigue}{article}{
   author={Vigu{\'e}-Poirrier, Micheline},
   title={D\'ecompositions de l'homologie cyclique des alg\`ebres
   dif- \'erentielles gradu\'ees commutatives},
   journal={$K$-Theory},
   volume={4},
   date={1991},
   number={5},
   pages={399--410},
}
\comment{
\bib{wag}{article}{
   author={Wagoner, J. B.},
   title={Delooping classifying spaces in algebraic $K$-theory},
   journal={Topology},
   volume={11},
   date={1972},
   pages={349--370},
}
}

\bib{chubu}{book}{
   author={Weibel, Charles A.},
   title={An introduction to homological algebra},
   series={Cambridge Studies in Advanced Mathematics},
   volume={38},
   publisher={Cambridge University Press},
   place={Cambridge},
   date={1994},
   pages={xiv+450},
}
\comment{
\bib{kh}{article}{
   author={Weibel, Charles A.},
   title={Homotopy algebraic $K$-theory},
   conference={
      title={Algebraic $K$-theory and algebraic number theory (Honolulu, HI,
      1987)},
   },
   book={
      series={Contemp. Math.},
      volume={83},
      publisher={Amer. Math. Soc.},
      place={Providence, RI},
   },
   date={1989},
   pages={461--488},
}
}
\bib{wodex}{article}{
   author={Wodzicki, Mariusz},
   title={Excision in cyclic homology and in rational algebraic $K$-theory},
   journal={Ann. of Math. (2)},
   volume={129},
   date={1989},
   number={3},
   pages={591--639},
}
\bib{wodk}{article}{
   author={Wodzicki, Mariusz},
   title={Algebraic $K$-theory and functional analysis},
   conference={
      title={First European Congress of Mathematics, Vol.\ II},
      address={Paris},
      date={1992},
   },
   book={
      series={Progr. Math.},
      volume={120},
      publisher={Birkh\"auser},
      place={Basel},
   },
   date={1994},
   pages={485--496},
}

\end{biblist}
\end{bibdiv}
\end{document}